\newtheorem{theorem}{Theorem}[section]
\newtheorem{lemma}[theorem]{Lemma}
\newtheorem*{lemma*}{Lemma}
\newtheorem*{definition*}{Definition}
\newtheorem{definition}[theorem]{Definition}
\newtheorem{corollary}[theorem]{Corollary}
\theoremstyle{definition}
\numberwithin{equation}{section}
\let\inf\relax \DeclareMathOperator*\inf{\vphantom{p}inf}
\newcommand{\leqn}{\begin{equation}\label}
\def\endeqn{\end{equation}}
      \def\RR{\mathbb{R}}
\newcommand{\res}{\hbox{{\vrule height .22cm}{\leaders\hrule\hskip .2cm}}}
\renewcommand{\@makefnmark}{\mbox{\textsuperscript{}}}
\def\adots{\mathinner{\mkern2mu\raise0pt\hbox{.}  % antidiagonal dots
\mkern2mu\raise4pt\hbox{.}\mkern1mu
\raise7pt\vbox{\kern7pt\hbox{.}}\mkern1mu}}
\def\res{\hbox{ {\vrule height .22cm}{\leaders\hrule\hskip.2cm} } }
\begin{document}
\title{\bf{Uniformly distributed measures have Big Pieces of Lipschitz Graphs locally}}
\author{ A. Dali Nimer}
\date{}

\maketitle\footnote{The author was partially supported by NSF RTG 0838212, DMS-1361823 and DMS-0856687}
\footnote{Department of Mathematics, University of Chicago, 5734, S. University Ave., Chicago, IL, 60637

E-mail address: nimer@uchicago.edu}
\footnote{Mathematics Subject Classification Primary 28A33, 49Q15}
\begin{abstract}
The study of uniformly distributed measures was crucial in Preiss' proof of his theorem on rectifiability of measures with positive density. It is known that the support of a uniformly distributed measure is an analytic variety. In this paper, we provide quantitative information on the rectifiability of this variety. Tolsa had already shown that $n$-uniform measures have big pieces of Lipschitz graph(BPLG). Here, we prove that a uniformly distributed measure has BPLG locally.
\end{abstract}
\section{Introduction}
Understanding the geometry of uniformly distributed measures has been an important question in geometric measure theory ever since Preiss proved his remarkable theorem on the $n$-rectifiability of measures in $\cite{P}$. This theorem states that, given a Radon measure $\sigma$ in $\mathbb{R}^{d}$, if the $n$-density of $\sigma$ 
\begin{equation}\nonumber \Theta^{n}(x,\sigma)=\lim_{r \to 0} \frac{\sigma(B(x,r))}{r^n}  \end{equation}
exists, is finite and positive $\sigma$-almost everywhere on $\mathbb{R}^{d}$, then there exists a countably $n$-rectifiable set $E$ such that $\sigma(\mathbb{R}^d \backslash E)=0$.
The proof of Preiss' theorem relied heavily on the study of uniformly distributed measures. Indeed, these measures appear as blow ups (zoom-ins) and blow downs (zoom-outs) of measures with positive finite density.
We say a Radon measure $\mu$ in $\mathbb{R}^{d}$ is uniformly distributed if there exists a positive function $\phi: \mathbb{R}_{+} \to \mathbb{R}_{+}$, called its distribution function, such that:
\begin{equation}\nonumber
\mu(B(x,r))=\phi(r), \mbox{ for all } x \in \mbox{supp }\mu, \mbox{ for all } r>0.
\end{equation}
An example of note is when the function $\phi$ is $c r^{n}$ for some $c>0$, $n\leq d$. These are called $n$-uniform measures and appear in many different contexts from geometric measure theory to harmonic analysis and PDE's (for instance in $\cite{KT}$, $\cite{DKT}$, $\cite{PTT}$).

The geometry of the supports of uniformly distributed measures is not very well understood.
Let us start by stating some known facts.
As a direct consequence of Preiss' theorem, we can deduce that the support of an $n$-uniform measure is countably $n$-rectifiable. In fact, the same can be said of uniformly distributed measures. Indeed, Preiss proved in $\cite{P}$ that uniformly distributed measures ``look like'' $n$-uniform measures on small and on large scales. Their $n$-rectifiability can easily be deduced from that fact. 

One might expect much more regularity than rectifiability, given the fact that the property of being uniformly distributed is a global one (i.e. it is a property for all $r>0$). This turns out to be the case.
For $n$-uniform measures, a classification is available in some cases.
 In $\cite{P}$, Preiss provides a classification of the cases $n=1,2$ in $\mathbb{R}^{d}$ for any $d$ . In these cases, $\mu$ is Hausdorff measure restricted to a line or a plane respectively.
In $\cite{KoP}$, Kowalski and Preiss proved that $\mu$ is ($d-1$)-uniform in $\mathbb{R}^d$ if and only if $\mu=\mathcal{H}^{d-1} \res V$
where $V$ is a ($d-1$)-plane, or $d \geq 4$ and there exists an orthonormal system of coordinates
in which $\mu= \mathcal{H}^{d-1} \res (C \times W)$ where $W$ is a ($d-4$)-plane and $C$ is the KP-cone (Kowalski-Preiss cone)
\begin{equation}\label{KPcone}
C =\{(x_1,x_2,x_3,x_4); x_4^2=x_1^2+x_2^2+x_3^2 \}.
\end{equation}
The classification for $n \geq 3$ and codimension $\geq 2$ remains an open question.

On the other hand, in $\cite{KiP}$, Kirchheim and Preiss proved that the support of a uniformly distributed measure is an analytic variety, that is the intersection of zero sets of analytic functions. More precisely:

\begin{theorem}[\cite{KiP}]\label{analyticvar}
Let $\mu$ be a uniformly distributed measure over $\RR^d$ and let $u \in \Sigma$ where $\Sigma =\text{supp}\mu$. For every $x \in \RR^d$ and $s>0$ let \begin{equation}
F(x,s)=\int_{\RR^d} e^{-s|z-x|^2} - e^{-s|z-u|^2} d\mu(z)
\end{equation}
Then:
\begin{itemize}
\item $F(x,s)$ is well-defined and finite for any $x \in \RR^d$ and any $s >0$; moreover its definition is independent of the choice of $u \in \Sigma$
\item $\Sigma = {\bigcap}_{s>0} \left\lbrace x \in {\RR}^{d} ;F(x,s)=0 \right\rbrace $ 
\end{itemize}
\end{theorem}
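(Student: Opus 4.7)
The plan is to reduce everything to the layer-cake identity
\[
\int_{\RR^d} e^{-s|z-y|^2}\, d\mu(z) \;=\; \int_0^\infty 2sr\, e^{-sr^2}\, \mu(B(y,r))\, dr, \qquad y \in \RR^d,\ s>0,
\]
and then invoke uniqueness of the Laplace transform. The one quantitative input I need first is a polynomial bound $\phi(r) \leq C(1+r)^d$. For $u \in \Sigma$ and $r \geq 1$, cover $B(u,r)$ by at most $C_d r^d$ balls of radius $1/2$; each ball that meets $\Sigma$ lies inside a ball of radius $1$ centered at some point of $\Sigma$, whose $\mu$-mass is $\phi(1)$. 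Since $\mu$ is supported on $\Sigma$, this yields $\phi(r) \leq C_d\, \phi(1)\, r^d$ for $r \geq 1$; for $r \leq 1$, $\phi(r) \leq \phi(1)$ by monotonicity.

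Write $I(y,s) := \int e^{-s|z-y|^2}\, d\mu(z)$. Combining the layer-cake identity with the inclusion $B(y,r) \subseteq B(u, r + d(y,\Sigma))$ gives $\mu(B(y,r)) \leq \phi(r + d(y,\Sigma)) \leq C(1 + r + d(y,\Sigma))^d$, and the Gaussian factor $e^{-sr^2}$ makes $I(y,s)$ finite for every $s>0$. For $y = u \in \Sigma$ the formula specializes to $I(u,s) = \int_0^\infty 2sr\, e^{-sr^2}\, \phi(r)\, dr$, which depends only on $\phi$ and $s$; this gives the independence statement in the first bullet, and shows that $F$ is a well-defined finite function on $\RR^d \times (0,\infty)$. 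The inclusion $\Sigma \subseteq \bigcap_{s>0}\{F(\cdot,s)=0\}$ is then immediate: for $x \in \Sigma$ the same layer-cake identity gives $I(x,s) = I(u,s)$, so $F(x,s)=0$.

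For the reverse inclusion, suppose $F(x,s) = 0$ for every $s > 0$, i.e.\ $I(x,s) = I(u,s)$ for every $s > 0$. Substituting $t = r^2$ rewrites the equality as
\[
\int_0^\infty e^{-st}\, \mu(B(x, \sqrt t))\, dt \;=\; \int_0^\infty e^{-st}\, \phi(\sqrt t)\, dt, \qquad \forall\, s > 0,
\]
so the Laplace transforms of the two non-decreasing, left-continuous functions $t \mapsto \mu(B(x,\sqrt t))$ and $t \mapsto \phi(\sqrt t)$ coincide on $(0,\infty)$. By uniqueness of the Laplace transform these functions agree everywhere, so $\mu(B(x,r)) = \phi(r)$ for every $r > 0$. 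Since $u \in \supp\mu$ forces $\phi(r) > 0$ for every $r > 0$, every open ball around $x$ carries positive $\mu$-mass, which means $x \in \supp\mu = \Sigma$. The main obstacle is really the polynomial growth bound on $\phi$; once it is secured, the remaining steps are routine integral manipulation and a standard Laplace-uniqueness argument.
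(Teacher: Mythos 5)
Your proof is correct. The paper does not prove this theorem itself (it is quoted from Kirchheim--Preiss \cite{KiP}), but your argument is in fact essentially the original one: use the polynomial growth bound on $\phi$ (which is the content of Lemma~1.1 of \cite{KiP}, restated here as Theorem~\ref{doublingKiP}) to make $I(y,s)$ finite and to justify Fubini, rewrite $I(y,s)$ via the layer-cake formula as a weighted integral of $r\mapsto\mu(B(y,r))$, observe that for $y\in\Sigma$ this depends only on $\phi$ (giving well-definedness, independence of $u$, and the inclusion $\Sigma\subseteq\bigcap_s\{F=0\}$), and then run the converse through uniqueness of the Laplace transform to force $\mu(B(x,r))=\phi(r)$ for all $r$, hence $x\in\Sigma$. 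The only point worth a sentence more of care is that Laplace uniqueness gives $\mu(B(x,\sqrt t))=\phi(\sqrt t)$ only for a.e.\ $t$ at first; your appeal to monotonicity and one-sided continuity of $r\mapsto\mu(B(x,r))$ to upgrade this to equality for every $r$ is the right fix and closes the gap. Everything checks out.
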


It is a known fact that an analytic variety of dimension $n$ is a finite union of analytic $n$-submanifolds up to a set of $\mathcal{H}^{n}$-measure 0. This confirms the expectation of regularity but has the disadvantage of not providing any quantitative information on the regularity of the support.

Let us now turn  to uniform rectifiability. This notion was introduced by David and Semmes (see for example $\cite{DS2}$). 
It is a quantitative version of the notion of $n$-rectifiability. One possible definition of it is the following.

Let $\mu$ be a locally finite Radon measure in $\mathbb{R}^d$, and $\Sigma$ its support, $0 \in \Sigma$.
 We say that an Ahlfors $n$-regular measure$\mu$  has big pieces of Lipschitz graphs (BPLG) if there exist constants $\theta$ and $M$ so that,
 for each $x \in \Sigma$ and $R>0$, there is a Lipschitz function $g$ from $\mathbb{R}^n$ to $\mathbb{R}^{d-n}$ such that $g$ has Lipschitz norm not exceeding $M$ and such that its graph $\Gamma$ (up to rotation) satisfies:
$$\mu(B(x,R) \cap \Gamma)\geq \theta R^n.$$
We say that $\mu$ has BPLG locally if given $K$ a compact set of $\mathbb{R}^{d}$ then for every $x \in \Sigma \cap K$ and every $0<R<diam(K)$, 
there is a Lipschitz function $g$ from $ \mathbb{R}^n$ to $\mathbb{R}^{d-n}$ such that $g$ has Lipschitz norm not exceeding $M$ and such that its graph $\Gamma$ (up to rotation) satisfies:
$$\mu(B(x,R) \cap \Gamma)\geq \theta R^n.$$

In $\cite{T}$, Tolsa proved that $n$-uniform measures have BPLG.

\begin{theorem}\cite{T} \label{Tolsa}
Let $\mu$ be an $n$-uniform measure in $\RR^{d}$. Then $\mu$ has big pieces of Lipschitz graphs.
\end{theorem}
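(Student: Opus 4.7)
My plan is to establish the two defining conditions of uniform $n$-rectifiability separately. Ahlfors $n$-regularity is built directly into the definition of $n$-uniformity, since $\mu(B(x,r)) = c r^n$ holds with equality for every $x \in \Sigma$ and $r > 0$. What remains is a quantitative flatness statement, for which I would aim to verify the David-Semmes $\beta$-number Carleson estimate; together with Ahlfors regularity, this is known to imply the big-pieces-of-Lipschitz-images condition.

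Concretely, I would prove
\begin{equation}\nonumber
\int_0^R \int_{B(x_0, R) \cap \Sigma} \beta_{2,\mu}(x, r)^2 \, d\mu(x) \, \frac{dr}{r} \lesssim R^n
\end{equation}
for every $x_0 \in \Sigma$ and $R > 0$, where $\beta_{2,\mu}(x,r)$ is the usual $L^2$-$\beta$ number measuring the mean square distance in $B(x,r)$ to the closest affine $n$-plane. The main input is the Kirchheim-Preiss identity specialized to the $n$-uniform case: the Gaussian moment
\begin{equation}\nonumber
I(s) = \int_{\RR^d} e^{-s|y - x|^2} \, d\mu(y) = c\,\Gamma(n/2+1)\, s^{-n/2}
\end{equation}
is independent of $x \in \Sigma$, as one sees from the layer-cake formula and $\mu(B(x,r)) = cr^n$. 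All partial derivatives in $x$ of $I$ therefore vanish on $\Sigma$, and expanding $I$ to second order around a base point $x_0 \in \Sigma$ produces, for every $s>0$, a linear-algebraic identity determining an affine $n$-dimensional subspace best approximating $\Sigma$ near $x_0$ at scale $\sim s^{-1/2}$, together with explicit control of the quadratic deviation in terms of higher centred moments of $\mu$.

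The principal difficulty is promoting this per-scale flatness into the Carleson summability above. The natural strategy is to exploit scale invariance: the rescaled measures $\mu_{x,r} = (cr^n)^{-1}(T_{x,r})_* \mu$, with $T_{x,r}(y) = (y - x)/r$, all lie in the weak-$*$ closed family $\mathcal{U}_n$ of $n$-uniform measures of density one containing $0$ in their support, and $\mathcal{U}_n$ is weak-$*$ compact by the uniform volume bound $\nu(B(0,\rho)) = \rho^n$. By Preiss's theorem each $\nu \in \mathcal{U}_n$ is countably $n$-rectifiable, so $\beta_{2,\nu}(0,1)$ is small along the flat stratum of $\mathcal{U}_n$. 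The hard part will be closing the loop: the moment estimates express $\beta_{2,\mu}(x,r)^2$ in terms of centred moments of $\mu$ that themselves vary with $x$ and $r$, and one must sum these contributions across all scales without losing summability. A cleaner alternative --- and the path I would ultimately favour --- is to work with Tolsa's $\alpha$-numbers in place of $\beta$-numbers, since they encode a Wasserstein distance to the best flat $n$-measure that transforms more naturally under the polynomial identities produced by the Kirchheim-Preiss expansion, and are known to satisfy their own characterization of uniform $n$-rectifiability.
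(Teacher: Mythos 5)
The paper does not reprove Theorem \ref{Tolsa}; it cites Tolsa [T] and adapts that argument to the local setting in Sections 3--5, so the comparison is with Tolsa's proof as reflected there. Your proposal has a genuine gap at its core. The claim that expanding the Gaussian moment $I(x,s)$ to second order at $x_0 \in \Sigma$ produces, for each $s$, an affine $n$-plane best approximating $\Sigma$ at scale $s^{-1/2}$ with explicit quadratic control is refuted by the Kowalski--Preiss cone $C$ in (\ref{KPcone}): the measure $\mathcal{H}^3 \res C$ is $3$-uniform in $\RR^4$, yet by scale-invariance $\beta^3_{\mu}(B(0,r))$ equals a fixed positive constant for all $r>0$, so there is no approximating $3$-plane at the vertex at any scale. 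Whatever the moment identities give, it cannot be per-scale flatness. (There is also a smaller error: constancy of $I(\cdot,s)$ on $\Sigma$ forces only the tangential derivatives to vanish there; the zero set $\{F(\cdot,s)=0\}$ is a hypersurface strictly containing $\Sigma$, so normal derivatives need not vanish.) Your fallback --- that countable rectifiability of pseudo-tangents bounds $\beta_{2,\nu}(0,1)$ --- fails for the same reason: the KP cone is countably $3$-rectifiable but $\beta_{2}(0,1)$ is bounded below.

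What actually closes the argument, both in Tolsa's paper and in the localization given here, is a two-stage mechanism that avoids both traps. Radial invariance (Theorem \ref{radial}) gives local boundedness of truncated Riesz transforms (Lemmas \ref{smallRiesz}, \ref{inductflat}), and the David--Semmes-type Lemma \ref{bigRiesz} converts non-flatness across an interval of scales into a blow-up of the Riesz transform; this forces the \emph{existence} of a single sub-ball of comparable radius with small $\beta^n$ inside any given ball (Theorem \ref{bigballs}), a much weaker output than flatness everywhere. Then Preiss's ``connectedness at infinity'' rigidity for $n$-uniform measures (Theorem \ref{flat}), combined with compactness of pseudo-tangents, upgrades small $\beta$ on $N$ consecutive scales to small $b\beta$ on all finer scales (Theorem \ref{stability1}, Corollary \ref{stability2}), and feeding this into the Bilateral Weak Geometric Lemma (Theorem \ref{BWGL}) produces the Carleson packing of bad cubes. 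Your proposal correctly identifies Ahlfors regularity, the radial-integral input, and the need for a Carleson bound, but it is missing the two indispensable tools: the singular-integral estimate that manufactures a flat ball, and the Preiss rigidity theorem that propagates flatness to smaller scales.
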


Since uniformly distributed measures ``look like'' $n$-uniform measures on small scales one might expect this result to hold locally for uniformly distributed measures. In this paper, we will prove that this is indeed the case. Namely, we will prove the following theorem:

\begin{theorem}\label{mainunif}
Let $\mu$ be a uniformly distributed measure in $\RR^{d}$. Then there exists $n$ such that $\mu$ has big pieces of Lipschitz graph locally.
\end{theorem}
The proof is analogous to Tolsa's proof of Theorem $\ref{Tolsa}$. To apply the techniques that the author introduced in $\cite{T}$, one needs to use the fact that uniformly distributed measures locally behave like $n$-uniform measures and are radially invariant. These two properties allow us to obtain estimates on the Riesz transforms and to prove that every ball in $\Sigma$ the support of $\mu$ contains a relatively large ball that is flat. 

The second step consists in proving that flatness is stable for uniformly distributed measures. In other words, if the support is flat at small enough scale it will be flat at all smaller scales. The fact that uniformly rectifiable measures have $n$-uniform pseudo-tangents is the key idea allowing us to generalize the stability of flatness for $n$-uniform measures to uniformly distributed measures.

\newpage

\section{Preliminaries}
Let us first define the support of a measure.
\begin{definition}
Let $\mu$ be a measure in $\RR^{d}$. We define the support of $\mu$ to be 
\begin{equation}
\mbox{supp}(\mu)=\left\lbrace x \in \RR^{d} ; \mu(B(x,r))>0, \mbox{ for all } r>0 \right\rbrace. 
\end{equation}
Note that the support of a measure is a closed subset of $\RR^{d}$.
\end{definition}
We start with some facts about uniformly distributed measures.
The first is a theorem by Preiss describing the behavior of uniformly distributed measures at small and large scales.

\begin{theorem}[\cite{P}, Theorem 3.11] \label{dim} Suppose $\mu$ is uniformly distributed in $\mathbb{R}^d$, and let $\phi$ be its distribution function. Then there exist integers $n$ and $p$ such that: $$ \lim_{r \to 0} \frac{\phi(r)}{r^n} \mbox{ and } \lim_{r \to \infty} \frac{\phi(r)}{r^p} \mbox{  both exist.}$$ 
We denote $n$ and $p$ by $$n=dim_0 \mu \mbox{ and } p=dim_{\infty} \mu.$$ \end{theorem}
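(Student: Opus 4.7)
The plan is to reduce the asymptotic problem for $\phi$ to a tangent-measure analysis through the scalar identity that underlies uniform distribution. For any $x \in \Sigma$, the layer-cake formula together with the change of variables $u = sr^{2}$ gives
\[
G(s) := \int_{\RR^{d}} e^{-s|z-x|^{2}}\,d\mu(z)
      = \int_{0}^{\infty} 2sr\, e^{-sr^{2}}\phi(r)\,dr
      = \int_{0}^{\infty} \phi\bigl(\sqrt{u/s}\bigr)\,e^{-u}\,du,
\]
and by uniform distribution this depends only on $s$ (the same mechanism underlying Theorem \ref{analyticvar}). The small-$r$ behavior of $\phi$ is encoded in the $s\to\infty$ asymptotics of $G$, and the large-$r$ behavior in the $s\to 0$ asymptotics.

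The first step is to form the rescaled measures $\mu_{\lambda}(A) := \phi(\lambda)^{-1}\mu(\lambda A)$. Each $\mu_{\lambda}$ is uniformly distributed with normalized distribution $\phi_{\lambda}(r) := \phi(\lambda r)/\phi(\lambda)$ satisfying $\phi_{\lambda}(1)=1$. Applying the identity above to $\mu_{\lambda}$ yields uniform control of $\phi_{\lambda}$ on bounded ranges of $r$, and weak-$*$ compactness of locally finite Radon measures then produces subsequential limits $\nu_{0}$ as $\lambda\to 0^{+}$ and $\nu_{\infty}$ as $\lambda\to\infty$. Since uniform distribution passes to weak limits, $\nu_{0}$ and $\nu_{\infty}$ are themselves uniformly distributed.

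Next I would establish self-similarity of the limits. The cocycle $\phi_{\lambda}(tr) = \phi_{\lambda}(t)\,\phi_{\lambda t}(r)$ passes along a diagonal subsequence to the multiplicative functional equation $\phi_{\nu}(tr) = \phi_{\nu}(t)\,\phi_{\nu}(r)$ with $\phi_{\nu}(1)=1$, and monotonicity forces $\phi_{\nu}(r)=r^{\alpha}$ for some $\alpha \geq 0$. Hence $\nu_{0}$ is $n$-uniform and $\nu_{\infty}$ is $p$-uniform for real exponents $n,p \geq 0$. Uniqueness of the exponent, independent of the extracting subsequence, and therefore the actual existence of $\lim_{r\to 0}\phi(r)/r^{n}$ and $\lim_{r\to\infty}\phi(r)/r^{p}$, would then follow by feeding this back into the asymptotics of the real-analytic function $G(s)$ via a Tauberian/uniqueness argument: two different power laws at an endpoint of $G$ are incompatible with its analyticity.

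The main obstacle is the integrality of $n$ and $p$; the blow-up argument alone yields only real exponents in $[0,d]$. To upgrade, I would iterate the blow-up inside $\nu_{0}$ and $\nu_{\infty}$: a tangent to an $\alpha$-uniform measure is again $\alpha$-uniform and carries an extra translation symmetry along its support, and a density/flatness argument ultimately shows that such a doubly self-similar measure must be a multiple of $\mathcal{H}^{\alpha}$ restricted to an $\alpha$-plane. This forces $\alpha$ to be the dimension of a linear subspace of $\RR^{d}$, hence an integer between $0$ and $d$. Proving this flatness of iterated tangents is the deepest ingredient; everything else is soft.
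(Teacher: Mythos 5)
The paper does not prove this statement; it is quoted directly from Preiss [P, Theorem 3.11], so there is no in-paper proof to compare against. Evaluating your sketch on its own, the blow-up strategy is the right one, but two of the steps you describe as "soft" are in fact where the theorem lives, and as written they have genuine gaps.

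First, the derivation of the functional equation $\phi_{\nu}(tr)=\phi_{\nu}(t)\phi_{\nu}(r)$ from the cocycle $\phi_{\lambda}(tr)=\phi_{\lambda}(t)\,\phi_{\lambda t}(r)$ is circular. If you extract a subsequence $\lambda_{j}\to 0$ with $\phi_{\lambda_{j}}\to\phi_{\nu_{0}}$, the cocycle gives $\phi_{\nu_{0}}(tr)=\phi_{\nu_{0}}(t)\cdot\lim_{j}\phi_{\lambda_{j}t}(r)$, but $\{\lambda_{j}t\}$ is a different scaling sequence, and you do not yet know that its limit is $\phi_{\nu_{0}}$ rather than some other subsequential limit; a single diagonal extraction cannot accommodate all $t>0$ at once. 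What is needed is precisely the independence of the limit from the extracting subsequence, which you propose to establish \emph{afterwards} via the analyticity of $G$. The order of implications is reversed.

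Second, even if one knows that every subsequential scaling limit of $\phi_{\lambda}$ as $\lambda\to 0$ is $r\mapsto r^{n}$ for the same integer $n$, this does \emph{not} imply that $\lim_{r\to 0}\phi(r)/r^{n}$ exists. Writing $\psi(\lambda)=\phi(\lambda)/\lambda^{n}$, the blow-up analysis only yields $\psi(\lambda r)/\psi(\lambda)\to 1$ for every fixed $r>0$, i.e.\ that $\psi$ is slowly varying; slowly varying functions need not have a limit at $0$ (consider $\psi(\lambda)=\log(1/\lambda)$). The existence of the limit is the real content of Preiss's theorem, and it requires working with the analytic structure of the Kirchheim--Preiss function concretely, not just observing that "two different power laws are incompatible with analyticity"---analyticity of $G$ on $(0,\infty)$ by itself imposes no constraint on its asymptotics at the endpoints. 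Finally, the integrality step as you phrase it ("such a doubly self-similar measure must be a multiple of $\mathcal{H}^{\alpha}$ restricted to an $\alpha$-plane") overstates what is true: a uniform measure need not be flat (the Kowalski--Preiss cone is $3$-uniform and not a plane). What you actually need is Marstrand's theorem that uniform measures have integer dimension, which is proved by iterating tangents at suitably generic points until a flat measure is produced; that is a substantial theorem and should be invoked by name rather than compressed into a one-line "flatness argument."
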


We can deduce the following useful corollary about the growth of $\mu$ at small scales from this theorem.
\begin{corollary}\label{locallyAR}
Suppose $\mu$ is a uniformly distributed measure with $dim_{0} \mu = n$ and $dim_{\infty} \mu= p$, and $\phi$ the function associated to $\mu$. Let $R \in {\mathbb{R}}_{+}$. There exists $C \in {\mathbb{R}}_{+}$ depending on $R$ such that for all $r \leq R$, the following holds:
\begin{equation} C^{-1} r^n \leq \phi(r) \leq C r^n. 
\end{equation}
\end{corollary}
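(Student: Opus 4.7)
The plan is to split the range $(0, R]$ into a small-scale part, where the asymptotic from Theorem \ref{dim} gives the bound directly, and a part bounded away from zero, where the bound comes essentially for free from monotonicity and finiteness of $\phi$. I set $c_0 := \lim_{r \to 0} \phi(r)/r^n$.

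First, I would note that $c_0$ is finite (by Theorem \ref{dim}) and strictly positive (by Preiss' characterization of $n = \dim_0 \mu$ as the unique integer for which this limit is both finite and positive; positivity is in any case forced by the fact that $\phi(r) > 0$ for every $r > 0$ at any point of $\mathrm{supp}\,\mu$). Given this, by the very definition of the limit I can pick $\delta \in (0, R]$ small enough that $\tfrac{c_0}{2}\, r^n \le \phi(r) \le 2 c_0\, r^n$ for all $r \in (0, \delta]$.

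For $r \in [\delta, R]$, I would use that $\phi$ is non-decreasing (since $B(x, r) \subset B(x, r')$ whenever $r \le r'$, applied at any fixed $x \in \mathrm{supp}\,\mu$), strictly positive at $\delta$ (because $\phi(\delta) = \mu(B(x, \delta)) > 0$ for $x \in \mathrm{supp}\,\mu$), and finite at $R$ (since $\phi$ is $\mathbb{R}_+$-valued by hypothesis). Combined with the trivial bracketing $\delta^n \le r^n \le R^n$, this yields $\phi(\delta)/R^n \le \phi(r)/r^n \le \phi(R)/\delta^n$ on $[\delta, R]$. Taking $C := \max\bigl(2 c_0,\ 2/c_0,\ \phi(R)/\delta^n,\ R^n/\phi(\delta)\bigr)$ then produces the claimed two-sided bound uniformly over $r \in (0, R]$. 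The only step requiring more than bookkeeping is the positivity of $c_0$, which is standard in the Preiss theory, so I do not anticipate any serious obstacle.
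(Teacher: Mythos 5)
Your proof is correct and follows essentially the same outline as the paper's: split $(0,R]$ at a small threshold where the $\dim_0\mu = n$ asymptotic gives comparability, and handle the remaining compact range of $r$ by monotonicity and finiteness of $\phi$. The paper routes the second part through the large-scale threshold $r_\infty$ and exponent $p$ from Theorem \ref{dim} as well (with a three-way case analysis on where $R$ sits relative to $r_0$ and $r_\infty$), which, as your version makes clear, is not actually needed — monotonicity of $\phi$ together with $0 < \phi(\delta) \leq \phi(R) < \infty$ is enough, so your argument is a genuine streamlining that dispenses entirely with $\dim_\infty\mu$. One small inaccuracy worth flagging: the parenthetical claim that positivity of $c_0$ is ``forced by $\phi(r) > 0$ for every $r>0$'' does not hold on its own (e.g.\ $\phi(r) = r^{n+1}$ is everywhere positive yet $\phi(r)/r^n \to 0$); the correct justification is the one you give first, namely that Preiss' theorem selects $n = \dim_0\mu$ precisely so that $\lim_{r\to 0}\phi(r)/r^n$ is finite and strictly positive, and you should drop the misleading parenthetical.
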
 
\begin{proof}
According to Theorem $\ref{dim}$ there exist $r_0$ and $r_{\infty}$ such that:

\begin{align*} \mu(B(x,r)) \sim r^{n}, & \; x \in \Sigma, r \leq r_0 \\
                               \mu(B(x,r)) \sim r^{p}, &\; x \in \Sigma,  r \geq r_{\infty} 
                               \end{align*}  
If $R \leq r_0$, the statement follows with a $C$ not depending on  
$R$.
First, assume $r_0 \leq R \leq r_{\infty}$ and take $r$ such that $r_0 \leq r \leq R$. Then:
\begin{equation} \frac{{r_0}^n}{R^n} r^n  \lesssim \phi(r_0) \leq \phi(r) \leq \phi(r_{\infty}) \lesssim \frac{{r_{\infty}}^p}{{r_0}^n} r^n 
\end{equation}
Now assume $R \geq r_{\infty}$ and let $r \leq R$.
If $r_0 \leq r \leq r_\infty$, then: 
\begin{equation} 
\frac{{r_0}^n}{{r_{\infty}}^n} r^n \leq {{r_0}^n} \lesssim \phi(r_0) \leq \phi(r) \leq \phi(r_{\infty}) \lesssim{ r_{\infty}}^p \leq \frac{{r_{\infty}}^p}{{r_0}^n} r^n. 
\end{equation}
Finally, suppose $r_{\infty} \leq r \leq R$. Then:

\begin{equation}
\frac{{r_{\infty}}^p}{R^n} r^n \leq {r_{\infty}}^p \lesssim \phi(r) \lesssim {r}^p \leq \frac{R^p}{{r_{\infty}}^n} r^n. \end{equation}

\end{proof}

Another theorem in $\cite{KiP}$ states that uniformly distributed measures don't grow too fast.

\begin{theorem}[\cite{KiP}, Lemma 1.1]\label{doublingKiP}
Let $\mu$ be a uniformly distributed measure over $\RR^{d}$, $x \in \RR^{d}$, $0<s<r<\infty$ and $\phi$ its distribution function. Then $\mu(B(x,r)) \leq 5^{d} \left( \frac{r}{s} \right)^{d} \phi(s)$.

\end{theorem}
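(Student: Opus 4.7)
My plan is to prove the estimate by a standard Vitali-style packing argument that directly exploits the defining uniform-distribution identity $\mu(B(y,s)) = \phi(s)$, which is available whenever the center $y$ lies in $\mathrm{supp}(\mu)$. If $\mathrm{supp}(\mu) \cap B(x,r) = \emptyset$ then $\mu(B(x,r)) = 0$ and the bound is trivial, so I may assume the intersection is nonempty.

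The first step is to select a maximal $s$-separated collection $\{x_1, \ldots, x_N\} \subset \mathrm{supp}(\mu) \cap B(x,r)$, meaning $|x_i - x_j| \geq s$ whenever $i \neq j$. Then the open balls $B(x_i, s/2)$ are pairwise disjoint and all lie inside $B(x, r + s/2) \subset B(x, 2r)$, where I use $s < r$ in the last inclusion. A Lebesgue-volume comparison yields
\begin{equation*}
N \,\leq\, \left(\frac{2r}{s/2}\right)^{d} \,=\, \left(\frac{4r}{s}\right)^{d} \,<\, 5^{d}\left(\frac{r}{s}\right)^{d}.
\end{equation*}
This crude packing estimate is exactly what produces the constant $5^{d}$ in the statement; a more careful count would give $3^{d}$, but the weaker bound is cleaner and all that is needed later.

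For the second step I would observe that, by the maximality of the $s$-separated collection, every $y \in \mathrm{supp}(\mu) \cap B(x,r)$ must lie within distance $s$ of some $x_i$, so the enlarged balls $B(x_i, s)$ cover $\mathrm{supp}(\mu) \cap B(x,r)$. Since $\mu$ is supported on $\mathrm{supp}(\mu)$ and each center $x_i$ lies in $\mathrm{supp}(\mu)$, the uniform-distribution hypothesis applies to each $B(x_i, s)$, giving
\begin{equation*}
\mu(B(x,r)) \,=\, \mu\!\left(\mathrm{supp}(\mu) \cap B(x,r)\right) \,\leq\, \sum_{i=1}^{N} \mu(B(x_i, s)) \,=\, N\,\phi(s) \,\leq\, 5^{d}\!\left(\frac{r}{s}\right)^{d}\!\phi(s),
\end{equation*}
which is the claim. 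There is no real obstacle here: the only point requiring vigilance is that the packing centers be chosen inside $\mathrm{supp}(\mu)$, not merely inside $B(x,r)$, since it is only at such points that the identity $\mu(B(x_i, s)) = \phi(s)$ is available.
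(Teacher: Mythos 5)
This result is quoted in the paper directly from Kirchheim--Preiss as [KiP, Lemma~1.1]; the paper supplies no proof of its own, so there is nothing internal to compare against. Your argument is a correct, self-contained proof by the standard packing method: a maximal $s$-separated set $\{x_i\}_{i=1}^N \subset \mathrm{supp}(\mu)\cap B(x,r)$ gives disjoint balls $B(x_i,s/2)\subset B(x,2r)$, hence $N \le (4r/s)^d < 5^d(r/s)^d$ by volume comparison, and maximality gives the cover $\mathrm{supp}(\mu)\cap B(x,r)\subset \bigcup_i B(x_i,s)$; since a Radon measure on $\RR^d$ is concentrated on its support, $\mu(B(x,r))\le N\phi(s)$ follows, where the identity $\mu(B(x_i,s))=\phi(s)$ is legitimate precisely because each $x_i$ lies in $\mathrm{supp}(\mu)$. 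You correctly single out that last point as the one requiring care, and your side remark that a sharper count yields $3^d$ (from $r+s/2<3r/2$) is also right. In short, the proof is sound and is of the same Vitali/packing type one would expect behind the $5^d$ constant in [KiP].
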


Another interesting feature of uniformly distributed measures is that radial functions integrate nicely against them.

\begin{theorem}\label{radial}
Let $\mu$ be a uniformly distributed measure on $\RR^d$ and $f$ be a  non-negative Borel function on $\RR_{+}$. For all $z , y \in \mbox{supp}(\mu)$, we have:
$$ \int f(|x-z|) d\mu(x) = \int f(|x-y|)d\mu(x).$$
\end{theorem}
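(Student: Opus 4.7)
The plan is to reduce the statement to the case of indicator functions of closed balls and then bootstrap to arbitrary non-negative Borel $f$ by a standard measure-theoretic argument.

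First, I would handle the case $f = \mathbf{1}_{[0,r]}$ for fixed $r > 0$. Since $f(|x-z|) = \mathbf{1}_{\overline{B}(z,r)}(x)$, we have
\begin{equation*}
\int f(|x-z|)\, d\mu(x) = \mu(\overline{B}(z,r)) = \lim_{s \downarrow r} \mu(B(z,s)) = \lim_{s \downarrow r} \phi(s),
\end{equation*}
where the second equality uses $\overline{B}(z,r) = \bigcap_{s > r} B(z,s)$ together with continuity from above (the growth bounds of Corollary \ref{locallyAR} and Theorem \ref{doublingKiP} guarantee finiteness of $\mu$ on the nested balls), and the third uses the uniform distribution property applied at $z \in \mathrm{supp}(\mu)$. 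The final expression is independent of $z$, so the equality for indicators of the form $\mathbf{1}_{[0,r]}$ is established.

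Next, I would upgrade this to arbitrary non-negative Borel $f$ via pushforward measures. Define $h_z : \RR^d \to \RR_+$ by $h_z(x) = |x-z|$, and let $\nu_z := (h_z)_* \mu$ be the pushforward. The previous step shows $\nu_z([0,r]) = \nu_y([0,r])$ for every $r > 0$ and every $y, z \in \mathrm{supp}(\mu)$. The collection $\{[0,r] : r > 0\}$ is a $\pi$-system generating the Borel $\sigma$-algebra on $\RR_+$, and $\nu_z$ is $\sigma$-finite (since $\nu_z([0,r]) = \phi(r) < \infty$ for every $r$). Hence the uniqueness part of Dynkin's $\pi$-$\lambda$ theorem (equivalently, Carathéodory's extension theorem) gives $\nu_z = \nu_y$ as Borel measures on $\RR_+$.

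The conclusion then follows from the change-of-variables formula for pushforwards:
\begin{equation*}
\int f(|x-z|)\, d\mu(x) = \int_{\RR_+} f(t)\, d\nu_z(t) = \int_{\RR_+} f(t)\, d\nu_y(t) = \int f(|x-y|)\, d\mu(x),
\end{equation*}
valid for every non-negative Borel $f$ on $\RR_+$. There is no genuine obstacle here: the content of the theorem is entirely concentrated in the observation, immediate from the definition of uniform distribution, that the radial distribution of $\mu$ about any support point is the same function $\phi$; the rest is standard measure-theoretic plumbing.
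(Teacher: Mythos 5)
Your proof is correct, and at bottom it isolates the same essential content as the paper: the identity $\mu(B(z,r))=\mu(B(y,r))=\phi(r)$ for $z,y \in \mathrm{supp}(\mu)$ is the only place uniform distribution is used, and everything else is measure-theoretic bookkeeping. The technical execution differs, though. The paper treats the indicator of an annulus directly via the layer-cake (Fubini) formula and then invokes ``linearity and density of step functions'' to pass to general non-negative Borel $f$; that last step is a little terse, since one really wants a monotone-convergence argument rather than density in any norm. You instead start from indicators of closed balls, use continuity from above to get $\mu(\overline{B}(z,r))$ independent of $z$, package the radial distribution as the pushforward $\nu_z = (h_z)_*\mu$, and identify all the $\nu_z$ via the $\pi$-$\lambda$ (uniqueness) theorem, after which the change-of-variables formula gives the result for every non-negative Borel $f$ at once. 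This buys you a fully rigorous bootstrap with no density-of-simple-functions hand-waving, at the modest cost of introducing the pushforward and checking $\sigma$-finiteness. One small slip: $\nu_z([0,r]) = \mu(\overline{B}(z,r)) = \lim_{s\downarrow r}\phi(s)$ rather than $\phi(r)$ itself; this does not affect the finiteness you need for the uniqueness theorem, but the equation as written is not quite right.
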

\begin{proof}
This is a simple application of Fubini's theorem. Indeed, if $f= \alpha \chi_{I}$, where $\alpha\geq 0$ and $I=(c,d)$ is an interval
\begin{align*}
\int f(|x-z|) d\mu(x) &= \alpha \int_{0}^{1} \mu(\left\lbrace x; \chi_{I}(|x-z|) \geq t \right\rbrace) dt, 
                            \\&=\alpha \left( \mu(B(z,d) \cap {B(z,d)}^{c})\right) , 
                            \\&=\alpha \left( \mu(B(y,d)\cap {B(y,c)}^{c}) \right) , \mbox{ since } \mu(B(z,r)) = \mu(B(y,r)) \mbox{ for all }r
                            \\&=\int f(|x-y|) d\mu(x).
\end{align*}
The result follows for general non-negative Borel functions by linearity of the integral and density of step functions.
\end{proof}

Next, we introduce the following beta numbers initially introduced by P. Jones. They quantify how ''flat'' (or far from a plane) the support of a measure is.
\begin{definition}

\begin{itemize}  Let $\mu$ be a Radon measure in $\mathbb{R}^d$, and $\Sigma$ its support. 
\item We define Jones' $\beta^{n}_{\mu}$ number of $B$ to be: $$\beta^{n}_{\mu}(B)=\inf_{L} \sup_{x \in \Sigma \cap B} \frac{\mbox{dist}(x,L)}{r},$$
where $B$ is a ball in $\mathbb{R}^d$, and the infimum is taken over all $n$-planes.
\item We define the bilateral beta number $b\beta^{n}_{\mu}$ of $B$ to be: $$b\beta^{n}_{\mu}(B)= \inf_{L}\left( \sup_{x \in \Sigma \cap B} \frac{\mbox{dist}(x,L)}{r} + \sup_{p \in L \cap B} \frac{\mbox{dist}(p,\Sigma)}{r}\right),$$
where the infimum is taken over all $n$ planes in $\mathbb{R}^d$. We will drop the $n$ superscript and $\mu$ subscript when there is no ambiguity.
\item We say $\mu$ is $n$-flat if there exists an $n$-dimensional plane $V$ in $\mathbb{R}^d$ such that $\mu=\mathcal{H}^{n} \res V$.
\end{itemize}  \end{definition}

Let us define doubling measures.
\begin{definition}
Let $\mu$ be a measure in $\RR^{d}$. We say $\mu$ is a doubling measure if there exists $C>0$ such that:
\begin{equation}
\mu(B(x,2r)) \leq C \mu(B(x,r)), \mbox{ for all } x \in supp(\mu), \mbox{ for all } r>0. 
\end{equation}
The smallest such $C$ is called the doubling constant of $\mu$.
\end{definition}
The two following lemmas relate the weak convergence of a sequence of doubling measures to the convergence of their supports as sets in $\RR^{d}$. They are analogues of Lemmas $[2.2]$ and $[2.3]$ from $\cite{T}$.
\begin{lemma}\label{weakconv1}Let $\mu_{j}$, $\mu$ be doubling Radon measures, all having their doubling constants bounded by the same positive $C>0$. Let $\Sigma_j$, $\Sigma$ be the supports of $\mu_j$ and $\mu$ respectively, and $\overline{B}$ a closed ball in $\RR^d$ such that $\overline{B} \cap \Sigma \neq \emptyset$, and $\overline{B} \cap \Sigma_j \neq \emptyset$ for all $j$. If $\mu_{j}$ converges weakly to $\mu$  ($\mu_j \rightharpoonup \mu$), then $d_{\overline{B},2\overline{B}}(\Sigma_j ,\Sigma )$ converges to $0$, where $d_{\overline{B},2\overline{B}}(U,V)=\sup_{x \in U \cap \overline{B}} dist(x, V \cap 2\overline{B})+ \sup_{x \in V \cap \overline{B}} dist(x, U \cap 2\overline{B})$ . \end{lemma}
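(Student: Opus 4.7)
The plan is to argue each of the two summands in $d_{\overline{B},2\overline{B}}(\Sigma_j,\Sigma)$ separately by contradiction, using the Portmanteau formulation of weak convergence (namely $\liminf_j \mu_j(U) \geq \mu(U)$ for open $U$, and $\limsup_j \mu_j(K) \leq \mu(K)$ for compact $K$) together with compactness of $\overline{B}$ to extract convergent subsequences of witnesses. Throughout, let $r_0$ denote the radius of $B$ and $C$ the uniform doubling constant.

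For the summand $\sup_{x \in \Sigma \cap \overline{B}} \mathrm{dist}(x,\Sigma_j \cap 2\overline{B})$, I would suppose along a subsequence that there exist $x_j \in \Sigma \cap \overline{B}$ with $\mathrm{dist}(x_j,\Sigma_j \cap 2\overline{B}) \geq \varepsilon$ for some fixed $\varepsilon \in (0,r_0)$, and extract $x_j \to x \in \Sigma \cap \overline{B}$. For $j$ large the open ball $B(x,\varepsilon/2)$ lies inside $2\overline{B}$ and is disjoint from $\Sigma_j$, so $\mu_j(B(x,\varepsilon/2))=0$. Since $x\in\Sigma$ forces $\mu(B(x,\varepsilon/2))>0$, this contradicts the open-set half of Portmanteau. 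No doubling is used for this direction.

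The reverse summand is the substantive one. By contradiction, extract $x_j \in \Sigma_j \cap \overline{B}$ with $x_j \to x \in \overline{B}$ and $\mathrm{dist}(x_j,\Sigma \cap 2\overline{B}) \geq \varepsilon$. A small closed ball around $x$ then sits inside $2\overline{B}$ and is disjoint from $\Sigma$, so its $\mu$-measure is zero; Portmanteau for closed sets forces $\mu_j$ of that ball to tend to $0$, and a concentric $B(x_j,r)$ is contained in it for $j$ large, so $\mu_j(B(x_j,r))\to 0$. Here doubling enters decisively: choosing $k$ so that $B(x_j,2^k r)\supset 2\overline{B}$ (possible for any $x_j\in\overline{B}$) and iterating the uniform doubling bound gives
\[
\mu_j(2\overline{B}) \;\leq\; \mu_j(B(x_j,2^k r)) \;\leq\; C^k\,\mu_j(B(x_j,r)) \;\longrightarrow\; 0.
\]
This contradicts the lower bound $\liminf_j \mu_j(2\overline{B}) \geq \mu(B(y,\delta)) > 0$, which I obtain by fixing any $y \in \Sigma \cap \overline{B}$ and a small $\delta>0$ with $B(y,\delta)\subset 2\overline{B}$, and applying Portmanteau for open sets.

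The main obstacle is this second direction: weak convergence alone permits the $\mu_j$ to carry stray support points arbitrarily far from $\Sigma$, as long as the mass sitting on them is negligible on the test functions involved. The uniform doubling bound is precisely what is needed to propagate the vanishing of $\mu_j$ on a tiny ball around an errant $x_j\in\Sigma_j$ up to the vanishing of $\mu_j(2\overline{B})$, where it finally collides with the positive lower bound guaranteed by $\overline{B}\cap\Sigma\neq\emptyset$.
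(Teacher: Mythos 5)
Your proof is correct and takes essentially the same route as the paper's: both argue each summand by contradiction, and in the one substantive direction both use the uniform doubling constant to promote vanishing $\mu_j$-mass on a small ball near an errant point of $\Sigma_j \cap \overline{B}$ into vanishing of $\mu_j(2\overline{B})$, which then contradicts the positive lower bound on $\mu_j(2\overline{B})$ furnished by $\Sigma \cap \overline{B} \neq \emptyset$. The differences are cosmetic: you phrase the weak-convergence input via the open/closed Portmanteau inequalities where the paper sandwiches indicator functions between $C_c$ bumps, and in the doubling direction you first extract $x_j \to x$ and work with a fixed ball about the limit point, which is a slightly cleaner way to invoke weak convergence than the paper's use of the bumps $\chi_j$ supported near the moving points $p_j$.
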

\begin{proof}
We first prove that $\sup_{p \in \Sigma_j \cap \overline{B}} dist(p,\Sigma \cap 2\overline{B}) \rightarrow 0$.
Suppose not. 
Then, without loss of generality there exists $\epsilon >0$, $p_j \in \Sigma_j \cap B$, for $j>0$,  such that: $$B(p_j, 2\epsilon) \cap \Sigma \cap 2\overline{B} = \emptyset.$$
In particular, $\mu(B(p_j,2\epsilon))=0$.
Let $\chi_j$, $\tilde{\chi}$ be functions compactly supported in $4\overline{B}$ such that: $\chi_{B(p_j,\epsilon)} \leq \chi_j \leq \chi_{B(p_j, 2 \epsilon)}$, and $\chi_{\overline{B}} \leq \tilde{\chi} \leq\chi_{3\overline{B}}$. 
There exists $k_j \geq 0$ such that 
$$2\overline{B} \subset B(p_j, 2^{k_j} \epsilon).$$
In particular, since $p_j \in \overline{B}$ is closed,
$$k_j \leq \frac{log(2r(B))-log(\epsilon)}{log(2)} \leq K, $$
where $K$ does not depend on $j$.
Since $\mu_j$ are all doubling, we have:
$$C^{-K} \mu_j(2\overline{B}) \leq \mu_j(B(p_j,\epsilon)) \leq \int \chi_j d\mu_j.$$ 
On one hand, $\mu_j(2\overline{B}) \geq \int \tilde{\chi} d\mu_j$ and $ \int \tilde{\chi} d\mu_j \rightarrow \int \tilde{\chi} d\mu >0$ imply that $\liminf \int {\chi_{j}}d\mu_j > 0$.
On the other hand, since $\int \chi_j d\mu_j \leq \int \tilde{\chi} d(\mu-\mu_j) + \mu(B(p_j,2\epsilon))$, then $\int \chi_j d\mu_j  \rightarrow 0$ as $j \rightarrow \infty$, yielding a contradiction.

We now prove that $\sup_{p \in \Sigma \cap \overline{B}} \mbox{dist}(p,\Sigma_j \cap 2\overline{B}) \rightarrow 0$.
Suppose not. Then there exists $\epsilon > 0$, and, without loss of generality, points $x_j \in \Sigma \cap \overline{B}$ such that: $B(x_j, 2\epsilon) \cap \Sigma_j \cap 2\overline{B} = \emptyset$. In particular, $\mu_{j}(B(x_j, 2\epsilon))=0$.
Passing to a subsequence, we can assume that $x_j \rightarrow x$, $x \in \Sigma \cap \overline{B}$ since $\Sigma \cap \overline{B}$.  So there exists $N$ such that, when $j > N$, $|x-x_j| < \epsilon$. Consequently, $B(x, \epsilon) \subset B(x_j,2 \epsilon)$ and $\mu_j(B(x,\epsilon))=0$.
Let $\phi$ be a function compactly supported in $4\overline{B}$ such that $ \chi_{B(x,\frac{\epsilon}{5})} \leq \phi \leq \chi_{B(x,\epsilon)}$.
Then, on one hand, we have: $\int \phi d\mu_j =0$, implying that $\int \phi d\mu= \lim \int \phi d\mu_j=0$. On the other hand, $\int \phi d\mu \geq \mu(B(x, \frac{\epsilon}{5})) > 0$, yielding a contradiction. \end{proof}
\begin{theorem} \label{weakconv2}
Let $\mu_j$, $\mu$ be doubling measures, with the same doubling constant $c$, $B$ a ball such that: $\mu_j \rightharpoonup \mu$, $\Sigma_j \cap B \neq \emptyset$, $\Sigma \cap B \neq \emptyset$. Let $0<n \leq d$.
Then: \begin{equation}\label{eq-weakconv-1}
\frac{1}{2} \limsup \beta^{n}_{\mu_j}(\frac{1}{2}B) \leq \beta^{n}_{\mu}(B) \leq 2 \liminf \beta^{n}_{\mu_j}(2B). \end{equation}
\begin{equation}\label{eq-weakconv-2}\frac{1}{2} \limsup b\beta^{n}_{\mu_j}(\frac{1}{2}B) \leq b\beta^{n}_{\mu}(B) \leq 2 \liminf b\beta^{n}_{\mu_j}(2B). 
\end{equation}
\end{theorem}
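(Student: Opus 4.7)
The plan is to deduce both inequalities directly from Lemma \ref{weakconv1}, which states that weak convergence $\mu_j \rightharpoonup \mu$ of doubling measures with a common doubling constant forces the supports $\Sigma_j$ to converge to $\Sigma$ in the two-sided Hausdorff-type pseudo-distance $d_{\overline{B},2\overline{B}}$. Since $\beta^n$ and $b\beta^n$ quantify how well $\Sigma$ is approximated by an $n$-plane inside a ball, any sufficiently strong set-theoretic convergence of the supports ought to give comparability of the beta numbers, and the constants $2$ and $\frac{1}{2}$ just record the fact that Lemma \ref{weakconv1} involves a slightly enlarged ambient ball.

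For the upper bound $\beta^n_\mu(B) \leq 2\liminf_j \beta^n_{\mu_j}(2B)$, I first pass to a subsequence realizing the liminf and, for each $j$, pick an $n$-plane $L_j$ that is $\varepsilon$-optimal for $\beta^n_{\mu_j}(2B)$. Because $\Sigma_j \cap 2B$ is non-empty and $L_j$ approximates it, one may arrange $L_j \cap 3B \neq \emptyset$. Parameterizing $L_j$ by a point in $L_j \cap 3B$ together with a direction in the Grassmannian $\mathrm{Gr}(n,d)$ (both compact), I extract a further subsequence along which $L_j \to L$. For any $x \in \Sigma \cap B$, Lemma \ref{weakconv1} produces points $x_j \in \Sigma_j \cap 2B$ with $x_j \to x$, whence $\mbox{dist}(x,L) = \lim_j \mbox{dist}(x_j, L_j) \leq 2 r(B)\bigl(\liminf_j \beta^n_{\mu_j}(2B) + \varepsilon\bigr)$; taking the supremum over $x$, dividing by $r(B)$, and letting $\varepsilon \to 0$ yields the bound.

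For the lower bound $\frac{1}{2}\limsup_j \beta^n_{\mu_j}(\frac{1}{2}B) \leq \beta^n_\mu(B)$, no plane limiting is needed. Fix $\varepsilon > 0$ and a single $n$-plane $L$ that is $\varepsilon$-optimal for $\beta^n_\mu(B)$; I test this $L$ against $\Sigma_j$ on the smaller ball $\frac{1}{2}B$. Given $x_j \in \Sigma_j \cap \frac{1}{2}B$, Lemma \ref{weakconv1} (applied with $\frac{1}{2}\overline{B} \subset \overline{B}$) yields $x \in \Sigma \cap B$ with $|x_j - x| \leq \eta_j$, $\eta_j \to 0$, so by the triangle inequality $\mbox{dist}(x_j,L) \leq (\beta^n_\mu(B)+\varepsilon) r(B) + \eta_j$. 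Since $\frac{1}{2}B$ has radius $r(B)/2$, this rearranges to $\beta^n_{\mu_j}(\frac{1}{2}B) \leq 2\beta^n_\mu(B) + 2\varepsilon + 2\eta_j/r(B)$, and the conclusion follows upon taking $\limsup_j$ and $\varepsilon \to 0$. The bilateral version $b\beta^n$ is handled by the same two arguments, where one additionally controls $\sup_{p \in L \cap \cdot}\mbox{dist}(p,\Sigma_j)$ using the second half of Lemma \ref{weakconv1}.

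The main technical point is the compactness argument extracting a subsequential limit of the near-optimal planes $L_j$ in the upper-bound direction; this requires anchoring them through a bounded region, which in turn uses the non-emptiness of $\Sigma_j \cap 2B$ to keep $L_j$ within bounded distance of $B$. Once this compactness is in place, both inequalities and their bilateral counterparts fall out of Lemma \ref{weakconv1} combined with the triangle inequality.
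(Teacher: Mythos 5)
Your proof is correct. For the lower bound $\tfrac{1}{2}\limsup_j \beta^n_{\mu_j}(\tfrac{1}{2}B) \leq \beta^n_\mu(B)$ your argument (fix a near-optimal plane $L$ for $\Sigma$, transport points of $\Sigma_j$ to nearby points of $\Sigma$ via Lemma~\ref{weakconv1}, apply the triangle inequality) is exactly the paper's approach; the paper writes out only this half as an example and declares the rest analogous. For the upper bound $\beta^n_\mu(B) \leq 2\liminf_j \beta^n_{\mu_j}(2B)$ you introduce a compactness extraction in the Grassmannian to pass the near-optimal planes $L_j$ for $\Sigma_j$ to a limiting plane $L$ for $\Sigma$; this works, but it is more machinery than needed. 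The paper's symmetric triangle-inequality argument also handles this direction with no limit-plane: for each $j$, take $x \in \Sigma\cap B$, a nearest point $y_j \in \Sigma_j \cap 2B$, and any plane $L$, so that $\mbox{dist}(x,L) \leq \mbox{dist}(x,\Sigma_j\cap 2B)+\mbox{dist}(y_j,L)$; taking $\sup_x$, then $\inf_L$, then dividing by $r(B)$ gives $\beta^n_\mu(B) \leq r(B)^{-1}\sup_{x\in\Sigma\cap B}\mbox{dist}(x,\Sigma_j\cap 2B) + 2\beta^n_{\mu_j}(2B)$, and the error term vanishes by Lemma~\ref{weakconv1}, so $\liminf_j$ of the right-hand side does the job. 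What the Grassmannian argument buys you is an explicit limiting plane $L$ that is simultaneously good for all of $\Sigma\cap B$, which could be useful in other contexts, but for the stated inequalities the lighter argument suffices. One small point to tidy up: Lemma~\ref{weakconv1} lands the approximating points in closed balls ($2\overline{B}$ rather than $2B$), so you should either restrict to $x$ in the open ball and observe the approximants eventually enter $2B$, or work with slightly dilated balls throughout; the paper glosses over this as well.
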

\begin{proof} The proof is an easy consequence of Lemma $\ref{weakconv1}$. We prove that: $\beta^{n}_{\mu}(B) \leq 2 \liminf \beta^{n}_{\mu_j}(2B)$ as an example.
Take any $x \in \Sigma_j \cap B$. Let $y \in \Sigma \cap 2B$ be such that $|x-y|=\mbox{dist}(x, \Sigma \cap 2B)$. Pick any $n$-plane $L$ . Then:
$dist(x,L)  \leq \mbox{dist}(x,\Sigma \cap 2B) + \mbox{dist}(y,L)$, implying that $\inf_{L} \sup_{x \in \Sigma_j \cap B} \mbox{dist}(x,L) \leq \sup_{x \in \Sigma_j \cap B} \mbox{dist}(x, \Sigma \cap 2B) + \inf_{L}\sup_{y \in \Sigma \cap 2B} \mbox{dist}(y,L)$.  Therefore, $\beta^{n}_{\mu}(B) \leq 2 \beta^{n}_{\mu_j}(2B)$.

  \end{proof}
  
  To describe the local geometry of a measure, we study objects called its tangents and pseudo-tangents.

\begin{definition} Let $\mu$ be a Radon measure on $\mathbb{R}^d$. 
\begin{itemize} \item We say that $\nu$ is a tangent measure of $\mu$ at 
a point $a \in \mathbb{R}^d$ if $\nu$ is a non-zero Radon measure on $\mathbb{R}^d$, and if there exist sequences $(r_i)$ 
and $(c_i)$ of positive numbers such that $r_i \to 0$ and $c_i T_{a,r_i}\sharp \mu \rightharpoonup \nu$, as $i \to \infty$.
Here, $\mu_i \rightharpoonup \nu$ is a notation for $\mu_i$ converges weakly to $\nu$ and $T_{a,r_i}\sharp \mu$ is 
the push-forward of $\mu$ under the bijection $T_{a,r}(x)=\frac{x-a}{r}$.

\item Let $\Sigma$ denote the support of the measure $\mu$. We say that $\mu$ is $n$-uniform if there exists $c>0$ such that
for all $x \in \Sigma$, for all $r>0$, the following holds: 
$$
\mu(B(x,r))=c r^n. $$  \end{itemize} \end{definition}

In [$\cite{P}$, Theorem $3.11$], Preiss showed that if $\mu$ is an $n$-uniform measure, there exists a unique $n$-uniform measure $\lambda$ such that:
\begin{equation}
r^{-n} T_{x,r} \sharp \mu \rightharpoonup \lambda, \mbox{ as } r \to \infty, 
\end{equation}
for all $x \in \RR^{d}$. $\lambda$ is called the tangent measure of $\mu$ at $\infty$.

A remarkable fact about this measure $\lambda$ is the following ``connectedness at $\infty$'' for the cone of uniform measures. The following is a version of this result formulated by X. Tolsa in $\cite{T}$.

\begin{theorem}[\cite{P}]\label{flat}
Suppose $\mu$ is an $n$-uniform measure in $\RR^d$, $\lambda$ its tangent at $\infty$. 
\begin{itemize}
\item If $n=1,2$, then $\mu$ is flat.
\item If $n \geq 3$, there exists a constant $\tau_0$ depending only on $n$ and $d$ such that, if $\lambda$ satisfies the following: \begin{equation} \beta_{\lambda}^{n}(B(0,1)) \leq \tau_0, \end{equation} then $\mu$ is $n$-flat. 
\end{itemize} \end{theorem}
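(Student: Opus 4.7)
The plan is to split by the dimension $n$. When $n = 1$ or $n = 2$, Preiss's classification cited in the introduction already asserts that $\mu = \mathcal{H}^{n} \res V$ for an $n$-plane $V$, so $\mu$ is $n$-flat without any hypothesis on $\lambda$. The substantive case is $n \geq 3$, which I would handle in two steps.

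\medskip

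\noindent \textbf{Step 1: Rigidity among $n$-uniform measures.} First I would establish that there exists $\tau_0 = \tau_0(n,d) > 0$ such that every $n$-uniform measure $\nu$ in $\RR^{d}$ (with $0$ in its support) satisfying $\beta^{n}_{\nu}(B(0,1)) \leq \tau_0$ is flat. The argument is by compactness and contradiction. By Theorem \ref{doublingKiP}, the collection of such measures with a fixed density constant is uniformly doubling and hence weakly sequentially pre-compact; moreover, the $n$-uniformity condition passes to weak limits. If no $\tau_0$ existed, one would produce a sequence $\nu_j$ of non-flat $n$-uniform measures with $\beta^{n}_{\nu_j}(B(0,1)) \to 0$. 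A weakly convergent subsequence $\nu_j \rightharpoonup \nu_\infty$ would, by Theorem \ref{weakconv2} applied to an exhaustion by balls, satisfy $\beta^{n}_{\nu_\infty}(B(0,R)) = 0$ for every $R > 0$, so $\nu_\infty$ is supported on an $n$-plane and is flat. The contradiction is extracted from the radial integrals $H_s(x) = \int e^{-s|z-x|^{2}} d\nu(z)$, which by Theorem \ref{radial} are constant in $x \in \mbox{supp}(\nu)$; their asymptotic expansion in $s$ encodes higher-order moments of $\nu$ about a support point, yielding a nontrivial obstruction that distinguishes flat from non-flat $n$-uniform measures and does not degenerate in the weak limit.

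\medskip

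\noindent \textbf{Step 2: Propagating flatness back to $\mu$.} With Step 1 in hand, the hypothesis $\beta^{n}_{\lambda}(B(0,1)) \leq \tau_0$ forces $\lambda$ to be flat. To transfer flatness back to $\mu$, consider the blowdowns $\mu_r = r^{-n} T_{0,r} \sharp \mu$. A direct computation using $n$-uniformity of $\mu$ shows $\mu_r$ is itself $n$-uniform with the same density constant, and by the definition of $\lambda$ recalled just before the theorem, $\mu_r \rightharpoonup \lambda$ as $r \to \infty$. Applying Theorem \ref{weakconv2} to $B(0,2)$ gives $\limsup_{r \to \infty} \beta^{n}_{\mu_r}(B(0,1)) \leq 2\, \beta^{n}_{\lambda}(B(0,2)) = 0$, so for all sufficiently large $r$, $\beta^{n}_{\mu_r}(B(0,1)) \leq \tau_0$; Step 1 then forces $\mu_r$ to be flat. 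Since flatness is invariant under scaling, $\mu$ itself is flat.

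\medskip

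The main obstacle is Step 1: the existence of $\tau_0$ cannot be read off from the classification of $n$-uniform measures, which remains open for $n \geq 3$ in codimension $\geq 2$. Instead, it must be extracted from the polynomial/analytic structure of the support (as encoded in Theorem \ref{analyticvar}) combined with weak compactness in the space of $n$-uniform measures. This is the technical core of Preiss's original argument in $\cite{P}$; the remainder of the proof is essentially plumbing that feeds the rigidity result into a blowdown limit.
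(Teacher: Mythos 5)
Your Step 1 is false as stated, and this breaks both steps. You claim there is a single threshold $\tau_0 = \tau_0(n,d)$ such that \emph{every} $n$-uniform measure $\nu$ with $0$ in its support and $\beta^{n}_{\nu}(B(0,1)) \leq \tau_0$ is flat. A counterexample is the Kowalski--Preiss cone $C \subset \RR^{4}$ from $\eqref{KPcone}$: the measure $\mathcal{H}^{3}\res C$ is $3$-uniform and non-flat, and $C$ is a smooth $3$-manifold away from its vertex, so for $x \in C$ with $|x|$ large the translated measure $\nu_x := T_{x,1}\sharp(\mathcal{H}^{3}\res C)$ is $3$-uniform, has $0$ in its support, satisfies $\beta^{3}_{\nu_x}(B(0,1)) \lesssim |x|^{-1}$, yet is not flat. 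What Preiss actually proves is rigidity for \emph{conical} measures: a tangent measure at infinity is dilation-invariant, so $\beta^{n}_{\lambda}(B(0,r))$ is independent of $r$, and in that dilation-invariant subclass the counterexample above evaporates. This restriction also repairs your ``exhaustion by balls'' use of Theorem $\ref{weakconv2}$, which as written only yields $\beta^{n}_{\nu_\infty}(B(0,1/2)) = 0$ from control of $\beta^{n}_{\nu_j}$ on $B(0,1)$ alone; scale-invariance is what gives all radii. Even so, the compactness scheme does not close: concluding that a subsequential weak limit $\nu_\infty$ is flat does not contradict non-flatness of the $\nu_j$ themselves. Extracting that contradiction is the entire content of Preiss's moment analysis, which you acknowledge as the ``technical core'' but cannot then treat as plumbing.

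Step 2 inherits the same failure. You apply Step 1 to the blowdowns $\mu_r = r^{-n}T_{0,r}\sharp\mu$, which are $n$-uniform but generically \emph{not} conical, so even a corrected cone-only rigidity lemma does not apply to them; the implication ``$\beta^{n}_{\mu_r}(B(0,1)) \leq \tau_0 \Rightarrow \mu_r$ flat'' is precisely what the translated-cone counterexample refutes. In Preiss's argument the passage from ``$\lambda$ flat'' to ``$\mu$ flat'' is a separate open-closed/continuity argument along the scale parameter, not a second application of a ball-level rigidity statement. Note finally that the paper itself does not prove Theorem $\ref{flat}$: it is quoted from $\cite{P}$ (in the formulation of $\cite{T}$) and used as a black box.
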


Another notion of interest is that of pseudo-tangent measures introduced by Toro and Kenig in $\cite{KT}$.
\begin{definition} 
Let $\mu$ be a doubling Radon measure in $\RR^{d}$. We say that $\nu$ is a pseudo-tangent measure of $\mu$ at the point $x \in \mbox{supp} \mu$ if $\nu$ is a nonzero Radon measure in $\RR^{d}$ and if there exists a sequence of points $x_i \in \mbox{supp} \mu$ such that $x_i \to x$ and a sequence of positive numbers $\left\lbrace r_i \right\rbrace$ such that $r_i \downarrow 0$ and ${r_i}^{-n} T_{{x_i},{r_i}} \sharp \mu \rightharpoonup \nu$.
\end{definition}

Let us define the notion of asymptotically optimally doubling measures.
\begin{definition} If $x \in \Sigma$, $r>0$ and $t \in (0,1]$, define the quantity:
\begin{equation}
R_{t}(x,r)= \frac{\mu(B(x,tr))}{\mu(B(x,r))} - t^{n}.
\end{equation}
We say $\mu$ is asymptotically optimally doubling if for each compact set $K \subset \Sigma$, $x \in K$, and $t \in [\frac{1}{2},1]$ 
\begin{equation}\label{asymptoptim}
\lim_{r \to 0^{+}} \sup_{x \in K} \left| R_{t}(x,r) \right| = 0.
\end{equation}

\end{definition}
The following theorem is a useful feature of pseudo-tangent measures: they turn out to be $n$-uniform if the measure they originate from is asymptotically optimally doubling.

\begin{theorem}[\cite{KT}]\label{pseudounif} Let $\mu$ be a Radon measure in $\mathbb{R}^{d}$ that is doubling and $n$-asymptotically optimally doubling. Then all pseudo-tangent measures of $\mu$ are $n$-uniform. 
\end{theorem}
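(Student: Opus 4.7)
The plan is to show that any pseudo-tangent $\nu$ of $\mu$ satisfies $\nu(B(z,s))=cs^n$ for a single constant $c>0$ and every $z\in\mathrm{supp}(\nu)$ and every $s>0$. Fix sequences $x_i\to x\in\mathrm{supp}(\mu)$ and $r_i\downarrow 0$ with $\nu_i:=r_i^{-n}T_{x_i,r_i}\sharp\mu\rightharpoonup\nu$. Since weak convergence of Radon measures with uniformly bounded doubling constants implies local Hausdorff convergence of supports (as in Lemma \ref{weakconv1}), given $z\in\mathrm{supp}(\nu)$ we may pick $z_i\in\mathrm{supp}(\nu_i)$ with $z_i\to z$; equivalently $y_i:=x_i+r_iz_i\in\mathrm{supp}(\mu)$ with $y_i\to x$. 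The push-forward identity then reads
$$\nu_i(B(z_i,s))=r_i^{-n}\,\mu(B(y_i,r_is)).$$

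Next I would exploit asymptotic optimal doubling to force a rigid scaling law in the limit. For any $0<s\leq s'$ with $s/s'\in[1/2,1]$,
$$\frac{\nu_i(B(z_i,s))}{\nu_i(B(z_i,s'))}=\frac{\mu(B(y_i,r_is))}{\mu(B(y_i,r_is'))}=R_{s/s'}(y_i,r_is')+\Big(\frac{s}{s'}\Big)^{n}.$$
Because $\{y_i\}\cup\{x\}$ is a compact subset of $\mathrm{supp}(\mu)$ and $r_is'\downarrow 0$, the hypothesis \eqref{asymptoptim} forces $R_{s/s'}(y_i,r_is')\to 0$, so the ratio tends to $(s/s')^n$. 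Meanwhile, the sandwich $\nu_i(B(z,s-\epsilon))\leq\nu_i(B(z_i,s))\leq\nu_i(B(z,s+\epsilon))$ (valid for $i$ large using $z_i\to z$), combined with $\nu(\partial B(z,s))=0$ for all but countably many $s$, gives $\nu_i(B(z_i,s))\to\nu(B(z,s))$ for a.e. $s$. The doubling constant of $\mu$ passes to each $\nu_i$ and hence to $\nu$, so these limits are finite and strictly positive. Therefore
$$\frac{\nu(B(z,s))}{\nu(B(z,s'))}=\Big(\frac{s}{s'}\Big)^{n}\quad\text{for a.e. such } s,s'.$$

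Since $s\mapsto\nu(B(z,s))$ is monotone nondecreasing while $s\mapsto (s/s')^n\nu(B(z,s'))$ is continuous, this identity extends to all $0<s\leq s'$ with $s/s'\in[1/2,1]$; iterating at dyadic scales yields $\nu(B(z,s))=f(z)s^n$ for all $s>0$ and some $f(z)>0$. To show $f$ is constant on $\mathrm{supp}(\nu)$, for any $z,z'\in\mathrm{supp}(\nu)$ the inclusion $B(z',s)\subset B(z,s+|z-z'|)$ gives $f(z')s^n\leq f(z)(s+|z-z'|)^n$; letting $s\to\infty$ and using symmetry yields $f(z)=f(z')=:c$. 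Hence $\nu(B(z,s))=cs^n$ for all $z\in\mathrm{supp}(\nu)$ and $s>0$, i.e., $\nu$ is $n$-uniform.

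The main technical obstacle is the simultaneous motion of the centers $z_i$ and the compression $r_i\downarrow 0$ through the weak limit, together with the fact that weak convergence does not automatically preserve mass on arbitrary balls. I circumvent this by taking ratios at fixed normalized radii $s,s'$, restricting to the full-measure set of $s$ where $\nu(\partial B(z,s))=0$, and then using monotonicity to extend the resulting scaling identity to every radius; the decisive input forcing exactly exponent $n$ in the limit comes from the uniform-in-$K$ convergence in \eqref{asymptoptim}.
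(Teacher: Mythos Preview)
The paper does not give its own proof of this statement; it is quoted from \cite{KT} without argument. Your proposal supplies a correct and self-contained proof: the ratio computation using the definition of $R_t$ isolates the exponent $n$, the compact set $K=\{y_i\}\cup\{x\}\subset\Sigma$ is exactly what \eqref{asymptoptim} requires, the moving-center sandwich combined with $\nu(\partial B(z,s))=0$ for co-countably many $s$ handles the weak limit, and the monotonicity extension and the $s\to\infty$ argument for constancy of $f$ are both sound. One small point worth making explicit is that the direction of Lemma~\ref{weakconv1} you actually need (producing $z_i\in\Sigma_i$ approximating a given $z\in\Sigma$) does not use doubling of the limit measure $\nu$, so there is no circularity there; alternatively, the uniform doubling of the $\nu_i$ passes to $\nu$ by the same weak-limit reasoning you use elsewhere.
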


We define Ahlfors regular and locally Ahlfors regular measures. 

\begin{definition} Let $\mu$ be a Radon measure in $\mathbb{R}^d$, and $\Sigma$ its support.
\begin{itemize} \item We say $\mu$ is Ahlfors $n$-regular, $0 <n \leq d$ if there exists a constant $c_1$ such that:
 \begin{equation}\label{regular} c_1^{-1} r^n \leq \mu(B(x,r)) \leq c_1 r^n, \mbox{  for all } x \in \Sigma, \; r>0. 
 \end{equation}
 \item  We say $\mu$ is locally Ahlfors $n$-regular if for all $K$ compact, there exist constants $c_K>0$ and $r_K$  such that, for all $x \in \Sigma \cap K$, $0<r \leq r_K$, 
\begin{equation*} c_K^{-1} r^n \leq \mu(B(x,r)) \leq c_K r^n .
\end{equation*} \end{itemize} \end{definition}

A usefool tool to obtain discrete versions of the Jones beta numbers is to decompose $\Sigma$ into dyadic cubes. David proved that such a dyadic decomposition into $\mu$-cubes exists for the support of Ahlfors-regular measures $\mu$ in $\cite{D}$. Christ generalized this decomposition to spaces of homogeneous type in $\cite{C}$.
\begin{theorem}[Dyadic Decomposition 1, \cite{D}] Given an Ahlfors-regular measure $\mu$, $\Sigma$ its support, the following holds. 
For each $j \in \mathbb{Z}$, there exists a family $\mathcal{D}_j$ of Borel subsets of $\Sigma$ (the dyadic cubes of the $j$-th generation) such that:
\begin{itemize}
\item each $\mathcal{D}_j$ is a partition of $\Sigma$.
\item if $Q \in \mathcal{D}_j$,  $Q' \in \mathcal{D}_k$ with $k \leq  j$ , then either $Q \subset Q'$ or $Q \cap Q'= \emptyset$.
\item for all $j \in \mathbb{Z}$ and $Q \in \mathcal{D}_j$ , we have $diam(Q) \sim 2^{-j}$ and $c^{-1} 2^{-jn} \leq \mu(Q) \leq c 2^{-jn}$.
\item if $Q \in \mathcal{D}_j$, there exists some point $z_Q \in Q$ (the center of $Q$) such that
$dist(z_Q,  \Sigma \backslash Q) \geq c 2^{-j}$. \end{itemize}  \end{theorem}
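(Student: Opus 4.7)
The plan is to execute the classical construction due to David (and later generalized by Christ) via a hierarchy of maximal nets. For each $j \in \mathbb{Z}$, use Zorn's lemma together with the separability of $\Sigma$ to choose a maximal $2^{-j}$-separated subset $\mathcal{N}_j = \{z_\alpha^j\}_{\alpha \in A_j} \subset \Sigma$. Maximality yields the covering $\Sigma \subset \bigcup_\alpha B(z_\alpha^j, 2^{-j})$, while the separation property makes the balls $B(z_\alpha^j, 2^{-j-1})$ pairwise disjoint. Fix once and for all a well-ordering of each index set $A_j$, and define a parent map $\pi = \pi_j^{j+1} : A_{j+1} \to A_j$ by sending $\beta$ to the smallest index $\alpha$ (in the fixed well-ordering) that minimizes $|z_\alpha^j - z_\beta^{j+1}|$. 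Composing gives $\pi_j^k : A_k \to A_j$ for every $k > j$.

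Next, for each $z_\alpha^j$, define the discrete cube
\begin{equation*}
\tilde Q_\alpha^j := \bigcup_{k \geq j} \{z_\beta^k : \pi_j^k(\beta) = \alpha\},
\end{equation*}
and extend to a Borel partition $\{Q_\alpha^j\}_{\alpha \in A_j}$ of $\Sigma$ by assigning each non-net point $x \in \Sigma$ to the unique $\alpha$ for which the ancestors (at level $j$) of $x$'s closest representative in $\mathcal{N}_k$ stabilize to $\alpha$ as $k \to \infty$ (well-definedness uses Ahlfors regularity to bound the number of children of each node, together with a pigeonhole argument). The consistency $\pi_j^k = \pi_j^{j+1} \circ \pi_{j+1}^{j+2} \circ \cdots \circ \pi_{k-1}^{k}$ gives the nesting property immediately: every cube in $\mathcal{D}_k$ is either contained in or disjoint from each cube in $\mathcal{D}_j$ whenever $k \geq j$.

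For the diameter bound, iterating the parent-child estimate $|z_\beta^{k+1} - z_{\pi(\beta)}^k| \leq 2^{-k}$ along an ancestry chain gives $|z_\beta^k - z_\alpha^j| \leq \sum_{l=j}^{k-1} 2^{-l} \leq 2 \cdot 2^{-j}$ whenever $\pi_j^k(\beta) = \alpha$, so $\mathrm{diam}(Q_\alpha^j) \lesssim 2^{-j}$; the matching lower bound uses that $Q_\alpha^j$ contains at least one level-$(j+1)$ child separated from $z_\alpha^j$ by $\gtrsim 2^{-j-1}$. The measure bound $c^{-1} 2^{-jn} \leq \mu(Q_\alpha^j) \leq c \, 2^{-jn}$ then follows by inserting the two-sided inclusion $B(z_\alpha^j, c_0 2^{-j}) \cap \Sigma \subset Q_\alpha^j \subset B(z_\alpha^j, C_0 2^{-j})$ into the Ahlfors regularity estimate (\ref{regular}).

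The hardest part is the inner-radius (center) estimate $\mathrm{dist}(z_\alpha^j, \Sigma \setminus Q_\alpha^j) \geq c \, 2^{-j}$: the naive drift $\sum_{k \geq j} 2^{-k} = 2 \cdot 2^{-j}$ already exceeds the separation $2^{-j}$ of $\mathcal{N}_j$, so a literal ratio of $\tfrac12$ between generations is too crude to pin down a unique ancestor. The standard remedy is to carry out the construction with a fixed but sufficiently small scale ratio $\delta \in (0,1)$ in place of $\tfrac12$, chosen so that $\delta/(1-\delta)$ is strictly smaller than, say, $\tfrac{1}{4}$, and then to relabel generations so that the natural diameter scale $\delta^j$ is comparable to $2^{-j}$. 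With this tuning, any $x \in \Sigma$ sufficiently close to $z_\alpha^j$ has all its high-level ``closest net point'' representatives trapped in a small neighborhood of $z_\alpha^j$, and the separation of $\mathcal{N}_j$ then forces the level-$j$ ancestor of each of them to be $\alpha$, producing the ball $B(z_\alpha^j, c \, 2^{-j}) \cap \Sigma \subset Q_\alpha^j$ and hence the required center point $z_Q := z_\alpha^j$.
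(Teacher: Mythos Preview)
The paper does not supply its own proof of this statement: it is quoted verbatim as a known result from David \cite{D} (and immediately followed by Christ's more general version, also without proof). There is therefore nothing in the paper to compare your argument against.

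Your sketch is the standard net-based construction and is correct in outline. One technical point deserves a cleaner treatment: you correctly observe that with ratio $\tfrac12$ the cumulative drift $\sum_{k\ge j}2^{-k}$ swamps the separation of $\mathcal N_j$, and propose to run the construction at a smaller ratio $\delta$ and then ``relabel generations so that $\delta^j$ is comparable to $2^{-j}$.'' As written this does not quite work, since $\delta^j$ and $2^{-j}$ are only uniformly comparable when $\delta=\tfrac12$. The usual fixes are either (i) take $\delta=2^{-N}$ for a large integer $N$, obtaining cubes at scales $2^{-Nj}$, and then interpolate the missing generations by declaring $\mathcal D_k$ for $Nj\le k<N(j+1)$ to be a refinement of the level-$j$ cubes built from the intermediate nets $\mathcal N_k$ (this preserves nesting and the size/measure bounds up to constants depending on $N$); or (ii) make the nets nested, $\mathcal N_j\subset\mathcal N_{j+1}$, and choose parents more carefully so that the inner-ball estimate survives at ratio $\tfrac12$ directly (this is the route taken in the Hyt\"onen--Kairema refinement). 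Either way the remaining details are routine; the paper itself simply asserts, after stating Christ's theorem, that in a genuine metric space $\delta$ may be taken equal to $\tfrac12$.
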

\begin{definition}
A space of homogeneous type is a set $X$, equipped with:
\begin{itemize} \item a quasi metric $d$ for which all the associated balls are open. The constant from the weakened triangle inequality is denoted $A_0$.
\item a nonnegative, Borel, locally finite measure $\mu$ satisfying the doubling condition: \begin{equation*} \mu(B(x,2r)) \leq A_1 \mu(B(x,r)). \end{equation*} \end{itemize} \end{definition}
\begin{theorem}[Dyadic Decomposition 2, \cite{C}] \label{dyadic}
Suppose $X$ is a space of homogeneous type. Then, there exist constants $\delta \in (0,1)$, $a_0>0$, $\gamma>0$, and, for each $j \in \mathbb{Z}$, there exists a countable collection of open subsets $\mathcal{D}_{j}^{\mu}$ (the dyadic cubes of generation $j$) such that:
\begin{enumerate}
\item $\mu(X \backslash \bigcup_{Q \in \mathcal{D}_j} Q)=0.$
\item If $k \geq j$, $Q \in \mathcal{D}_k$, $Q' \in \mathcal{D}_j$, then either $Q \subset Q'$ or $Q \cap Q'=\emptyset$.
\item For each cube $Q \in \mathcal{D}_j$ and $k<j$, there exists a unique $Q' \in D_k$ such that : $Q \subset Q'$.
\item If $Q \in \mathcal{D}_j$, then $\rm{diam}(Q) \lesssim \delta^j$.
\item Each $Q \in \mathcal{D}_j$ contains some ball $B(z_Q,a_0\delta^j)$.
\item If $Q \in \mathcal{D}_j$, $\mu(\{ x \in Q; d(x, X\backslash Q)\leq t\delta^j\}) \lesssim t^{\gamma} \mu(Q)$, for all $t>0$.
\end{enumerate}
\end{theorem}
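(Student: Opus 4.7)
The plan is to follow M. Christ's construction, adapted to a general space of homogeneous type. Fix $\delta \in (0,1)$ sufficiently small (depending on the quasi-triangle constant $A_0$ and the doubling constant $A_1$). At each generation $j \in \mathbb{Z}$, choose a maximal $\delta^j$-separated set of ``centers'' $\{z_\alpha^j\}_\alpha \subset X$; maximality gives the covering $X \subset \bigcup_\alpha B(z_\alpha^j, \delta^j)$, while separation yields $d(z_\alpha^j, z_\beta^j) \geq \delta^j$ for $\alpha \neq \beta$. Define a parent map $\pi$ sending each $z_\alpha^{j+1}$ to a choice of nearest center $z_\beta^j$, ties broken arbitrarily. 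This organizes the centers into an infinite tree, one for each generation's relationship with the next.

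A preliminary collection of cubes is obtained by a Voronoi-style partitioning that is consistent with the tree: for each $\alpha$ and $j$, let $\widetilde{Q}_\alpha^j$ consist of all points whose sequence of nearest-center assignments at scales $\delta^j, \delta^{j+1}, \ldots$ passes through $z_\alpha^j$. Properties (1)--(5) fall out of this construction. The nesting properties (2) and (3) are built in; the partition property (1) holds because the equidistant locus between any two centers has $\mu$-measure zero under doubling; property (4), $\mathrm{diam}(\widetilde{Q}_\alpha^j) \lesssim \delta^j$, follows by summing the geometric series of diameters of descending generations $\sum_{k \geq 0} \delta^{j+k}$ using the quasi-triangle inequality; and property (5), that $B(z_\alpha^j, a_0 \delta^j) \subset \widetilde{Q}_\alpha^j$, follows from the $\delta^j$-separation of centers of generation $j$ together with the quasi-triangle inequality, for $a_0$ chosen small enough depending on $A_0$.

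The crux, and the main obstacle, is arranging the small boundary condition (6), which the preliminary cubes need not satisfy. The standard remedy is an inductive boundary modification: one reassigns thin ``collars'' of each $\widetilde{Q}_\alpha^j$ that lie within $\varepsilon \delta^j$ of its boundary to adjacent cubes of the same generation, propagating the reassignment coherently to all finer generations via the parent tree. The key quantitative input, proved from doubling and the annular geometry of balls in $X$, is that for a fixed small $\varepsilon$ there exists $\eta > 0$, independent of $\alpha$ and $j$, such that the $\varepsilon \delta^j$-collar of each preliminary cube carries at most a fraction $1 - \eta$ of its total mass. Iterating this estimate at every finer scale and resolving the resulting ambiguities through the tree yields the final cubes $Q_\alpha^j$ satisfying (6) with exponent $\gamma = -\log(1-\eta)/\log(\delta^{-1}) > 0$. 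The hardest aspect is bookkeeping: the trimming performed at scale $j$ must be compatible with all subsequent trimmings so that the nesting properties (2) and (3) survive the entire inductive modification; this is achieved by processing scales in order and only ever moving boundary slabs to cubes that share a common ancestor, so the nesting is preserved throughout.
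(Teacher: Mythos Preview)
The paper does not prove this theorem; it is quoted from Christ~\cite{C} as a black box and used later without argument. So there is no ``paper's own proof'' to compare against.

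On the substance of your sketch: the overall architecture (maximal $\delta^j$-separated nets, a parent tree, cubes built from descendants) is indeed Christ's. Two points, however, are off.

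First, your justification of~(1) via ``the equidistant locus between two centers has $\mu$-measure zero under doubling'' is not correct in a general space of homogeneous type; doubling gives no such conclusion for a quasi-metric. In Christ's argument, property~(1) is a by-product of the same analysis that yields~(6): the set missed by the open cubes at level $j$ is contained in arbitrarily thin collars and hence has measure zero.

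Second, and more importantly, your mechanism for~(6) is not Christ's and, as written, is not a proof. Christ does \emph{not} perform any ``inductive boundary modification'' or reassignment of collars. The cubes are defined once, and the small-boundary estimate is proved directly: if $x\in Q_\alpha^j$ lies within $t\delta^j$ of $X\setminus Q_\alpha^j$, then for roughly $\log_{1/\delta}(1/t)$ consecutive finer generations $l>j$ the point $x$ sits in a ``transition zone'' (close to two competing centers) at scale $\delta^l$. A doubling argument shows each such zone carries at most a fixed fraction $(1-\eta)$ of the mass of the ambient ball; multiplying over the $\sim\log(1/t)$ scales gives $t^\gamma$ with $\gamma=\log(1/(1-\eta))/\log(1/\delta)$. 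Your trimming scheme, by contrast, would have to simultaneously shrink every boundary layer while preserving the nesting~(2)--(3) and the inner ball~(5) across all generations; you have not explained why moving a slab at scale $j$ does not create a new, equally thick boundary at the receiving cube, nor why the process terminates with a uniform~$\gamma$. That is the genuine gap.
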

We denote $\mathcal{D}^{\mu} =\cup_{j \in \mathbb{Z}} mathcal{D}^{\mu}_j$ . Given $Q \in mathcal{D}^{\mu}_j$ , the unique cube $Q' \in mathcal{D}^{\mu}_{j-1}$ which
contains $Q$ is called the parent of $Q$. We say that $Q$ is a child of $Q'$. Also, given
$Q \in mathcal{D}^{\mu}$, we denote by $mathcal{D}^{\mu}(Q)$ the family of cubes $P \in mathcal{D}^{\mu}$ which are contained in
$Q$. We also denote by $mathcal{D}^{\mu}_{j}(Q)$ the descendants of $Q$ of generation $j$.
For $Q \in mathcal{D}^{\mu}_j$, we define the side length of $Q$ as $l(Q) = 2^{-j}$ . Notice that $c l(Q) \leq \mbox{diam}(Q) \leq l(Q)$.

For each $Q \in mathcal{D}^{\mu}$, we define $B_Q$ to be the ball $B(z_Q, 3l(Q))$, and the corresponding coefficients $\beta^{n}_{\mu}(Q)= \beta_{\mu}^n(B_Q)$ and $b\beta^{n}_{\mu}(Q)= b\beta_{\mu}^n(B_Q)$. 

In Theorem $\ref{dyadic}$, $\delta$ depends only on $A_0$. In fact, if $d$ is a metric (in which case $A_0=1$), $\delta$ can be replaced by $\frac{1}{2}$.

Putting (4) and (5) from Theorem $\ref{dyadic}$ together, we get: $\mathrm{diam}(Q) \sim 2^{-j}$ if $Q \in \mathcal{D}_j$.

If $\mu$ is locally Ahlfors $n$-regular, (4) and (5) also imply that if we fix a ball $B(0,\rho)$, then for the cubes $Q$ of $\mathcal{D}_j$ intersecting $B(0,\rho)$, such that $\mathrm{diam}(Q) \leq \rho$, the following holds : \begin{equation} c_{\rho}^{-1} 2^{-jn} \leq \mu(Q) \leq c_{\rho} 2^{-jn}. \end{equation}
 When $d$ is a metric, we will call the point $z_Q$ from (5) in Theorem $\ref{dyadic}$ the center of $Q$. Note that: $\mathrm{dist}(z_Q, X \backslash Q) \gtrsim 2^{-k}$.
 
Using Theorem $\ref{dyadic}$ and the remarks above, we can generalize David's dyadic decomposition to locally Ahlfors regular measures
 \begin{corollary} \label{Dyadic3}
Let $\mu$ be a  doubling and locally Ahlfors $n$-regular measure in $\RR^{d}$, and $D^{\mu}$ the dyadic decomposition of its support from Theorem $\ref{dyadic}$.
Then:\begin{enumerate}
\item each $\mathcal{D}_j$ is a partition of $\Sigma$.
\item if $Q \in \mathcal{D}_j$,  $Q' \in \mathcal{D}_k$ with $k \leq  j$ , then either $Q \subset Q'$ or $Q \cap Q'= \emptyset$.
\item for all $j \in \mathbb{Z}$ and $Q \in \mathcal{D}_j$ and $Q \cap K \neq \emptyset$ where $K$ is a compact set , we have $diam(Q) \sim 2^{-j}$ and $c_{K}^{-1} 2^{-jn} \leq \mu(Q) \leq c_{K} 2^{-jn}$.
\item if $Q \in \mathcal{D}_j $, there exists some point $z_Q \in Q$ (the center of $Q$) such that
$dist(z_Q,  \Sigma \backslash Q) \geq c 2^{-j}$.
\end{enumerate}
\end{corollary}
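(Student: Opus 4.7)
The plan is to apply Theorem \ref{dyadic} to the metric measure space $(\Sigma, |\cdot|, \mu)$. The hypotheses are immediate: the Euclidean metric restricted to $\Sigma$ is an honest metric, so $A_0=1$ and $\delta = \tfrac12$ is admissible; $\mu$ is Borel and locally finite; and the doubling property of $\mu$ on $\RR^d$ descends directly to $\Sigma$-balls since $\mu$ is supported on $\Sigma$. The four properties of the corollary then assemble themselves from the discussion immediately preceding the statement.

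Items (1) and (2) are direct transcriptions of Theorem \ref{dyadic}(1)-(2), with ``partition'' read modulo the $\mu$-null set $\Sigma \setminus \bigcup_Q Q$, and item (4) is Theorem \ref{dyadic}(5) rephrased via the implication $B(z_Q, a_0 2^{-j}) \subset Q \Rightarrow \mathrm{dist}(z_Q, \Sigma \setminus Q) \geq a_0 2^{-j}$. For item (3), the upper diameter bound $\mathrm{diam}(Q) \lesssim 2^{-j}$ is exactly Theorem \ref{dyadic}(4). The lower diameter bound and the measure estimates need local Ahlfors regularity: given $K$ compact, I would enlarge it slightly to $\tilde K = \{x : \mathrm{dist}(x,K) \leq 1\}$ with associated local constants $c_{\tilde K}, r_{\tilde K}$, and for cubes $Q$ with $Q \cap K \neq \emptyset$ and $2^{-j} \leq r_{\tilde K}$ I would sandwich $Q$ between $B(z_Q, a_0 2^{-j}) \subset Q$ (from Theorem \ref{dyadic}(5)) and $B(z_Q, C 2^{-j}) \supset Q$ (from the upper diameter bound). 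Local Ahlfors regularity applied to the two sandwiching balls then yields matching mass bounds $\mu(Q) \sim 2^{-jn}$ with constants depending on $K$. The diameter lower bound falls out by contradiction: if $\mathrm{diam}(Q) < \epsilon 2^{-j}$ then $Q \subset B(z_Q, \epsilon 2^{-j})$, forcing $\mu(Q) \leq c_{\tilde K} \epsilon^n 2^{-jn}$, which contradicts the lower mass bound once $\epsilon$ is chosen small enough.

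The only mild technicality, and the main obstacle such as it is, is the need to slightly enlarge $K$ at the start so that the balls centered at $z_Q$ for $Q$ meeting $K$ stay in a region where the local Ahlfors constants apply uniformly. The finitely many cubes intersecting $K$ at scales above $r_{\tilde K}$ can be absorbed into the final constant $c_K$.
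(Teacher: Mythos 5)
Your proposal is correct and follows the same route the paper takes in the remarks immediately preceding the corollary: apply Christ's theorem to the metric measure space $(\Sigma, |\cdot|, \mu)$ (with $A_0=1$, $\delta=\tfrac12$) and read off the four properties. You are actually a bit more careful than the paper on one point: the lower bound $\mathrm{diam}(Q)\gtrsim 2^{-j}$ does genuinely need local Ahlfors regularity via the sandwiching-balls mass estimate, whereas the paper's remark that it follows by ``putting (4) and (5) from Theorem \ref{dyadic} together'' glosses over this.
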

\begin{definition}Let $\mu$ be a doubling measure in $\RR^{d}$. We say that $\mathcal{F} \subset D^{\mu}$ is a Carleson family if there exists some constant $c>0$ such that:
$$ \sum_{Q \in \mathcal{F}, Q \subset R} \mu(Q) \leq c \mu(R),\mbox{   for all } R \in D^{\mu}.$$ \end{definition}

The notion of uniform rectifiability was introduced by David and Semmes in $\cite{DS2}$. 
It is a quantitative version of the notion of $n$-rectifiability. 
\begin{definition} Let $\mu$ be a Radon measure in $\mathbb{R}^d$, and $\Sigma$ its support.
 \begin{itemize} \item We say $\mu$ is uniformly $n$-rectifiable if it is Ahlfors $n$-regular, and there exist constants $\theta$ and $M$ so that,
 for each $x \in \Sigma$ and $R>0$, there is a Lipschitz mapping $g$ from $ \mathbb{R}^n$ to $\mathbb{R}^d$ such that $g$ has Lipschitz norm not exceeding $M$ and such that:
$$\mu(B(x,R) \cap g(\mathbb{R}^n)) \geq \theta R^n.$$
We say $\Sigma$ has big pieces of Lipschitz images (BPLI). 

\item We say $\mu$ is $\bf{locally}$ uniformly $n$-rectifiable  if it is locally Ahlfors $n$-regular, and for every compact set $K$, there exist constants $R_{K}$, $\theta_{K}$ and $M_{K}$ so that,
 for each $x \in \Sigma \cap K$ and $0<R \leq R_{K}$, there is a Lipschitz mapping $g$ from $\mathbb{R}^n$ to $\mathbb{R}^d$ such that $g$ has Lipschitz norm not exceeding $M_{K}$ and such that:
$$\mu(B(x,R) \cap g(\mathbb{R}^n) )\geq \theta_{K} R^n.$$
\end{itemize}
\end{definition}

We now define some notions related to uniform rectifiability. David and Semmes proved the following theorem in $\cite{DS2}$:
\begin{theorem}[Bilateral Weak Geometric Lemma,\cite{DS2}]\label{BWGL}
Let $\mu$ be an Ahlfors-regular measure in $\RR^{d}$. Then $\mu$ is uniformly rectifiable if and only if it satisfies the Bilateral Weak Geometric Lemma namely:
for all $\eta >0$, the family $B(\eta)= \{ Q \in D^{\mu}; b\beta^{n}_{\mu}(Q) > \eta\}$ is a Carleson family. \end{theorem}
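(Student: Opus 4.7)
The plan is to follow the corona decomposition framework of David and Semmes, which gives the cleanest route to both directions of this equivalence.

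For the direction $(\Rightarrow)$, assume $\mu$ is uniformly $n$-rectifiable. I would invoke the equivalence, also proved in $\cite{DS2}$, between uniform rectifiability and the existence of a corona decomposition of $D^{\mu}$: the cubes split into a Carleson family of ``bad'' cubes together with a family of stopping-time regions, on each of which $\Sigma$ is uniformly close to a Lipschitz graph of small constant. On a Lipschitz graph with small constant, the bilateral beta numbers are small at all cubes outside a further Carleson family (this is the routine ``flatness of Lipschitz graphs at most scales'' estimate). Summing the Carleson bounds over stopping regions and combining with the packing of the bad cubes gives that $B(\eta)$ is Carleson for every $\eta > 0$.

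For the converse, assume BWGL, fix $x_0 \in \Sigma$, $R > 0$, and a cube $R_0 \in D^{\mu}$ of side length $\sim R$ containing $x_0$. I would run a stopping-time argument: descend through the subtree of $R_0$ in $D^{\mu}$, marking a cube $Q$ as stopped when $b\beta^{n}_{\mu}(Q) > \eta$, for a suitably small $\eta$ to be chosen. Let $\mathcal{G}$ be the never-stopped descendants of $R_0$ and $\mathcal{F}$ the stopping cubes. For $Q \in \mathcal{G}$, the bilateral approximating plane $L_Q$ tilts only slightly under the parent-child transition, because both planes lie $\eta\, l(Q)$-close to the common portion of $\Sigma$ inside the parent. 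These planes can therefore be patched into a single Lipschitz map $g : \RR^n \to \RR^d$ (over a reference copy of $L_{R_0}$) whose Lipschitz constant is controlled by $\eta$, and whose image contains $\Sigma \cap R_0 \setminus \bigcup_{Q \in \mathcal{F}} Q$ up to a $\mu$-null set. The Carleson bound on $B(\eta)$ then gives $\mu\bigl(\bigcup_{Q \in \mathcal{F}} Q\bigr) \leq c(\eta)\, \mu(R_0)$ with $c(\eta) < 1$ once $\eta$ is small enough, so $\mu(B(x_0,R) \cap g(\RR^n)) \gtrsim R^n$ by the Ahlfors regularity of $\mu$, and BPLI follows.

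The main obstacle is the converse, specifically a two-parameter balance: $\eta$ must be small enough for the patched planes to form a genuine Lipschitz graph (consecutive planes must twist by at most a small angle so that $g$ is single-valued), yet not so small that the total mass of stopping cubes, which is controlled by a constant depending on the Carleson constant $c(\eta)$ of $B(\eta)$, absorbs all of $\mu(R_0)$. Once this is balanced, Ahlfors regularity converts the measure lower bound on $\Sigma \cap R_0 \cap g(\RR^n)$ into the BPLI lower bound $\theta R^n$. I expect Ahlfors regularity to be essential throughout, both to produce the dyadic decomposition $D^{\mu}$ and to control the measure of the image $g(\RR^n)$.
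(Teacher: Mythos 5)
This theorem is quoted in the paper from David--Semmes (\cite{DS2}) without proof, so there is no in-paper argument to compare against; I'll evaluate your outline on its own merits.

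Your forward direction (UR $\Rightarrow$ BWGL via corona decomposition plus flatness of Lipschitz graphs at most scales) is the right shape. The converse, however, has a genuine gap, and it is exactly at the spot you flagged as ``the main obstacle,'' except that the obstacle cannot be balanced away. The BWGL hypothesis gives a Carleson bound $\sum_{Q \in B(\eta),\, Q \subset R_0} \mu(Q) \leq C(\eta)\,\mu(R_0)$, and since your stopping cubes $\mathcal{F}$ are pairwise disjoint maximal elements of $B(\eta)$ this yields only $\mu\bigl(\bigcup_{Q\in\mathcal{F}} Q\bigr) \leq C(\eta)\,\mu(R_0)$. The constant $C(\eta)$ is the Carleson constant in the \emph{hypothesis}; it grows (or at best stays bounded away from zero) as $\eta \to 0$, so you cannot arrange $C(\eta) < 1$ by shrinking $\eta$. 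A single generation of descendants of $R_0$ could consist entirely of cubes in $B(\eta)$ without violating the Carleson condition; in that case your stopping time terminates immediately and carries all the mass, leaving nothing for the Lipschitz image. The direction that is being controlled by the Carleson condition is that bad cubes cannot \emph{stack up over many scales}, not that they are a small fraction at the top scale.

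A second, related gap: your Lipschitz-graph construction relies on the claim that consecutive approximating planes $L_Q$ tilt by $O(\eta)$ per generation, but a per-step bound does not control the cumulative rotation over an unbounded number of generations; the total tilt can become arbitrarily large, and then the patched map $g$ is not single-valued over $L_{R_0}$. The proof in \cite{DS2} handles both problems at once by building a corona decomposition with an enlarged stopping rule: one stops not only when $b\beta > \eta$, but also when the approximating plane has tilted by more than a fixed angle relative to the plane at the top of the stopping region, or when some ``accumulated $\beta$'' quantity exceeds a threshold. One then has to prove separately that these extra stopping families are also Carleson, which uses the BWGL input in a more telescoping way (tilt is bounded by a sum of $\beta$'s along a chain of cubes). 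Only after that does one obtain a corona decomposition, and thence BPLI. Without that extra stopping machinery and the corresponding packing argument, the converse does not go through.
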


\begin{definition} Let $\mu$ be a doubling, locally Ahlfors $n$-regular measure. For $\rho >0$, we define the local dyadic decomposition $\mathcal{D}_{\rho}$ as :
\begin{equation}\nonumber \mathcal{D}_{\rho}= \{ Q \in D^{\mu}, Q \cap B(0,\rho) \neq \emptyset, \mathrm{diam}(Q)\leq \rho \}. \end{equation}
We say $\mu$ satisfies the Bilateral weak geometric lemma locally if for all $\rho>0$ and for all $\eta >0$, the family $B_{\rho}(\eta)= \{ Q \in D_{\rho}; \;  b\beta^{n}_{\mu}(Q) > \eta\}$ is a Carleson family. \end{definition}
Since the arguments in the proof of Theorem $\ref{BWGL}$ are local (in the sense that the dyadic decomposition is divided into maximal dyadic cubes, and the result is proven on each of these cubes) and since Corollary $\ref{Dyadic3}$  implies that locally the setting is the same as David's dyadic decomposition, the results of Theorem $\ref{BWGL}$ hold locally.
\begin{theorem}[Local Bilateral Weak Geometric Lemma]
Let $\mu$ be a doubling, locally Ahlfors $n$-regular measure in $\RR^{d}$. Then $\mu$ is locally uniformly rectifiable if and only if it satisfies the Bilateral Weak Geometric Lemma locally i.e., at scale $\rho$  for some $\rho>0$ namely:
for all $\eta >0$  the family $B_{\rho}(\eta)= \{ Q \in D_{\rho}; \;  b\beta^{n}_{\mu}(Q) > \eta\}$ is a Carleson family. \end{theorem}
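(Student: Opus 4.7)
The plan is to reduce the statement to the global Bilateral Weak Geometric Lemma of David--Semmes (Theorem \ref{BWGL}) by a localization argument, exploiting the fact that the David--Semmes machinery is cube-by-cube: the characterization of uniform rectifiability for a top cube $R$ depends only on the behavior of $\mu$ on $R$ and its descendants, and Corollary \ref{Dyadic3} guarantees that on any such $R$ of diameter $\sim \rho$ the measure $\mu$ looks Ahlfors $n$-regular with constants controlled by the compact set containing $R$.

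\textbf{Forward direction.} Assume $\mu$ is locally uniformly $n$-rectifiable, fix $\rho>0$, and let $K$ be a compact neighborhood of $\overline{B(0,\rho)}\cap \Sigma$. By Corollary \ref{Dyadic3} there exist finitely many cubes $R_1,\dots,R_N \in D^\mu$ with $\mathrm{diam}(R_i)\sim \rho$ whose union covers $\overline{B(0,\rho)}\cap \Sigma$, and on each $R_i$ the restriction $\tilde\mu_i=\mu\!\res\! R_i$ is Ahlfors $n$-regular (with constants depending on $K$). Local BPLI at scales $\lesssim \rho$ transfers to BPLI for $\tilde\mu_i$. Hence by Theorem \ref{BWGL} applied to each $\tilde\mu_i$, the family $\{Q\in D^{\tilde\mu_i}: b\beta_{\tilde\mu_i}^n(Q)>\eta\}$ is Carleson. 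Because $\tilde\mu_i$ and $\mu$ coincide on balls $B_Q$ with $Q\subset R_i$ (up to a harmless enlargement handled by doubling), one has $b\beta^n_{\tilde\mu_i}(Q)\sim b\beta_\mu^n(Q)$ for $Q\subset R_i$. Every $Q\in D_\rho$ is contained in some $R_i$, so summing the Carleson packing estimates gives
\begin{equation*}
\sum_{Q\in B_\rho(\eta),\; Q\subset R} \mu(Q)\;\lesssim\; \mu(R)\quad\text{for all } R\in D^\mu,
\end{equation*}
which is the local Carleson condition.

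\textbf{Backward direction.} Suppose the local BWGL holds at scale $\rho_0$. Fix a compact set $K$ and a point $x\in K\cap\Sigma$, and let $0<R\leq R_K$ with $R_K\lesssim \rho_0$. Cover $K\cap\Sigma$ by finitely many top cubes $R_i$ of diameter $\sim \rho_0$ as before, and consider the one containing $B(x,R)$. On this top cube the restricted measure $\tilde\mu_i$ is Ahlfors $n$-regular, and the Carleson assumption restricted to descendants of $R_i$ is exactly the Carleson hypothesis of David--Semmes. Their construction — stopping time on cubes where $b\beta>\eta$, followed by assembling a Lipschitz graph over the ``good" regions inside $R_i$ — produces a Lipschitz map $g:\mathbb{R}^n\to\mathbb{R}^d$ with Lipschitz norm controlled by $c_K$ such that $\tilde\mu_i(B(x,R)\cap g(\mathbb{R}^n))\gtrsim R^n$. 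Since $\tilde\mu_i=\mu$ on $B(x,R)$, this is the required local BPLI estimate.

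\textbf{Main obstacle.} The crux of the argument is the verification that the David--Semmes proof of Theorem \ref{BWGL} is genuinely local: one must check that the stopping-time decomposition, the corona decomposition, and the construction of the Lipschitz approximation all stay within a single top cube $R_i$ and use only Ahlfors regularity for subcubes of $R_i$, rather than on all of $\Sigma$. This is implicit in the original argument — it proceeds cube by cube and the quantitative constants depend only on the local regularity constants $c_K$ and the Carleson constant — but the bookkeeping of these dependencies, in particular ensuring that the doubling constant of $\mu$ plus the local Ahlfors regularity suffice in place of the global Ahlfors condition, is the part that requires real care rather than routine transcription.
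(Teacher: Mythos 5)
Your proposal takes essentially the same route as the paper: both reduce to the David--Semmes Theorem \ref{BWGL} by observing that its proof proceeds cube-by-cube inside a fixed top cube, and both invoke Corollary \ref{Dyadic3} to say that for cubes in $\mathcal{D}_\rho$ the dyadic structure and regularity estimates mirror David's setting. The paper's own justification is a single sentence to this effect, so your more detailed sketch is the same argument at a comparable level of rigor, including the honest acknowledgement that a full verification of the locality of the David--Semmes machinery is the real work left undone.

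One caveat on your specific mechanism: restricting $\mu$ to a top cube $R_i$ to obtain a measure $\tilde\mu_i=\mu\res R_i$ introduces boundary problems that the paper avoids. The restriction $\tilde\mu_i$ need not be Ahlfors $n$-regular: for $x\in R_i\cap\Sigma$ near $\partial R_i$ one can have $\mu(B(x,r)\cap R_i)\ll r^n$, so the lower regularity bound fails. For the same reason, local BPLI for $\mu$ does not automatically give BPLI for $\tilde\mu_i$, since the Lipschitz image may carry most of its $\mu$-mass outside $R_i$, and the comparison $b\beta^n_{\tilde\mu_i}(Q)\sim b\beta^n_\mu(Q)$ also degrades for $Q$ near $\partial R_i$ because $\mbox{supp}\,\tilde\mu_i$ terminates there. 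The paper sidesteps all of this by not restricting the measure: it keeps $\mu$, notes that for $Q\in\mathcal{D}_\rho$ the cube-level quantities obey David's estimates with constants depending only on $\rho$, and then runs the David--Semmes Carleson-packing argument verbatim over the descendants of a fixed $R\in\mathcal{D}_\rho$. To repair your version you would either drop the restriction device in favor of this direct observation, or invoke the thin-boundary property (item 6 of Theorem \ref{dyadic}) to control the contribution of cubes near $\partial R_i$.
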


Our final definition is of the Riesz transforms of a measure.
\begin{definition}
Let $\mu$ be a Radon measure in $\RR^{d}$. The Riesz transform of $\mu$ for $z_0 \in supp(\mu)$, $0 < r < s$ is defined as :
 $$R_{r,s} \mu(z_0)= \int_{r\leq |z_0-y| \leq s} \quad \frac{z_0-y}{|z_0-y|^{n+1}} d\mu(y). $$
\end{definition}
In $\cite{T}$, the following estimate on the Riesz transform was an essential tool for Tolsa's proof of the uniform rectifiability of $n$-uniform measures.
\begin{lemma}[\cite{T}]\label{bigRiesz}
  Let $\mu$ be a Radon measure, $\Sigma = \mbox{supp} (\mu)$, $n\leq d$. Let $B$ be a ball centered in $\Sigma$.
 Suppose that there exist constants $\kappa$, $c_1$ such that:
 \begin{equation}\label{kindareg}c_1^{-1} r^n \leq \mu(B(x,r)) \leq c_1 r^n, \mbox{ for} \; x \in B \cap \Sigma, \kappa r(B) \leq r \leq r(B).
 \end{equation} 
 Moreover, suppose that for some $\epsilon>0$, we have: 
 \begin{equation}\label{notflat}\beta^{d-1}_{\mu}(B(x,r)) \geq \epsilon, \mbox{ for}\;  x \in B \cap \Sigma, \kappa r(B) \leq r \leq r(B). 
 \end{equation}
 Then, for any $M>0$, there exists $\kappa_0$  ($\kappa_0= \kappa(M,\epsilon,c_1)$), such that if $\kappa \leq \kappa_0$ , then there exists $r$, $\kappa r(B) \leq r \leq r(B)$, and points $x,z_0 \in B \cap \Sigma$, with $|x-z_0| \leq \kappa r(B)$ satisfying:
 \begin{equation} \label{bounded} \left| \frac{x-z_0}{\kappa r(B)} . R_{\kappa r(B),r}\mu(z_0) \right| \geq M. \end{equation}
 \end{lemma}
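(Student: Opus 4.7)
\medskip

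\noindent\textbf{Plan (proof by contradiction).} Assume $M>0$ is given and no such $\kappa_0$ exists: for every $\kappa>0$ as small as we wish there is a configuration satisfying \eqref{kindareg}--\eqref{notflat} but with
\[
\bigl|(x - z_0) \cdot R_{\kappa r(B), r}\mu(z_0)\bigr| < M \kappa r(B)
\]
for every admissible triple $(r,x,z_0)$. Fix an arbitrary $z_0 \in B \cap \Sigma$ and write $v(r) := R_{\kappa r(B), r}\mu(z_0)$. Letting $x$ range over $\Sigma \cap B(z_0,\kappa r(B))$, the displayed inequality forces $\Sigma \cap B(z_0,\kappa r(B))$ to be contained in a slab of width at most $2M\kappa r(B)/|v(r)|$ orthogonal to $v(r)$.

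\medskip

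\noindent\textbf{Upper bound on $|v(r)|$.} Apply \eqref{notflat} at the smallest admissible scale $r=\kappa r(B)$, centered at $z_0$: no $(d-1)$-slab of width strictly less than $\epsilon\kappa r(B)$ can cover $\Sigma\cap B(z_0,\kappa r(B))$. Comparing with the slab produced above gives
\[
|v(r)| \;\leq\; \frac{2M}{\epsilon} \qquad \text{for all } r\in[\kappa r(B),r(B)] \text{ and all } z_0\in B\cap\Sigma.
\]

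\medskip

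\noindent\textbf{Lower bound on $|v(r)|$.} To reach a contradiction I would show that the Ahlfors regularity \eqref{kindareg} and the non-flatness \eqref{notflat} at every intermediate scale force $\sup_r|v(r)|$ to blow up as $\kappa\to 0$. The natural device is to decompose the annulus $\{\kappa r(B)\leq |y-z_0|\leq r(B)\}$ into the $\sim\log(1/\kappa)$ dyadic shells $A_k=\{2^{-k-1}r(B)\leq |y-z_0|\leq 2^{-k}r(B)\}$ and, in each shell, to exploit the antisymmetry of the Riesz kernel $y\mapsto (y-z_0)/|y-z_0|^{n+1}$ under the reflection $y\leftrightarrow 2z_0-y$: this kernel integrates to zero against any measure symmetric about $z_0$, so \eqref{notflat} in $A_k$ must produce a ``symmetry defect'' of magnitude $\gtrsim c(\epsilon,c_1)$ in some unit direction $e_k$. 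Summing the contributions across the $\sim\log(1/\kappa)$ shells yields, for some $r$,
\[
|v(r)| \;\gtrsim\; c(\epsilon,c_1)\,\log(1/\kappa),
\]
which contradicts the upper bound $2M/\epsilon$ once $\kappa \leq \kappa_0(M,\epsilon,c_1)$.

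\medskip

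\noindent\textbf{Main obstacle.} The delicate point is the lower bound: the defect directions $e_k$ in different shells need not align, and cancellation between shells is a genuine threat. Two routes for handling this are (i) a compactness argument — along a sequence $\kappa_j\to 0$, pass to a weak limit of the rescalings $(\kappa_j r(B))^{-n}T_{z_0,\kappa_j r(B)\sharp}\mu$; the uniform upper bound on $|v|$ and \eqref{kindareg} produce a non-zero limit which is $n$-uniform and non-flat at every scale, and then Theorem \ref{flat} (applied to its tangent measure at $\infty$) forces a contradiction; or (ii) an averaging argument, choosing $z_0$ itself from a well-distributed family in $B\cap\Sigma$ and integrating the kernel identity against $d\mu(z_0)$ to kill the rotating-direction problem via symmetrization of the Riesz form. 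Either route converts the qualitative antisymmetry of the kernel into the required quantitative logarithmic lower bound and completes the proof.
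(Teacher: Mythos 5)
The lemma is quoted from Tolsa \cite{T} and the present paper supplies no proof of it, so there is no in-paper argument to compare against; what follows is an assessment of your sketch on its own terms.

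Your upper bound is correct: if $|(x-z_0)\cdot v(r)| < M\kappa r(B)$ for every $x\in\Sigma\cap B(z_0,\kappa r(B))$, then $\Sigma\cap B(z_0,\kappa r(B))$ lies in a slab of width $\lesssim M\kappa r(B)/|v(r)|$ normal to $v(r)$, and \eqref{notflat} at scale $\kappa r(B)$ forces $|v(r)|\lesssim M/\epsilon$. The difficulty is the lower bound, and there the sketch has two genuine gaps rather than merely a ``delicate point.'' First, the per-shell estimate is false: $\beta^{d-1}_\mu\geq\epsilon$ on a shell $A_k$ does \emph{not} imply that $\int_{A_k} K(z_0-y)\,d\mu(y)$ has magnitude $\gtrsim c(\epsilon,c_1)$. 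The oddness of the Riesz kernel kills the integral against any measure that is symmetric about $z_0$, and centrally symmetric configurations (the Kowalski--Preiss cone \eqref{KPcone} at its vertex is the standard example) have $\beta^{d-1}$ bounded away from zero at every scale. Non-flatness is simply not the same thing as asymmetry, so your ``symmetry defect of magnitude $\gtrsim c(\epsilon,c_1)$'' does not exist in general. Second, even if every shell did contribute a vector of definite size, the directions $e_k$ can oscillate in a correlated way --- e.g.\ $+e,-e,+e,-e,\ldots$ --- so that every partial sum $v(r)$ stays in a bounded set; $\sup_r|v(r)|$ need not grow, let alone grow like $\log(1/\kappa)$. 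Neither \eqref{kindareg} nor \eqref{notflat} a priori rules this out, and defeating this cancellation is the entire content of the lemma.

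The two rescue routes do not close these gaps. In route (i), after rescaling by $\kappa_j r(B_j)$ the Ahlfors bounds \eqref{kindareg} hold only at scales $[1,1/\kappa_j]$, so the weak limit is Ahlfors $n$-regular at scales $\geq 1$ but carries no information at scales $<1$; it is not $n$-uniform, and Theorem \ref{flat} requires an $n$-uniform measure as input. Moreover, Theorem \ref{flat} concludes flatness from \emph{smallness} of $\beta^n_\lambda$ for the tangent at $\infty$, whereas your contradiction hypothesis produces \emph{large} $\beta^{d-1}$ at every scale of the limit --- there is no mechanism in your setup that manufactures the needed smallness, so the claimed contradiction does not materialize. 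Route (ii) is too vague to assess, but symmetrizing in $z_0$ does nothing to prevent the defect direction from rotating with the dyadic scale, which is precisely the obstruction. As written, the sketch establishes the easy half (the upper bound) and leaves the substantive half unproved.
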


\section{Existence of big flat balls for uniformly distributed measures}
We start by proving that the Riesz transform of a uniformly distributed measure is locally bounded.
The two following lemmas are local analogues to Lemmas 3.1 and 3.4 in $\cite{T}$ for uniformly distributed measures.
 \begin{lemma}\label{smallRiesz} Let $\mu$ be a uniformly distributed measure, $dim_0(\mu)=n$, $R>0$. Let $z_0 \in \Sigma$, $0<r \leq R$. Then we have: $$ \left| \frac{x-z_0}{r}\,.\,R_{r,s} \mu(z_0) \right| \leq c, \mbox{ for all }r <s \leq \frac{R}{2}, \mbox{ and for all } x \in B(z_0,r)$$ where $c$ depends only on $R$. \end{lemma}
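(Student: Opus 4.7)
The plan is to prove $(x-z_0)\cdot R_{r,s}\mu(z_0) = O(r)$ with the implicit constant depending only on $R$, from which the lemma follows by dividing by $r$. I focus on the core case $x\in\Sigma$ with $|x-z_0|\leq r/2$. The starting point is the polarization identity $2(x-z_0)\cdot(y-z_0) = |y-z_0|^2 + |x-z_0|^2 - |y-x|^2$, which decomposes
$$2(x-z_0)\cdot R_{r,s}\mu(z_0) = |x-z_0|^2 \int_A \frac{d\mu(y)}{|y-z_0|^{n+1}} + \int_A \frac{|y-z_0|^2-|y-x|^2}{|y-z_0|^{n+1}}\, d\mu(y),$$
where $A = A_{r,s}(z_0) := \{y : r \leq |y-z_0| \leq s\}$. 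A dyadic decomposition of $A$ combined with Corollary~\ref{locallyAR} gives $\int_A |y-z_0|^{-(n+1)}\, d\mu \lesssim 1/r$ (with constant depending on $R$), so the first summand is $O(r)$.

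The second summand is handled by recognising it, up to $O(r)$, as a first-order Taylor expansion of the Riesz potential $U_x(y) := |y-x|^{-(n-1)}$ about $x = z_0$. Using $\nabla_x U_x(y) = (n-1)(y-x)|y-x|^{-(n+1)}$, for $|y-z_0| \geq 2|x-z_0|$ one has
$$|y-x|^{-(n-1)} = |y-z_0|^{-(n-1)} + (n-1)\frac{(y-z_0)\cdot(x-z_0)}{|y-z_0|^{n+1}} + O\!\left(\frac{|x-z_0|^2}{|y-z_0|^{n+1}}\right),$$
valid throughout $A$ when $|x-z_0|\leq r/2$. Integrating against $d\mu$ over $A$ and absorbing the quadratic remainder into $O(r)$, the matter reduces to showing that
$$D := \int_A \frac{d\mu(y)}{|y-x|^{n-1}} - \int_A \frac{d\mu(y)}{|y-z_0|^{n-1}} = O(r).$$
Since $x,z_0\in\Sigma$, Theorem~\ref{radial} applied to the radial function $t\mapsto \mathbf{1}_{[r,s]}(t)\,t^{-(n-1)}$ gives $\int_A |y-z_0|^{-(n-1)}\, d\mu = \int_{A(x)} |y-x|^{-(n-1)}\, d\mu$ with $A(x) := A_{r,s}(x)$, so $D$ is an integral of $|y-x|^{-(n-1)}$ over the symmetric difference $A \triangle A(x)$.

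Unwinding, $D = \bigl(\int_{B(x,r)} - \int_{B(z_0,r)}\bigr) |y-x|^{-(n-1)}\, d\mu + \bigl(\int_{B(z_0,s)} - \int_{B(x,s)}\bigr) |y-x|^{-(n-1)}\, d\mu$. The crucial cancellation is $\mu(B(x,t)) = \mu(B(z_0,t)) = \phi(t)$ for $x,z_0\in\Sigma$ and every $t>0$: subtracting the constant $r^{-(n-1)}$ (resp.\ $s^{-(n-1)}$) from the integrand inside each pair of thin shells does not change the value of that pair. On each shell $|y-x|$ lies within $|x-z_0|\leq r/2$ of the relevant radius, so a mean value estimate yields $\bigl|\,|y-x|^{-(n-1)}-r^{-(n-1)}\bigr|\lesssim |x-z_0|/r^n$, and similarly at scale $s$; combined with the shell mass bounds $\phi(3r/2)\lesssim r^n$ and $\phi(s+r/2)\lesssim s^n$ from Corollary~\ref{locallyAR}, both contributions are $O(|x-z_0|) = O(r)$, as wanted. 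The main obstacle is precisely this mass-cancellation step: the naive bound $\mu(\mathrm{shell})\lesssim s^n$ costs a factor of $s/r$ and destroys the estimate, so one really needs the identity $\mu(B(x,s)) = \mu(B(z_0,s))$. The remaining regimes ($|x-z_0| > r/2$, or $x \notin \Sigma$) can be recovered by adapting the Taylor step to a second-order expansion, or by exploiting the fact that $R_{r,s}\mu(z_0) \in \overline{\mathrm{span}}(\Sigma - z_0)$ to propagate the bound from $x\in\Sigma\cap B(z_0,r/2)$ to all of $B(z_0,r)$.
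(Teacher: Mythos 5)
Your core strategy---Taylor expand a radial potential whose gradient is the Riesz kernel, then invoke radial invariance (Theorem~\ref{radial}) to produce cancellation---is the same as the paper's. The difference is cosmetic but real: the paper works with a smoothed truncation $\Psi(y)=\rho(|y|)$, $\rho(u)=-\int_u^\infty\psi$, where $\psi$ is a $C^\infty$ cutoff of $t^{-n}\mathbf{1}_{[r,s]}$; this makes $\nabla\Psi$ equal to the Riesz kernel on the annulus and makes the error terms come out of Taylor's formula with no further fuss. You instead use the raw potential $U_x(y)=|y-x|^{-(n-1)}$ truncated sharply to the annulus, and then have to account for the mismatch between $A_{r,s}(z_0)$ and $A_{r,s}(x)$ by hand via the symmetric-difference / constant-subtraction argument. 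Your version of this step is correct, and the observation that you must exploit $\mu(B(x,t))=\mu(B(z_0,t))=\phi(t)$ to kill the leading term on the $s$-shell (rather than just bounding the shell mass by $s^n$) is exactly the right point. That said, a few items deserve flagging.

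First, the polarization identity at the start is a tautology and does no work. Writing $T_1=|x-z_0|^2\int_A|y-z_0|^{-(n+1)}d\mu$ and $T_2=\int_A\frac{|y-z_0|^2-|y-x|^2}{|y-z_0|^{n+1}}d\mu$, one checks directly that $T_1+T_2=2(x-z_0)\cdot R_{r,s}\mu(z_0)$; thus ``showing $T_2=O(r)$ given $T_1=O(r)$'' is word-for-word equivalent to the lemma's conclusion. You should simply Taylor expand $U_x$ directly, as you in fact do in the next paragraph; the identity $D=(n-1)(x-z_0)\cdot R_{r,s}\mu(z_0)+O(r)$ already closes the loop without the $T_1,T_2$ split.

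Second, a genuine (if localized) gap: the potential $U_x(y)=|y-x|^{-(n-1)}$ is identically $1$ when $n=\dim_0\mu=1$, so $\nabla_xU_x\equiv 0$ and your Taylor expansion produces nothing. The case $n=1$ is not vacuous (e.g.\ $\mu=\mathcal{H}^1\res L$ for a line $L$). The fix is standard---use $U_x(y)=-\log|y-x|$, whose gradient is the $1$-Riesz kernel---but it must be said. The paper's construction with $\psi$ avoids this entirely because $\nabla\Psi(z)=\psi(|z|)\hat z$ is the Riesz kernel for every $n$.

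Third, your suggested extension beyond the regime $x\in\Sigma$, $|x-z_0|\leq r/2$ is not reliable. The ``span'' argument fails: knowing $|(v)\cdot R_{r,s}\mu(z_0)|\lesssim r$ only for $v\in(\Sigma-z_0)\cap B(0,r/2)$ does not bound $|R_{r,s}\mu(z_0)|$, since $R_{r,s}\mu(z_0)$ is built from $\Sigma$ on the \emph{annulus} $A$ and need not lie in $\overline{\mathrm{span}}(\Sigma\cap B(z_0,r/2)-z_0)$. In fact neither your proof nor the paper's handles $x\notin\Sigma$ (both rely on Theorem~\ref{radial} which requires $x\in\Sigma$), and the lemma is only ever applied with $x\in\Sigma$, so that restriction is harmless; but the range $|x-z_0|\in(r/2,r]$ with $x\in\Sigma$ must be covered. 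The clean fix is to write $R_{r,s}\mu(z_0)=R_{2r,s}\mu(z_0)+R_{r,2r}\mu(z_0)$, apply your estimate at scale $2r$ (where $|x-z_0|\leq r\leq (2r)/2$), and bound $|R_{r,2r}\mu(z_0)|\leq r^{-n}\mu(B(z_0,2r))\lesssim 1$ by Corollary~\ref{locallyAR}. The paper sidesteps this because it proves the estimate for all $|x|\leq r$ at once from the single Taylor identity.
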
   
 \begin{proof}
 Without loss of generality, assume $z_0=0$.
 For $r$,$s$ fixed, $0<r<s\leq \frac{R}{2}$, define the function $\psi : \mathbb{R} \rightarrow \mathbb{R}$ to be a compactly supported $C^{\infty}$ function with the following properties:
 $$\psi(t) = \begin{cases} 0 & \mbox{if} \;  |t| \geq 2s \; \mbox{or} \; |t| \leq \frac{r}{2} \\ \frac{1}{t^{n}} & \mbox{if}  \; r \leq |t| \leq s \end{cases}.$$
 and 
 \begin{equation}\label{phi}|\psi(t)| \leq c \; \mbox{min}\left( \frac{1}{r^{n}}, \frac{1}{t^{n}}\right) \; \mbox{for all} \; t \in \mathbb{R} . 
 \end{equation}
 We also require that: 
 \begin{equation} \label{phi'} 
 | \psi'(t) | \leq c \; \mbox{min} \left( \frac{1}{(3r)^{n+1}}, \frac{1}{t^{n+1}} \right) , \mbox{for all }  t \in \mathbb{R}.
 \end{equation}
 We define real-valued functions $\rho$ and $\Psi$ respectively from $\mathbb{R}$ and $\mathbb{R}^d$ as follows:
 $$\rho(u) = -\int_{u}^{\infty} \: \psi(t) dt, \; u \in \mathbb{R},$$
 and $$\Psi(y)= \rho(|y|), \; y \in \mathbb{R}^d.$$
 
 Since $\mu$ is uniformly distributed and $\Psi$ is radial, for all $x \in supp(\mu)$, we have by Theorem $\ref{radial}$
 \begin{equation}\label{unifeq}\int \: \Psi(x-y) d\mu(y) - \int \: \Psi(y) d\mu(y) = 0. \end{equation}
 
 On the other hand, Taylor's formula gives:
 \begin{equation}\label{taylor1}\Psi(x-y) - \Psi(-y)= x \cdot \nabla \Psi(-y)+ \frac{1}{2} x^{T} \cdot \nabla^2\Psi(\xi_{x,y}) \cdot x, \; \mbox{where} \; \xi_{x,y} \in [x-y,-y] \subset \mathbb{R}^d .
 \end{equation} 
 Note that: $\nabla \Psi(z)=\psi(|z|) \cdot \frac{z}{|z|}$ implying that:
 \begin{equation}\label{taylor2} - R_{r,s}\mu(0)= \int_{r \leq |y| \leq s} \nabla \Psi(-y) d\mu(y) \end{equation}
 since $\psi(|z|)=\frac{1}{|z|^n}$ when $|z| \in (r,s)$.
Combining ($\ref{unifeq}$),($\ref{taylor1}$) and ($\ref{taylor2}$), we get
$$ x \cdot \int_{|y| \leq r} \nabla \Psi(-y) d\mu(y) - x  \cdot R_{r,s}\mu(0) + x \cdot \int_{|y| > s} \nabla \Psi(-y) d\mu(y)+ \frac{1}{2} x^{T}. \left(\int \nabla^2\Psi(\xi_{x,y})d\mu(y)\right) \cdot x \:=0.$$
This gives: \begin{equation}\label{taylor3} x.R_{r,s} \mu(0) = x\;.\; \int_{\left\lbrace |y| \leq r \right\rbrace \cup \left\lbrace |y| > s \right\rbrace} \nabla \Psi(-y) d\mu(y) + \frac{1}{2} x^{T}.\left(\int \nabla^2\Psi(\xi_{x,y}) d\mu(y)\right).\; x .
\end{equation}

Let us estimate the right hand-side of ($\ref{taylor3}$).
\\Using the inequalities ($\ref{phi}$) and ($\ref{phi'}$) and Corollary $\ref{locallyAR}$, we get the following estimates on the first term in ($\ref{taylor3}$):
$$\left| \int_{|y| \leq r} \nabla \Psi(-y) d\mu(y) \right| \leq \frac{c}{r^{n}} \mu(B(0,r)) \leq C \; \mbox{ since }r \leq R$$
and since $2s \leq R$
$$\left| \int_{|y|>s} \nabla \Psi(-y) d\mu(y) \right| = \left| \int_{s<|y|\leq 2s} \nabla \Psi(-y) d\mu(y) \right| \leq\frac{1}{s^{n}} \mu(B(0,2s)) \leq C\: 2^{n}, $$
where $C$ depends on $R$.
Let us now estimate the second order derivative of $\Psi$ using the fact that: $$\left| \nabla^2 \Psi(\xi_{x,y}) \right| \leq c \; \mbox{min} \left(\frac{1}{3r^{n+1}}, \frac{1}{|\xi_{x,y}|^{n+1}}\right).$$

If $|y| \leq 2r$, we get: $\left| \nabla^2 \Psi(\xi_{x,y}) \right| \leq \frac{C} {r^{n+1}}$.  

If $|y| > 2r$, then $\frac{|y|}{2} \leq |x-y| \leq 2|y|$ implies that $ |\xi_{x,y}| \sim |y|$ and hence: $\left| \nabla^2 \Psi(\xi_{x,y}) \right| \leq \frac{C}{|y|^{n+1}}$.

Therefore, since $\mu$ is uniformly distributed, and $\psi$ compactly supported in $[-R,R]$, we get:
\begin{align*}
\int \left| \nabla^2 \Psi(\xi_{x,y}) \right| d\mu(y) & \leq c \int_{|y| \leq 2r} \frac{1}{r^{n+1}} d\mu(y) + c \int_{2r< |y| \leq R} \frac{1}{|y|^{n+1}} d\mu(y), \\ & \leq \frac{c}{r} + c \int_{2r< |y| \leq R} \frac{1}{|y|^{n+1}} d\mu(y), \mbox{ since } 2r \leq R .
\end{align*} 
We claim that:
$$ \int_{2r<|y| \leq R} \frac{1}{|y|^{n+1}} d\mu(y) \leq \frac{c}{r}. $$
Indeed
\begin{align*}
\int_{2r < |y| \leq R} \frac{1}{|y|^{n+1}} d\mu(y) & = \int_{\frac{1}{R^{n+1}}}^{\frac{1}{(2r)^{n+1}}} \mu \left( \left\lbrace y : |y| < \frac{1}{t^{\frac{1}{(n+1)}}} \right\rbrace \right) dt, \\
&=  \int_{\frac{1}{R^{n+1}}}^{\frac{1}{(2r)^{n+1}}} \mu \left(B\left(0,\frac{1}{t^{(n+1)}}\right)\right) dt, \\
& \leq c  \int_{\frac{1}{R^{n+1}}}^{\frac{1}{(2r)^{n+1}}} \frac{1}{t^{\frac{n}{(n+1)}}}dt, \\
&= c \left(\frac{1}{2r} - \frac{1}{R} \right) \leq \frac{c}{2r}.
\end{align*}

This gives:
$$\left|x \;.\;R_{r,s} \mu(0) \right| \leq c |x|+ c \frac{|x|^2}{r} \leq cr $$

since $|x| \leq r$.

 \end{proof}    
 \begin{lemma}\label{inductflat} Let $\mu$ be a uniformly distributed measure, $n=dim_{0}\mu$, $n<m \leq d$, $\epsilon >0$. Let $K$ be a compact set, $R=diam(K)$.
 There exist constants $\delta$, $\tau$ depending only on $\epsilon$, $K$, $n$ and $d$ such that: if B is a ball centered in $\Sigma \cap K$, with $r(B) \leq R$ and $\beta_{\mu}^{m}(B) \leq \delta$, then there exists a ball $B'$, $B' \subset B$, centered in $\Sigma$ such that: $\beta_{\mu}^{m-1}(B') \leq \epsilon$ and $r(B') \geq \tau \:.\: r(B)$.
 \end{lemma}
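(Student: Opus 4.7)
The argument is by contradiction and compactness, adapting Tolsa's proof of the analogous statement for $n$-uniform measures in \cite{T}. Suppose the conclusion fails: there exist $\epsilon>0$ and sequences $\delta_j\to 0$, $\tau_j\to 0$, together with balls $B_j=B(x_j,r_j)$ centered in $\Sigma\cap K$ with $r_j\leq R$ and $\beta^{m}_{\mu}(B_j)\leq \delta_j$, such that every sub-ball $B'\subset B_j$ centered in $\Sigma$ with $r(B')\geq \tau_j r_j$ satisfies $\beta^{m-1}_{\mu}(B')>\epsilon$. Passing to a subsequence, assume $r_j\to r_\infty\in[0,R]$; the principal case is $r_j\to 0$.

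\textbf{Extract and describe a limit.} Rescale by $\mu_j=r_j^{-n}T_{x_j,r_j}\sharp\mu$. Corollary \ref{locallyAR} and Theorem \ref{doublingKiP} give, uniformly in $j$, local $n$-Ahlfors regularity and bounded doubling of $\mu_j$ on any fixed ball about the origin. After a diagonal extraction, $\mu_j\rightharpoonup\nu$ for a nonzero Radon measure $\nu$ with $0\in\operatorname{supp}\nu$. Theorem \ref{weakconv2} applied at the ball $B(0,1)$ gives $\beta^{m}_{\nu}(B(0,1/2))\leq 2\liminf\beta^{m}_{\mu_j}(B(0,1))=0$, so $\operatorname{supp}\nu\cap B(0,1/2)\subset V$ for some $m$-plane $V$ through the origin. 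Applying Theorem \ref{weakconv2} to balls of radius $s\gg\tau_j$ centered at nearby points of $\Sigma_j$ (tracked via Lemma \ref{weakconv1}) upgrades the sub-ball hypothesis to: for every $y\in\operatorname{supp}\nu\cap B(0,1/4)$ and every sufficiently small $s>0$, $\beta^{m-1}_{\nu}(B(y,s))\geq\epsilon/2$. So $\nu$, viewed inside $V$, is nowhere $(m-1)$-flat.

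\textbf{Contradiction via the Riesz transform.} Since $\dim_0\mu=n$, Theorem \ref{dim} forces $\mu$ to be asymptotically optimally doubling in the sense of \eqref{asymptoptim}, and Theorem \ref{pseudounif} then gives that $\nu$ is $n$-uniform (in the case $r_j\to 0$). In particular $\nu$ is globally $n$-regular and itself uniformly distributed, and may be viewed as an $n$-uniform measure in $V\cong\mathbb{R}^m$, since the $n$-dimensional Riesz kernel $y/|y|^{n+1}$ on measures supported in $V$ agrees with the ambient $\mathbb{R}^d$ kernel. Lemma \ref{bigRiesz} now applies with $\mathbb{R}^d$ replaced by $V\cong\mathbb{R}^m$: its codimension-$1$ hypothesis \eqref{notflat} becomes $\beta^{m-1}_{\nu}\geq\epsilon/2$, which holds everywhere, and \eqref{kindareg} holds by $n$-uniformity. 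Hence for arbitrarily large $M$ there exist $x,z_0\in\operatorname{supp}\nu$ for which the Riesz transform at $z_0$ exceeds $M$ in absolute value. On the other hand, $\nu$ is uniformly distributed with global $r^n$ growth, so the proof of Lemma \ref{smallRiesz} applies without its local scale restriction and yields a uniform upper bound on the same Riesz transform, contradicting the previous sentence.

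\textbf{Main obstacle.} The most delicate part is the transfer of the everywhere-nonflatness condition from $\mu_j$ to $\nu$: this requires iterating Theorem \ref{weakconv2} at all scales $s\gg\tau_j$ with centers traced along the Hausdorff-converging supports. A second subtlety is the case $r_j\not\to 0$, in which Theorem \ref{pseudounif} does not apply directly; there one argues that $\mu_j$ is still uniformly distributed with distribution function converging to $r_\infty^{-n}\phi(r_\infty\cdot)$, so $\nu$ is uniformly distributed and locally $n$-regular in $V$, and the same Riesz-transform dichotomy delivers the contradiction.
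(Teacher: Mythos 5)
Your proposal is essentially correct, but it takes a genuinely different route from the paper's. The paper argues directly: it takes the best-approximating $m$-plane $L$ for $B$, defines $\nu=\pi_{L}\sharp(\mu\restriction B)$, and proves by hand that $\nu$ is approximately $n$-Ahlfors regular on the scale range $[\delta r(B),r(B)]$ and that the Riesz transform of $\nu$ is uniformly bounded there (this is the bulk of the work: careful kernel estimates comparing $R_{r_0,r}\nu(z_0)$ to $R_{r_0,r}\mu(z_1)$, plus the local bound of Lemma \ref{smallRiesz}). Applying Lemma \ref{bigRiesz} in $L\cong\mathbb{R}^m$ then forces the existence of an $(m-1)$-flat sub-ball of comparable size, and a short projection computation transfers that flatness back from $\nu$ to $\mu$. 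There is no contradiction, no compactness, and no appeal to pseudo-tangents; the dependence of $\delta,\tau$ on $\epsilon,K,n,d$ comes out explicitly from the estimates.

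Your argument instead negates the statement, rescales, and passes to a weak limit $\nu$; you then identify $\nu$ either as a pseudo-tangent measure (hence $n$-uniform, via Theorem \ref{pseudounif} and the asymptotic-optimal-doubling property of Lemma \ref{doubling}) or, when $r_j\not\to 0$, as a rescaled copy of $\mu$, and in both cases $\nu$ is exactly supported in an $m$-plane and everywhere non-$(m-1)$-flat at small scales. The same Riesz-transform dichotomy (Lemmas \ref{bigRiesz} and \ref{smallRiesz}) then gives the contradiction. This does avoid the delicate $S_1,S_2,D_1,D_2$ kernel estimates in the paper, by offloading the difficulty to the compactness step. The price is (i) the transfer of everywhere-nonflatness to the limit through Theorem \ref{weakconv2} and Lemma \ref{weakconv1}, which you flag but do not execute --- and in particular the conclusion should be ``for all $s$ up to some fixed constant,'' not merely ``sufficiently small $s$,'' since Lemma \ref{bigRiesz} needs \eqref{notflat} on a full scale range $[\kappa r(B),r(B)]$; and (ii) the $r_j\not\to0$ case, which needs the separate observation that the weak limit is then $r_\infty^{-n}T_{x,r_\infty}\sharp\mu$, still uniformly distributed and locally flat, and that the Riesz-transform argument must then be run at scales inside the flat ball. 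Both gaps are fillable along the lines you indicate; once done, the two proofs give the same conclusion with the same (non-effective) constants, since the paper's $\delta,\tau$ already depend on the local Ahlfors constant of $\mu$ through Corollary \ref{locallyAR}.
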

 \begin{proof} 
 Let $L$ be a best approximating $m$-plane for $\beta_{\mu}^{m}(B)$. In particular, for any $z \in \Sigma \cap B$ , 
 \begin{equation}\label{flatdelta} |z-\pi(z)|<\delta r(B) ,\end{equation}
 assuming that $\beta_{\mu}^{m}(B) \leq \delta$ for a $\delta$ to be chosen later.
 Denote $\pi_L$, the orthogonal projection onto $L$,  by $\pi$ and define the measure $\nu$ on $L$ to be:
 $$ \nu(A)=\mu(\pi^{-1}(A) \cap B) \quad \mbox{ for } A \subset L .$$
 We also denote the radius of $B$ by $r(B)$.
 Let $x \in \frac{1}{2} B \cap \widetilde{\Sigma}$, where $\widetilde{\Sigma}= \mbox{supp} \, \nu$.
On one hand, denoting by $B_L(x,r)$ the ball in $L$ i.e. $B(x,r) \cap L$, we have $B(x,r) \subset \pi^{-1}({B_L(x,r)}) \cap B$ when $r \leq \frac{r(B)}{2}$, and hence:
$$  c^{-1} r^{n} \leq \mu(B(x,r)) \leq \nu({B_L(x,r)}).$$
On the other hand, if $J$ is the maximal number of disjoint balls of radius $r$ that can be contained in $   \pi^{-1}({B_L(x,r)}) \cap 2B$,  then we claim that 
$$J \leq \omega_{m} (2r)^{m-d} {2r(B)}^{d-m}.$$ 
Indeed, assuming that $L$ is the $m$-plane 
$\left\lbrace (y_1, \ldots ,y_d); y_{m+1}=\ldots =y_d=0 \right\rbrace $ ,  the union $E$ of these $J$ balls is included in the cylinder  $C=\left\lbrace y; (y_1,\ldots ,y_m) \in B_{L}(x,r), y_j \in (-2r(B),2r(B)), j>m \right\rbrace$.
Thus
\begin{align*}
J. \mathcal{L}^{d}(B(0,r))  & \leq \mathcal{L}^{d}(\mathcal{C}), \\ &= \omega_{m} r^{m}{2r(B)}^{d-m}. \end{align*}
Let $\left\lbrace B_i \right\rbrace$ be a disjoint collection of balls of radius $r$ such that $B_i \subset \pi^{-1}({B_L(x,r)}) \cap 2B  \subset \cup 5B_i $, obtained by a Vitali covering. In particular, by the above argument, there are at most $J$ of them. Then, if $\delta r(B) \leq r \leq r(B)$, where $\delta$ is to be chosen later, we have:
$$ \nu({B_L(x,r)})\leq \mu(\cup 5 B_i) \leq J.c.(r)^{n} \leq  (2r)^{m-d} \omega_{m} r(B)^{d-m}  (2r)^{n} \leq \frac{c}{\delta^{d-m}} r^n. $$

Hence, letting $C=\frac{c}{\delta^{d-m}}$, we have:
$$ C^{-1} r^{n} \leq  \nu(B(x,r)) \leq C r^{n}, \quad \mbox{when} \; x \in \frac{1}{2} B \cap \widetilde{\Sigma},\; \delta r(B) \leq r \leq r(B).$$
We first claim that for all $z_0 \in \frac{1}{2}B \cap \widetilde{\Sigma}$, and $r_0, r$ with $\delta^{\frac{1}{2}}r(B) \leq r_0 \leq r \leq r(B)$, if $\delta$ is small enough, then:
\begin{equation} \label{Rieszestimate}  \left| \frac{x-z_0}{r_0} R_{r_0,r}\nu(z_0) \right| \leq c, \quad \mbox{for} \; x\in \widetilde{\Sigma} \cap B(z_0,r_0).
\end{equation}
Let us finish the proof of the lemma before proving $\eqref{Rieszestimate}$.
Choose $\epsilon>0$. Let $\kappa_0=\kappa_0(\epsilon)$ be as in Lemma $\ref{bigRiesz}$. Let $\delta$ be small enough for $\eqref{Rieszestimate}$ to hold,  $\delta^{\frac{1}{2}} \leq \kappa_0$ and $\delta^{\frac{1}{2}} \leq \frac{\epsilon}{2}$.
 Identify $L$ with $\mathbb{R}^{m}$. Since $\nu$ satisfies $\eqref{kindareg}$ but not  $\eqref{bounded}$, then $\eqref{notflat}$ cannot hold. Namely, there must exist a ball $B'$ centered in $\widetilde{\Sigma}$, and $r(B') \geq \delta^{\frac{1}{2}} r(B)$, such that : $\beta_{\nu}^{m-1}(B') \leq\frac{1}{2}\epsilon$. We also have  $B' \subset \frac{1}{2}B$ by the same argument as in the proof of Theorem $\ref{bigRiesz}$ in $\cite{T}$.
Let $L'$ be a best approximating $(m-1)$-plane for $\beta_{\nu}^{m-1}(B')$.
We claim that the following holds:
 \begin{equation} \label{nhoodL'} \Sigma \cap B' \subset U_{\delta r(B)+\frac{\epsilon r(B')}{2}}(L') ,
 \end{equation} 
 where $ U_{\delta r(B)+\frac{\epsilon r(B')}{2}}(L')$ denotes the $(\delta r(B)+\frac{\epsilon r(B')}{2})$-neighborhood of $L'$.
Indeed , we have: \begin{equation}\label{betaL'} \beta_{\nu}^{m-1}(B') \leq \frac{\epsilon}{2}. 
\end{equation}
Suppose $z \in \Sigma \cap B'$. Then $z=\pi(z) + \pi_{L^{\bot}}(z)$, where $\pi$ and $\pi_{L^{\bot}}$ are the projections onto $L$ and $L^{\bot}$ respectively.
Since $\Sigma \cap B' \subset \Sigma \cap B$ and $\beta_{\mu}^{m}(B) \leq \delta$, we get: $\left| \pi_{L^{\bot}}(z) \right| \leq \delta r(B)$.
Moreover, by ($\ref{betaL'}$), $\mbox{dist}(\pi(z),L') \leq \frac{\epsilon}{2}r(B')$.
Thus, for all $z \in \Sigma \cap B'$,  \begin{align*}
dist(z,L') &\leq |z-\pi(z)|+ dist(\pi(z),L'), \\
& \leq  \delta r(B) + \frac{\epsilon}{2}r(B').
\end{align*}
Therefore:
$$\beta_{\mu}^{m-1}(B') \leq \frac{\delta r(B)+ \frac{\epsilon}{2}r(B')}{r(B')} \leq \delta^{\frac{1}{2}}+\frac{\epsilon}{2} \leq \epsilon. $$

We now prove $\eqref{Rieszestimate}$. Pick $z_0 \in \frac{1}{2} B \cap \widetilde{\Sigma} , \, \, r_0,r \, \mbox{such that  } \delta^{\frac{1}{2}} r(B) \leq r_0 < r \leq r(B) $. Choose any $x \in \widetilde{\Sigma} \cap B(z_0, r_0)$, and let $z_1, x_1 \in \Sigma \cap B$ be such that: $\pi(z_1)= z_0,  \, \pi(x_1)=x$.
Then:
\begin{equation}\label{ABC} 
 \left| \frac{x-z_0}{r_0} R_{r_0,r}\nu(z_0) \right|  \leq A + B + C . 
\end{equation}

where \begin{equation}\label{Aeq} A = \left| \frac{x-z_0}{r_0} \cdot \left( R_{r_0,r}\nu(z_0)-  R_{r_0,r}\mu(z_1) \right) \right|, \end{equation}
\begin{equation}\label{Beq} B =  \left| \frac{(x-z_0)-(x_1-z_1)}{r_0} \cdot R_{r_0,r}\mu(z_1) \right| \end{equation} and
\begin{equation}\label{Ceq} C=\left| \frac{x_1 - z_1}{r_0} \cdot R_{r_0,r}\mu(z_1) \right| . \end{equation}

Let us first estimate ($\ref{Aeq}$). We denote the kernel of the Riesz transform by $K$, and the annulus in $\mathbb{R}^d$ by $A(z_0,r_0,r)= \{y \in \mathbb{R}^d; r_{0} < |y-z_0| \leq r\}$, the annulus in $L$ by $A_L(z_0,r_0,r)$.
Then, we can write:
$$R_{r_0,r} \nu(z_0) = \int_{B \cap \pi^{-1}(A_{L}(z_0,r_0,r))} K(z_0-\pi(y)) d\mu(y). $$
And: 

\begin{align}\label{S1S2}
\left| R_{r_0,r} \nu (z_0) - R_{r_0,r} \mu(z_1) \right| & = \left| \int_{B \cap \pi^{-1}(A_{L}(z_0,r_0,r))} K(z_0-\pi(y)) d\mu(y) - \int_{A(z_1,r_0,r)} K(z_1-y) d\mu(y)\right| \nonumber \\ & \leq S_1 + S_2 . 
\end{align} 
 
 where $$S_1:=\int_{B \cap \pi^{-1}(A_{L}(z_0,r_0,r))} |K(z_0-\pi(y))-K(z_1-y)| d\mu(y), $$
and
$$S_2:=\left| \int_{B \cap \pi^{-1}(A_{L}(z_0,r_0,r))} K(z_1-y) d\mu(y)-\int_{A(z_1,r_0,r)} K(z_1-y) d\mu(y)\right|. $$
\\Estimating $S_1$ from ($\ref{S1S2}$):

To estimate $S_1$, we need the following  intermediate estimates.
First, note that if $y \in B \cap \pi^{-1}(A(z_0, r_0, r)) \cap \Sigma$, it follows from the fact that $ |z_0-y|^2=|z_0-\pi(y)|^2+|\pi(y)-y|^2$ and $\pi(y) \in A(z_0, r_0, r)$ that the following holds \begin{equation}\label{pyth} |z_0-y| \geq |z_0-\pi(y)| \geq  r_0 \end{equation}

Second, by $\eqref{flatdelta}$, \begin{equation}\label{estimateS1} |(z_0-\pi(y))-(z_1-y)| \leq |z_0-z_1| + |y-\pi(y)| \leq 2 \delta r(B). 
\end{equation}
Similarly, we claim that : \begin{equation}\label{estimateS1'} \frac{1}{2}|z_0-y| \leq |z_0 - \pi(y)| \leq 2 |z_0-y| \; \mbox{and} \; \frac{1}{2} |z_0-y| \leq |z_1-y| \leq 2 |z_0 - y|. \end{equation}
Indeed, on one hand assuming $\delta$ small enough that $\delta r(B) \leq \frac{1}{2} r_0$,  \begin{align*}|z_0-y| & \leq |z_0 - \pi(y)| + |\pi(y)-y|,\\
&\leq |z_0 - \pi(y)| +\delta r(B) , \\
&\leq |z_0 - \pi(y)|+\frac{1}{2} r_0, \\
&\leq 2 |z_0 - \pi(y)|, \mbox{ by } \eqref{pyth} .
\end{align*}
On the other hand: \begin{align*}
 |z_0-\pi(y)| &\leq |z_0-y|+|y-\pi (y)|, \\
 & \leq |z_0-y|+ \frac{1}{2} r_0, \\ & \leq 2 |z_0-y|, \mbox{ by } \eqref{flatdelta}.
\end{align*}
The other estimate in $\eqref{estimateS1'}$ follows similarly. On one hand,
\begin{align*}
|z_0-y| &\leq |z_0 - z_1| + |z_1 - y|, \\ &\leq \delta r(B)+ |z_1-y|, \\ & \leq |z_0 - \pi(y)|+|z_1 - y|, \\ & \leq 2 |z_1-y|, \mbox{ since } z_0=\pi (z_1).
\end{align*}
On the other hand, 
\begin{align*}
|z_1 - y| & \leq |z_1-z_0| + |z_0 - y|, \\
& \leq \delta r(b)+ |z_0 - y|, \\
& \leq 2 |z_0 - y|.
\end{align*}

Thus, to estimate $S_1$, noting that:

\begin{align} \left| K(z_0-\pi(y)) - K(z_1 - y) \right| &\leq \left| \frac{z_0-\pi(y)}{|z_0-\pi(y)|^{n+1}} - \frac{z_1-y}{|z_0-\pi(y)|^{n+1}} \right| + \left| \frac{z_1-y}{|z_0-\pi(y)|^{n+1}} - \frac{z_1-y}{|z_1-y|^{n+1}} \right| \nonumber\\ & =: D_1 + D_2, \end{align}

one obtains on one hand, using ($\ref{pyth}$) and ($\ref{estimateS1}$), \begin{equation}\label{D1}D_1= \frac{|z_0-\pi(y)-(z_1-y)| }{|z_0-\pi(y)|^{n+1}} \leq \frac{2^{n+1} C \delta r(B)}{|z_0-y|^{n+1}}, \end{equation}
and on the other hand, using ($\ref{estimateS1}$) and ($\ref{estimateS1'}$), \begin{equation}\label{D2}\begin{aligned} D_2 &=|z_1-y| \left| \frac{ |z_1-y|^{n+1} - |z_0-\pi(y)|^{n+1}}{|z_1-y|^{n+1} |z_0-\pi(y)|^{n+1}}\right| \\ & = |z_1-y| \frac{\left||z_1-y| - |z_0-\pi(y)|\right|}{|z_1-y|^{n+1} |z_0-\pi(y)|^{n+1}} \cdot \Sigma_{0}^{n}|z_1-y|^{n-j} |z_0-y|^{j} \\ &\leq C_n \frac{|z_0-y|^{n+1}}{|z_0-y|^{2n+2}}. \left||z_1-y| - |z_0-\pi(y)|\right| \\ & \leq C_n \frac{\delta r(B)}{|z_0-y|^{n+1}} . \end{aligned}. \end{equation}
Putting ($\ref{D1}$) and ($\ref{D2}$) together gives: 
\begin{equation}\label{S1}\begin{aligned} S_1 & \leq \int_{B \cap \pi^{-1}(A_{L}(z_0,r_0,r))} \frac{c \delta r(B)}{|z_0-y|^{n+1}} d\mu(y) \\ & \leq c \int_{\frac{1}{2} r_0 \leq |y-z_0| \leq 2r} \frac{\delta r(B)}{|z_0-y|^{n+1}} d\mu(y) \\
& \leq c\delta r(B) \int_{\frac{1}{{2r}^{n+1}}}^{(\frac{2}{r_{0}})^{n+1}} \mu\left( \left\lbrace y, \frac{1}{|z_0 - y|^{n+1}}>t \right\rbrace \right) dt, \\
&\leq c\delta r(B) \int_{\frac{1}{{2r}^{n+1}}}^{(\frac{2}{r_{0}})^{n+1}} \mu(B(z_0,t^{-\frac{1}{n+1}})) dt, \\ & \leq c\delta r(B) \int_{\frac{1}{{2r}^{n+1}}}^{(\frac{2}{r_{0}})^{n+1}} \frac{1}{t^{\frac{n}{n+1}}} dt  \\ &\leq \frac{C' \delta r(B)}{r_0} \leq 1\mbox{  by choosing } {\delta}^{\frac{1}{2}} < {C'}^{-1} . \end{aligned}\end{equation}
 
Estimating $S_2$ from ($\ref{S1S2}$):
 
 $$\begin{aligned} S_2 &= \left| \int_{B \cap \pi^{-1}(A_{L}(z_0,r_0,r))} K(z_1-y) d\mu(y) - \int_{A(z_1, r_0, r)} K(z_1-y) d\mu(y) \right| \\ &\leq \int_{B \cap ( \pi^{-1}(A_{L}(z_0,r_0,r)) \triangle A(z_1, r_0,r))} |K(z_1-y)| d\mu(y)\end{aligned}$$
 
Now, we claim that for $\delta>0$ small enough, we have: \begin{equation} \label{annulus} \Sigma \cap B \cap (\pi^{-1}(A_{L}(z_0,r_0,r)) \triangle A(z_1,r_0,r)) \subset A(z_1, \frac{1}{2} r_0, 2r_0) \cup A(z_1,\frac{1}{2}r,2r). 
\end{equation}
 We will only treat the case where $ \pi(y) \in A(z_0, r_0, r)$ and $y \notin A(z_1, r_0,r)$. The other case follows in exactly the same manner.
First, note that in the above case, either $|y-z_1| \leq r_0 $, implying in particular that $ |y-z_1| \leq 2r_0$.
Moreover, for such a $y$, 
\begin{align*}
|y-z_1| &\geq |\pi(y)-z_0| - |y- \pi(y)| - | z_1-z_0|, \\ & \geq r_0 - 2 \delta r(B) , \mbox{ by } \eqref{flatdelta}, \\
& \geq \frac{1}{2} r_0, 
\end{align*}  and hence, $y \in A(z_1, \frac{1}{2} r_0 , 2r_0)$.
Otherwise, $|y-z_1| > r$ (a fortiori, $|y-z_1| > \frac{1}{2} r$) and \begin{align*} |y-z_1| &\leq |y- \pi(y)| + |\pi(y)-z_0|+|z_1-z_0|, \\
& \leq 2 \delta r(B) + r \leq 2r \end{align*} implying $y \in A(z_1, \frac{1}{2} r, 2r)$.
Hence, $$ \Sigma \cap B \cap( \pi^{-1}(A_{L}(z_0, r_0, r))\cap {A(z_1, r_0,r)}^C ) \subset A(z_1, \frac{1}{2} r_0, 2r_0) \cup A(z_1,\frac{1}{2}r,2r).$$ 

Using $\eqref{annulus}$, we obtain 
 \begin{equation}\label{S2}\begin{aligned} S_2 & \leq \int_{ A(z_1, \frac{1}{2} r_0, 2r_0)} \frac{1}{|z_1 - y|^{n}} d\mu(y) + \int_{ A(z_1,\frac{1}{2}r,2r)}  \frac{1}{|z_1 - y|^{n}} d\mu(y) \\ & \leq 2^n \frac{ \mu(B(z_1, 2r_0))}{r_0^n} + 2^n \frac{\mu(B(z_1, 2r))}{r^n} \\ &\leq C_n. \end{aligned} \end{equation} 

The estimates ($\ref{S1}$) and ($\ref{S2}$) combined give:

\begin{equation} \label{A} \left| \frac{x-z_0}{r_0} \cdot (R_{r_0,r} \nu (z_0) - R_{r_0,r} \mu(z_1)) \right|  \leq |\frac{x-z_0}{r_0}| \cdot C \leq C'. \end{equation}

 Let us now estimate ($\ref{Beq}$):
first note that \begin{align*} | (x-z_0)-(x_1-z_1)|  &\leq |x-x_1| + |z_0-z_1|, \\ &\leq 2 \delta r(B) , \mbox{ by } \eqref{flatdelta}.
\end{align*}

Moreover,\begin{align*}
 |R_{r_0,r} \mu(z_1)| &\leq \int_{r_0 < |y-z_1| \leq r} \frac{1}{|y-z_1|^{n}} d\mu(y) \\ & \leq  \int_{r^{-n}}^{r_0^{-n}} \mu(\{y: |K(y-z_1)|> t^{-n}\})dt \\ & \leq  \int_{r^{-n}}^{r_0^{-n}} \frac{1}{t} dt \\ & \leq C \log\left(\frac{r(B)}{r_0}\right) \\ &\leq C' |\log(\delta)|. \end{align*}
Therefore, assuming $\delta$ is small enough:
 \begin{align}\label{B} \left| \frac{(x-z_0)-(x_1-z_1)}{r_0} \cdot R_{r_0,r} \mu(z_0) \right| & \leq C \frac{\delta r(B)}{r_0} |\log{\delta}| \nonumber \\ & \leq C \delta^{\frac{1}{2}} |\log{\delta}| \leq C \end{align}

Finally, we estimate ($\ref{Ceq}$):
we want to apply Lemma $\ref{smallRiesz}$ to evaluate $\left| \frac{x_1-z_1}{r_0} \cdot R_{r_0,r} \mu(z_1) \right|$. But we do not have $x_1 \in B(z_1,r_0)$. Nevertheless, we have:
\begin{equation}\label{r0}|x_1-z_1| \leq |x_0-z_0|+|z_0-z_1|+|x_0-x_1| \leq |x_0-z_0|+2 \delta r(B) \leq 2r_0.\end{equation}
Using ($\ref{r0}$), and applying Lemma $\ref{smallRiesz}$ to the first term, we have:
\begin{align}\label{almostsmall} \left| \frac{x_1-z_1}{r_0} \cdot R_{r_0,r} \mu(z_1) \right| & \leq 2 \left| \frac{x_1-z_1}{2r_0} \cdot  R_{2r_0,r} \mu(z_1) \right| + \left|\frac{x_1-z_1}{r_0} \cdot R_{r_0,2r_0} \mu(z_1) \right|\nonumber \\& \leq 2c + \left| \frac{x_1-z_1}{r_0} \right| \cdot \left|R_{r_0,2r_0}\mu(z_1) \right|. \end{align} 
To estimate the second term on the right hand side of the inequality in $\eqref{almostsmall}$, we simply notice that: \begin{equation} \label{C}|R_{r_0,2r_0} \mu(z_1)| \leq r_0^{-n} \mu(B(z_1,2r_0)) \leq \tilde{c}.\end{equation}
This implies the uniform boundedness of $C$.

Combining our estimates in ($\ref{A}$), ($\ref{B}$), ($\ref{almostsmall}$), and ($\ref{C}$), we have proven the claim we had set out to prove: namely, that for all $z_0 \in \frac{1}{2}B \cap \widetilde{\Sigma}$, and $r_0, r$ with $\delta^{\frac{1}{2}}r(B) \leq r_0 \leq r \leq r(B)$, if $\delta$ is small enough, 
\begin{equation}\label{estimate2} \left| \frac{x-z_0}{r_0} \cdot R_{r_0,r}\nu(z_0) \right| \leq c, \quad \mbox{for} \; x\in \widetilde{\Sigma} \cap B(z_0,r_0).
\end{equation} 

  \end{proof}     
  \begin{theorem}\label{bigballs}
  Let $\mu$ be a uniformly distributed measure with $n=dim_{0}\mu$, $K$ a compact set. For every $\epsilon>0$ , there exists some $\tau>0$ such that every ball $B$ centered in $\Sigma$ and contained in $K$, contains another ball $B'$ also centered in $\Sigma$, which satisfies $\beta_{\mu}^{n}(B') \leq \epsilon$, and $r(B') \geq \tau r(B)$. Moreover, $\tau$ only depends on $\epsilon$, $K$, $n$ and $d$.
  \end{theorem}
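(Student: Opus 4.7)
The plan is to prove Theorem \ref{bigballs} by iterating Lemma \ref{inductflat} exactly $d - n$ times, peeling off one dimension at each step, starting from the trivial observation that $\beta_\mu^d(B) = 0$ since any ball in $\RR^d$ is contained in the only $d$-plane, namely $\RR^d$ itself.

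First, I would fix the tolerance parameters by backward induction on the dimension. Set $\epsilon_n := \epsilon$. For each $m$ with $n < m \leq d$, let $\delta_m = \delta(\epsilon_{m-1}, K, n, d)$ and $\tau_m = \tau(\epsilon_{m-1}, K, n, d)$ be the constants produced by Lemma \ref{inductflat} when that lemma is applied with target flatness $\epsilon_{m-1}$ at ambient dimension $m$, and define $\epsilon_m := \delta_m$. This choice is arranged precisely so that the output of one application of the lemma (flatness $\leq \epsilon_{m-1}$ for some $(m-1)$-plane) is exactly the hypothesis needed to run the lemma one dimension lower. All $\epsilon_m$, $\delta_m$, $\tau_m$ depend only on $\epsilon$, $K$, $n$, $d$.

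Second, I would iterate. Setting $B_d := B$, we have $\beta_\mu^d(B_d) = 0 \leq \delta_d$, and $B_d$ is centered in $\Sigma \cap K$ with $r(B_d) \leq \mathrm{diam}(K)$. Inductively, given $B_m \subset B$ centered in $\Sigma \cap K$ with $\beta_\mu^m(B_m) \leq \delta_m$ and $r(B_m) \leq \mathrm{diam}(K)$, Lemma \ref{inductflat} (which requires $n < m \leq d$, exactly our range) produces a ball $B_{m-1} \subset B_m$ centered in $\Sigma$ with $\beta_\mu^{m-1}(B_{m-1}) \leq \epsilon_{m-1} = \delta_{m-1}$ (or $= \epsilon$ at the last step) and $r(B_{m-1}) \geq \tau_m \, r(B_m)$. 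The containment $B_{m-1} \subset B \subset K$ keeps the center of $B_{m-1}$ in $\Sigma \cap K$, so the hypotheses of Lemma \ref{inductflat} remain in force at the next step.

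Third, after $d - n$ steps I arrive at $B_n \subset B$ centered in $\Sigma$ with $\beta_\mu^n(B_n) \leq \epsilon$ and
\[
r(B_n) \;\geq\; \Bigl(\prod_{m=n+1}^{d} \tau_m\Bigr) r(B) \;=:\; \tau \, r(B),
\]
where $\tau$ depends only on $\epsilon$, $K$, $n$, $d$. Taking $B' := B_n$ completes the proof. The only substantive content is Lemma \ref{inductflat}; the present theorem is essentially a bookkeeping iteration, and the main (minor) obstacle is simply organizing the choice of $\epsilon_m$'s backwards so that each application's hypothesis matches the previous application's conclusion.
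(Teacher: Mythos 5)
Your proposal is correct and takes essentially the same approach as the paper: iterate Lemma~\ref{inductflat} exactly $d-n$ times starting from $\beta_\mu^d(B)=0$, choosing the tolerance thresholds by backward induction so that each application's conclusion is precisely the next application's hypothesis. Your write-up is simply a more carefully organized version of the paper's brief argument, which likewise invokes $\beta_\mu^{(d)}(B)=0$ and ``makes the successive $\epsilon_i$'s as small as needed'' over the finitely many steps.
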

  \begin{proof}
  We just apply Lemma $\ref{inductflat}$ $(d-n)$ times. Indeed, since $\beta_{\mu}^{(d)}(B)=0$, there exists $B_1 \subset B$, $\beta_{\mu}^{(d-1)}(B_1) \leq \epsilon_1$, $r(B_1) \sim r(B)$. By induction, we get a ball $B_{d-n} \subset B$, $r(B_{d-n}) \sim r(B)$, $\beta_{\mu}^{(n)}(B_{d-n}) \leq \epsilon_{d-n}$. Making the successive $\epsilon_i$'s as small as needed, since there are only finitely many steps, letting $\epsilon=\epsilon_{d-n}$, and $B'=B_{d-n}$, we get $\epsilon >0$, $B' \subset B$, $r(B') \sim r(B)$ such that: $\beta_{\mu}^{(n)}(B') \leq \epsilon$. \end{proof}
 
\newpage
 \section{Stability of the $\beta$-numbers}
In the following section, we will write $\beta(B)$ for $\beta^{n}(B)$.

 \begin{lemma} \label{betasupport}
  Let $\mu$ be a Radon measure, $\Sigma$ its support. Let $\mu_{x,r}$  be the following measure (and $\Sigma_{x,r}$ its support):
$$ \mu_{x,r}(A)=\mu(rA+x).$$
Then we have:
 \begin{equation}\label{trans} b\beta_{\mu_{x,r}}(B)=b\beta_{\mu}(rB+x) 
 \end{equation}
 \end{lemma}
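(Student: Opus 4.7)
The plan is to observe that $\mu_{x,r}$ is precisely the push-forward of $\mu$ under the affine map $T(y) = (y-x)/r$, since by definition $\mu_{x,r}(A)=\mu(rA+x)=\mu(T^{-1}(A))$. Consequently the support transforms the same way: $\Sigma_{x,r}=T(\Sigma)=(\Sigma-x)/r$. The map $T$ is a bijection of $\mathbb{R}^d$ onto itself which scales all distances by the factor $1/r$, sends $n$-planes bijectively onto $n$-planes (via $L \leftrightarrow L'=rL+x$), and satisfies $T^{-1}(B(y,s))=B(ry+x,rs)=rB+x$. In particular, $T$ induces a bijection $\Sigma_{x,r}\cap B \leftrightarrow \Sigma\cap(rB+x)$ and $L\cap B \leftrightarrow L'\cap(rB+x)$.

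With these identifications in hand, the proof becomes a direct change of variables in the definition of the bilateral beta number. Writing $B=B(y,s)$ so that $rB+x=B(ry+x,rs)$, for any $n$-plane $L$ and its image $L'=rL+x$ one has, for $z\in\Sigma_{x,r}\cap B$ with $w=rz+x\in\Sigma\cap(rB+x)$,
\[
\frac{\mathrm{dist}(z,L)}{s} \;=\; \frac{(1/r)\,\mathrm{dist}(w,L')}{s} \;=\; \frac{\mathrm{dist}(w,L')}{rs},
\]
and analogously, for $p\in L\cap B$ with $q=rp+x\in L'\cap(rB+x)$,
\[
\frac{\mathrm{dist}(p,\Sigma_{x,r})}{s} \;=\; \frac{\mathrm{dist}(q,\Sigma)}{rs}.
\]
Taking suprema over $\Sigma_{x,r}\cap B$ and $L\cap B$ respectively, and then infimum over all $n$-planes $L$ (which ranges exactly over all $n$-planes $L'$ as $L$ does), yields $b\beta_{\mu_{x,r}}(B)=b\beta_\mu(rB+x)$.

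There is no real obstacle here: the statement is essentially an exercise in unwinding definitions, with the one bookkeeping point being that the ratio of the two radii $rs/s=r$ cancels exactly with the scaling factor $1/r$ coming from $T$. The analogous identity for the one-sided $\beta$-number would be proven by the same calculation, omitting the second supremum.
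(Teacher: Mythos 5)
Your proof is correct and follows essentially the same route as the paper: identify $\mu_{x,r}$ as the push-forward under the affine map $T(y)=(y-x)/r$, observe that supports, $n$-planes, and distances all transform covariantly under $T$, and conclude by a change of variables in the definition of $b\beta$. You are slightly more careful than the paper in recording the plane correspondence as $L\leftrightarrow rL+x$ (the paper writes $L+x$, which is a minor slip, harmless since the infimum ranges over all $n$-planes) and in making the cancellation between the radius ratio and the $1/r$ scaling explicit; otherwise the arguments coincide.
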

 \begin{proof}
 
 It is easily seen that: $\Sigma=r\Sigma_{x,r}+x$.
 Let $L$ be an $n$-plane.
 
  Let $y \in \Sigma_{x,r} \cap B$. Then $y'=ry+x$ is in $\Sigma \cap B_{x,r}$ and $\mbox{dist}(y,L)=\frac{1}{r} \mbox{dist}(y',L+x)$. Hence, \begin{equation}\label{sigma1} \sup_{\Sigma_{x,r} \cap B} \mbox{dist}(y,L)= \frac{1}{r} \sup_{\Sigma \cap B_{x,r}} \mbox{dist} (y',L+x). \end{equation}
  
  On the other hand, let $p \in L \cap B$. Then $\mbox{dist}(p,\Sigma_{x,r})=\frac{1}{r}\mbox{dist}(rp+x,\Sigma)$, where $rp+x=p'$, $p' \in (L+x)\cap B_{x,r}$. Hence,
  \begin{equation}\label{sigma2} \sup_{L \cap B} \mbox{dist}(p,\Sigma_{x,r})=\sup_{L+x \cap B_{x,r}} \mbox{dist}(p',\Sigma). \end{equation}
  
  Adding ($\ref{sigma1}$) and ($\ref{sigma2}$), and taking the infimum over all $n$-planes proves ($\ref{trans}$).
 \end{proof}
 
 We can now prove the following theorem. It states that the flatness of $\mu$ on a fixed number of bigger scales than $B$ implies flatness at scale $B$. 
 
 \begin{theorem}\label{stability1}
 Let $\mu$ be a Radon measure on $\mathbb{R}^d$ that is doubling and $n$-asymptotically optimally doubling, and $K$ a compact set in $\mathbb{R}^d$.
      Let $\epsilon >0$, and $\delta_{0}$ be $\tau_0$ from Theorem $\ref{flat}$.                         
\\ There exists an integer $N >0$, depending only on the measure $\mu$, $\epsilon$,  $d$ and $K$ such that for every ball $B$ centered in $\Sigma \cap K$, if
$$2^N B \subset K, \; \beta_{\mu} (2^{k} B) \leq \frac{\delta_{0}}{4},\; 1 \leq k \leq N $$
then 
$$b\beta_{\mu}(B) \leq \epsilon .$$
\end{theorem}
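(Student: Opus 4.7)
The plan is to argue by contradiction, exploiting the compactness of $K$ and the fact (Theorem \ref{pseudounif}) that under the hypotheses every pseudo-tangent of $\mu$ is $n$-uniform. Assume the conclusion fails for some $\epsilon>0$. Then I can find sequences of integers $N_j\to\infty$ and balls $B_j=B(x_j,r_j)$, with $x_j\in\Sigma\cap K$ and $2^{N_j}B_j\subset K$, satisfying $\beta_\mu(2^kB_j)\le\delta_0/4$ for $1\le k\le N_j$ but $b\beta_\mu(B_j)>\epsilon$. Since $2^{N_j}r_j$ is bounded by $\mathrm{diam}(K)$, the radii $r_j$ must tend to $0$, and after extracting a subsequence $x_j\to x_\infty\in\Sigma\cap K$.

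Next I would blow up around $x_j$: set $\mu_j:=r_j^{-n}T_{x_j,r_j}\sharp\mu$. Since $\mu$ is doubling with constant independent of scale, the $\mu_j$ share a common doubling constant, and the asymptotic optimal doubling together with Corollary \ref{locallyAR}-type estimates gives uniform mass bounds on compact sets, so a subsequence converges weakly to a nonzero Radon measure $\nu$, a pseudo-tangent of $\mu$ at $x_\infty$. By Theorem \ref{pseudounif}, $\nu$ is $n$-uniform, and since $0\in\Sigma_{\mu_j}$ for every $j$, Lemma \ref{weakconv1} yields $0\in\Sigma_\nu$. The hypothesis translates by scaling into $\beta_{\mu_j}(2^kB(0,1))=\beta_\mu(2^kB_j)\le\delta_0/4$ for $k\le N_j$, so fixing any $k$ and applying Theorem \ref{weakconv2} gives
\begin{equation*}
\beta_\nu(2^{k-1}B(0,1))\le 2\liminf_j\beta_{\mu_j}(2^kB(0,1))\le\delta_0/2.
\end{equation*}
Thus $\nu$ is nearly flat at every dyadic scale above $1$.

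Let $\lambda$ denote the tangent of $\nu$ at infinity, which exists and is $n$-uniform because $\nu$ is $n$-uniform (Preiss). Since $R^{-n}T_{0,R}\sharp\nu\rightharpoonup\lambda$ as $R\to\infty$ and $\beta$ is scale-invariant, another application of Theorem \ref{weakconv2} along the sequence $R=2^{k}$ gives $\beta_\lambda(B(0,1))\le 2\liminf_k\beta_\nu(B(0,2^{k+1}))\le\delta_0=\tau_0$. Theorem \ref{flat} therefore forces $\nu$ to be $n$-flat, so $\Sigma_\nu$ is an $n$-plane through $0$ and $b\beta_\nu(2B(0,1))=0$. On the other hand, $b\beta_{\mu_j}(B(0,1))=b\beta_\mu(B_j)>\epsilon$, and Theorem \ref{weakconv2} gives $b\beta_\nu(2B(0,1))\ge\tfrac12\limsup_j b\beta_{\mu_j}(B(0,1))\ge\epsilon/2>0$, a contradiction.

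The two delicate points are ensuring the pseudo-tangent $\nu$ is nonzero (which I expect to follow from combining the doubling of $\mu$ with $\mu(B(x_j,r_j))\sim r_j^n$ coming from asymptotic optimal doubling, so that $\mu_j(B(0,1))$ stays bounded above and below) and, more importantly, justifying the passage from ``$\beta_\nu(B(0,R))$ is small for every dyadic $R$'' to ``$\beta_\lambda(B(0,1))\le\tau_0$.'' The latter is essentially a diagonalization argument using Theorem \ref{weakconv2}, but it is the step where the choice $\delta_0/4$ in the hypothesis is used sharply to absorb the factor of $2$ coming from weak convergence, so I expect this to be the main technical obstacle in writing the proof out carefully.
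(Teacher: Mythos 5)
Your proof is correct and follows essentially the same route as the paper: blow up along the balls where the conclusion fails, identify the weak limit as an $n$-uniform pseudo-tangent via Theorem \ref{pseudounif}, propagate the $\beta$-smallness to the tangent at infinity using Theorem \ref{weakconv2}, and invoke Theorem \ref{flat} to force flatness and get a contradiction with the bilateral bound. The only cosmetic difference is that the paper normalizes by $\mu(B_j)$ rather than $r_j^{-n}$, which sidesteps your concern about the limit being nonzero (the normalized measures automatically satisfy $\mu_j(B(0,1))=1$); since the $\beta$-numbers depend only on supports, this choice does not affect the rest of the argument.
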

\begin{proof}
We argue by contradiction. Suppose there is no such $N$. Then, for every $j$, there exists a ball $B_j=B(x_j, r_j)$, $x_j \in K \cap \Sigma$, $2^j r_j \leq \mbox{diam}(K)$ such that: 
\begin{equation}\label{contra}\beta_{\mu} (2^{k} B_j) \leq \frac{\delta_{0}}{4}, 1 \leq k \leq j$$ but $$b\beta_{\mu}(B_j) \geq \epsilon \end{equation}
Note that $x_j \in \Sigma \cap K$, $2^j r_j \leq \mbox{diam}(K)$ imply that, passing to a subsequence, $r_j \to 0$ and $x_j \to x$, $x \in K$, as $j \to \infty$.
Now, let $\mu_j$ be the measure defined as: $$\mu_j(A) = \frac{\mu(r_j A + x_j)}{\mu(B_j)}.$$

There exists some subsequence of $\mu_j$ that converges weakly to a measure $\nu$ as $j \to \infty$. Indeed, for any ball $B(0,R)$, if $C$ is the doubling constant of $\mu$, and $t(R)=\frac{\log(R)}{\log(2)}$, then: $$ \mu_j(B(0,R)) = \frac{\mu(B(x_j,Rr_j)}{\mu(B(x_j,r_j)} \leq C^{t(R)}. $$
Therefore, $\sup_{j}(\mu_j(B(0,R))) \leq C^{t(R)}$, for every $R>0$.

Since $x_j$ converges to $x$ and $r_j$ to $0$, $\nu$ is a pseudo-tangent measure of $\mu$ at $x$, and is therefore $n$-uniform by Lemma $\ref{pseudounif}$ since $\mu$ is asymptotically doubling by hypothesis.
Moreover, since the $\mu_j$'s are doubling with the same constant, using Lemma $\ref{weakconv2}$ and $\eqref{contra}$:
$$ 2 b\beta_{\nu}(B(0,2))  \geq \limsup_{j \to \infty} b\beta{\mu_{j}}(B(0,1)) = \limsup_{j \to \infty} b \beta_{\mu}(B_j) \geq \epsilon. $$
On the other hand, for all $k \geq 0$, by ($\ref{contra}$), 
$$  \beta_{\nu}(B(0,2^{k-1})) \leq 2\liminf_{j \to \infty} \beta_{\mu_j}(B(0,2^{k})) = 2\liminf_{j \to \infty} \beta_{\mu}(2^{k} B_{j}) \leq \frac{\delta_{0}}{2}. $$

Let $\lambda$ be the tangent measure of $\nu$ at $\infty$. Define $\nu_k$ in the following manner:
$$ \nu_k(A)=\frac{\nu(2^{k} A)}{2^{nk}}.$$
Then $\nu_{k} \rightharpoonup \lambda$ and:
$$\beta_{\lambda}(B(0,1)) \leq 2\liminf_{k \to \infty}\beta_{\nu_{k}}(B(0,2))= 2\liminf_{k \to \infty} \beta_{\nu}(B(0,2^{k+1}) \leq \delta_{0}.$$
By Theorem $\ref{flat}$, this implies that $\nu$ is flat, contradicting $b\beta_{\nu}(B(0,1)) >\epsilon$. 
\end{proof}

\begin{corollary}\label{stability2}
Let $\eta > 0$,  $K$ compact set in $\mathbb{R}^d$, $\mu$ a measure that is doubling and asymptotically optimally doubling.
There is a constant  $\delta_{1} > 0$ satisfying the following property. If 
\begin{equation}
k \geq 0, \ B(x_0,r) \subset K, \ x_0 \in \Sigma \cap K, b\beta_{\mu}(B(x_0,r)) \leq \delta_{1} ,
\end{equation}
then:
$$b\beta_{\mu}(2^{-k}B(x_0,r))) \leq \eta . $$
\end{corollary}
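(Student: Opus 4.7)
The plan is to proceed by contradiction, following the pseudo-tangent/compactness scheme of Theorem~\ref{stability1}. Assume no such $\delta_1$ exists. Then for every $j\in\NN$ there exist a ball $B_j=B(x_j,r_j)\subset K$ with $x_j\in\Sigma\cap K$ and an integer $k_j\geq 0$ such that $\beta_\mu(B_j)\leq 1/j$ while $b\beta_\mu(B(x_j,s_j))>\eta$, where $s_j:=2^{-k_j}r_j$. Passing to a subsequence, we may assume $x_j\to x\in\Sigma\cap K$, $r_j\to r^*\in[0,\mathrm{diam}(K)]$, and $s_j\to s^*\in[0,r^*]$. The analysis splits according to whether $s^*>0$ or $s^*=0$.

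If $s^*>0$ then $k_j$ is eventually constant and $r_j\to r^*>0$. Since $\Sigma\cap B(x_j,r_j)$ is contained in an $(r_j/j)$-tube about some $n$-plane $L_j$, after extracting a subsequence of planes we obtain $\Sigma\cap B(x,r^*)\subset L$ for a limit plane $L$. For each $y\in\Sigma\cap B(x,r^*)$, Theorem~\ref{pseudounif} produces an $n$-uniform pseudo-tangent of $\mu$ at $y$ whose support lies in the translate $L-y$; a short verification using $n$-uniformity together with Lebesgue differentiation shows that any $n$-uniform measure in $\RR^d$ with support contained in an $n$-plane must be a constant multiple of $\mathcal H^n$ on that plane. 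Hence $\Sigma$ is locally dense in $L$ at each such $y$, a clopen argument in $L\cap B(x,r^*)$ gives $\Sigma\cap B(x,r^*)=L\cap B(x,r^*)$, and so $b\beta_\mu(B(x,s^*))=0$, contradicting $b\beta_\mu(B(x_j,s_j))>\eta$ by Lemma~\ref{weakconv1}.

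The main case is $s^*=0$. The rescalings $\nu_j(A):=\mu(s_jA+x_j)/\mu(B(x_j,s_j))$ converge along a subsequence to an $n$-uniform pseudo-tangent $\nu$ by Theorem~\ref{pseudounif}, and Theorem~\ref{weakconv2} yields $b\beta_\nu(B(0,2))\geq\eta/2>0$; to contradict this via Theorem~\ref{flat} it suffices to show the tangent measure $\lambda$ of $\nu$ at infinity satisfies $\beta_\lambda(B(0,1))\leq\delta_0$, whereupon $\nu$ is flat. The one-sided flatness at scale $r_j$ propagates downward: since $\Sigma\cap B(x_j,r_j)$ lies in an $(r_j/j)$-tube about $L_j$, one has $\beta_{\nu_j}(B(0,R))\leq 2^{k_j}/(jR)$ for every $R\leq 2^{k_j}$. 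When $M:=\liminf_j 2^{k_j}/j<\infty$, two applications of Theorem~\ref{weakconv2} (first for $\nu_j\rightharpoonup\nu$, then for the blow-down defining $\lambda$) give $\beta_\lambda(B(0,1))=0$ directly. When $M=\infty$, one instead introduces the intermediate scale $t_j:=r_j/(c_1 j)$ with $c_1>0$ a small fixed constant, which satisfies $s_j<t_j<r_j$ eventually; the pseudo-tangent $\nu'$ of $\mu$ at $x$ at scale $t_j$ satisfies $\beta_{\nu'}(B(0,R))\lesssim 1/R$ at every scale, hence is flat by Theorem~\ref{flat}, and since $\nu$ is a further blow-up of $\nu'$ at the origin with scale ratio $s_j/t_j\to 0$ and $\mathcal H^n\res L$ is scale-invariant, $\nu$ itself is flat. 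Either way we reach the desired contradiction.

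The main technical obstacle is the regime $M=\infty$, where the direct diagonal argument breaks down and the intermediate-scale extraction at $t_j$ is needed to reduce to Theorem~\ref{flat}; apart from this, the argument parallels the proof of Theorem~\ref{stability1} closely.
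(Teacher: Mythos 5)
Your approach is genuinely different from the paper's and, more importantly, the central step in the hard case does not go through.

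The paper's proof of Corollary~\ref{stability2} is a short ``sliding window'' induction that uses Theorem~\ref{stability1} as a black box: fix $\epsilon_0 = \min(\delta_0/4,\eta)$, take $N = N(\epsilon_0)$ from Theorem~\ref{stability1}, and set $\delta_1 = 2^{-N}\epsilon_0$. Then for $j \leq N$ one bounds $\beta_\mu(B_j)$ crudely by $2^j \delta_1 \leq \delta_0/4$, and for $j > N$ the inductive hypothesis $b\beta_\mu(B_m) \leq \epsilon_0 \leq \delta_0/4$ for $j-N \leq m \leq j-1$ (hence $\beta_\mu(B_m) \leq \delta_0/4$) feeds Theorem~\ref{stability1} to produce $b\beta_\mu(B_j) \leq \epsilon_0$. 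The whole point of the corollary is this bootstrapping of an already-proved theorem, not a fresh compactness argument; you have bypassed Theorem~\ref{stability1} entirely and are in effect re-proving a stronger statement from scratch, which is why you hit the obstruction you call $M=\infty$.

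The $M=\infty$ branch contains a genuine gap. You write that ``$\nu$ is a further blow-up of $\nu'$ at the origin with scale ratio $s_j/t_j\to 0$,'' and deduce $\nu$ flat from $\nu'$ flat. But $\nu$ and $\nu'$ are two \emph{simultaneous} weak limits, of $s_j$-rescalings and $t_j$-rescalings of $\mu$ at $x_j$ respectively, and there is no blow-up operation relating them. Weak convergence $\nu'_j \rightharpoonup \nu' = c\,\mathcal H^n \res L'$ gives no control on $\nu'_j$ at scales of order $\rho_j := s_j/t_j \to 0$, precisely the scales $\nu_j$ probes. A model counterexample: if $\nu'_j = c\mathcal H^n\res L' + c\mathcal H^n\res(L'+\rho_j v)$ for a fixed unit vector $v\perp L'$, then $\nu'_j \rightharpoonup 2c\mathcal H^n\res L'$ is flat, yet the $\rho_j$-rescalings converge to $\tfrac12 c(\mathcal H^n\res L' + \mathcal H^n\res (L'+v))$, which is not flat. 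Nothing in your hypotheses rules out an analogous fine structure in the actual $\nu'_j$. Concretely, knowing $\mathrm{dist}(z,L'+x_j) = o(t_j)$ for $z\in\Sigma\cap B(x_j,t_j)$ says nothing about whether $\mathrm{dist}(z,L'')=o(s_j)$ for some plane $L''$ when $s_j \ll t_j$; this is exactly the gap Theorem~\ref{stability1} is designed to close, with its hypothesis of flatness at $N$ \emph{consecutive} scales down to the scale of interest.

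There are also weaker spots elsewhere. When $k_j$ stays bounded (which falls under your $s^*>0$ case or the $M<\infty$ case with $k_j$ not tending to infinity), the estimate $\beta_{\nu_j}(B(0,R)) \leq 2^{k_j}/(jR)$ holds only for $R \leq 2^{k_j} \leq C$, so the two applications of Theorem~\ref{weakconv2} cannot be pushed to $R\to\infty$ and the claim $\beta_\lambda(B(0,1))=0$ is not obtained. Your $s^*>0$ analysis (pseudo-tangents at every $y$, Lebesgue differentiation, a clopen argument) is plausible but much more work than indicated, and it is not at all clear it closes without using an essentially equivalent version of Theorem~\ref{stability1}. The reliable route is the paper's: use Theorem~\ref{stability1} directly in a finite telescoping induction, choosing $\delta_1$ small enough relative to $2^{-N}$ that the base case is trivial and the inductive case is exactly the hypothesis of Theorem~\ref{stability1}.
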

\begin{proof}
Let $\delta_0$ be as in Lemma $\ref{stability1}$, $\epsilon_0=\min(\frac{\delta_0}{4},\eta)$. 
Consider $N=N(\epsilon_0)$ from Lemma $\ref{stability1}$.
Denote $B(x_0,r)$ by $B$.
Assume $k \geq N$. We prove by induction that: for $0 \leq j \leq k$, $B_j= 2^{-j}B$, $b\beta_{\mu}(B_j) \leq \epsilon_0$.
\\If $0 \leq j \leq N$, $$ b\beta_{\mu}(B_j) \leq \frac{r(B)}{r(B_{j})}b\beta_{\mu}(B) \leq 2^{N} \delta_{1} \leq \epsilon_{0},$$ if we assume $\delta_1\leq 2^{-N} \epsilon_{0}$.
\\If $j>N$, $b\beta_{\mu} (B_{j-k}) \leq \epsilon_{0} \leq \frac{\delta_{0}}{4}, k=1, \ldots, N$ implies by Lemma $\ref{stability1}$ that $b\beta_{\mu}(B_{j}) \leq \epsilon_0$. This ends the proof. \end{proof}

We can reformulate Theorem $\ref{stability1}$ and Corollary $\ref{stability2}$ in terms of dyadic cubes.

\begin{corollary}\label{stability1cubes}
Let $\mu$ be a Radon measure on $\RR^{d}$ that is doubling, $n$-asymptotically doubling, and locally Ahlfors-regular. Let $K$be  a compact set, and $\mathcal{D}^{\mu}$ the dyadic decomposition of $\mu$ described in Corollary $\ref{Dyadic3}$. Let $\epsilon>0$ and $\delta_0$ be $\tau_0$ from Theorem $\ref{flat}$.
There exists an integer $N>0$ depending only on $\epsilon$, $d$ and $K$ such that for every $R \in \mathcal{D}^{\mu}$, $R \subset K$, if:
$$ R^{(N)} \subset K, \mbox{ } \beta_{\mu}(R^{(k)}) \leq \frac{\delta_0}{4},\mbox{ } 1\leq k \leq N$$
where $R^{(k)}$ denotes the ancestor of $R$ of generation $k$, then $$b\beta_{\mu}(R) \leq \epsilon.$$
\end{corollary}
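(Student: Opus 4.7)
The plan is to adapt the blow-up argument of Theorem \ref{stability1} to the dyadic setting, rather than attempt a formal reduction to the ball version (which would require a delicate comparison of $\beta$-number constants between $B_{R^{(k)}}$ and $2^k B_R$).

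Assume for contradiction that no such $N$ exists. Then for each $j \geq 1$ there is a cube $R_j \in \mathcal{D}^\mu$ with $R_j^{(j)} \subset K$, satisfying $\beta_\mu(R_j^{(k)}) \leq \delta_0/4$ for all $1 \leq k \leq j$, yet $b\beta_\mu(R_j) > \epsilon$. Since $R_j^{(j)} \subset K$ and $\mathrm{diam}(R_j^{(j)}) \sim 2^j l(R_j)$, the side lengths $r_j := l(R_j)$ tend to $0$, and after passing to a subsequence the centers $z_j := z_{R_j}$ converge to some $z \in K$. I would then rescale $\mu_j := \mu(B(z_j, r_j))^{-1} T_{z_j, r_j} \sharp \mu$. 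Since $\mu$ is doubling with uniform constants, the $\mu_j$ have locally bounded mass, so a subsequence converges weakly to a non-zero Radon measure $\nu$. Because $z_j \to z$ and $r_j \to 0$, $\nu$ is a pseudo-tangent measure of $\mu$ at $z$ and is therefore $n$-uniform by Theorem \ref{pseudounif}.

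Next, I would transfer the $\beta$-number information from cubes to concentric balls at the limit. For each fixed $k \geq 0$, the ancestor ball $B_{R_j^{(k)}}$ has radius $3 \cdot 2^k r_j$ and its center $z_{R_j^{(k)}}$ lies at distance $\lesssim 2^k r_j$ from $z_j$ (since $z_j \in R_j \subset R_j^{(k)}$). After rescaling by $r_j$, this becomes a ball of radius $\sim 2^k$ centered within $\lesssim 2^k$ of the origin, so for a fixed universal constant $c$ it contains $B(0, c\cdot 2^k)$. Combining this inclusion with the weak convergence $\mu_j \rightharpoonup \nu$ and Theorem \ref{weakconv2}, I would obtain $\beta_\nu(B(0, c\cdot 2^{k-1})) \lesssim \beta_\mu(R_j^{(k)}) \leq \delta_0/4$ (up to absolute factors) for every fixed $k$. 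The same comparison applied to $R_j$ itself yields $b\beta_\nu(B(0,c')) \gtrsim \epsilon$ for some $c' > 0$.

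Finally, as in the proof of Theorem \ref{stability1}, I would pass to the tangent at infinity $\lambda$ of the $n$-uniform measure $\nu$. The uniform smallness of $\beta_\nu(B(0, 2^k))$ together with Theorem \ref{weakconv2} yields $\beta_\lambda(B(0,1)) \leq \delta_0$, so Theorem \ref{flat} forces $\nu$ to be flat, contradicting the positive lower bound on $b\beta_\nu$. The main obstacle is purely bookkeeping: because the center $z_{R_j^{(k)}}$ differs from $z_j$ by an amount comparable to the radius of the rescaled ball, the inequalities coming from Theorem \ref{weakconv2} pick up additional absolute constants at each scale. These can be absorbed by replacing the nominal threshold $\delta_0/4$ by a smaller absolute multiple of $\delta_0$ and enlarging $N$ accordingly, exactly mirroring the role of $\delta_0/4$ in Theorem \ref{stability1}.
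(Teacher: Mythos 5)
Your proof follows the same contradiction-and-blow-up strategy as the paper: extract a sequence of bad cubes $R_j$, rescale $\mu$ at the centers $z_j=z_{R_j}$ by the side lengths $l_j$, obtain an $n$-uniform pseudo-tangent $\nu$ via Theorem \ref{pseudounif}, pass to the tangent at infinity, and invoke Theorem \ref{flat} to get a contradiction. You are in fact more careful than the paper's one-line reduction, which merely notes $\beta_\mu(R_j)=\beta_\mu(B(z_j,3l_j))$ and leaves the offset between $z_j$ and the ancestor centers $z_{R_j^{(k)}}$ implicit; your explicit observation that the rescaled ancestor ball $B_{R_j^{(k)}}/l_j$ contains $B(0,c\cdot 2^k)$, at the cost of absolute constants that are absorbed into the threshold, is precisely the bookkeeping the paper suppresses.
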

\begin{proof}
We argue by contradiction. Suppose there is no such $N$. Then, for every $j$, if we denote the center $z_{R_j}$ by $z_j$ and $l(R_j)$ by $l_j$, there exists a dyadic cube $R_{j}$ such that  $z_j \in K$, $2^{j} l_j \leq diam(K)$, $R_j^{(j)} \subset K$, 
 and 
$$\beta_{\mu}(R_{j}^{(k)}) \leq \frac{\delta_0}{4}, \mbox{ } 1 \leq k \leq j$$
but $$b\beta(R_j) \geq \epsilon.$$
By compactness, there exists $z \in supp(\mu)$ such that $z_j \to z$ (without loss of generality by passing to a subsequence). Moreover, $l_j \to 0$.
Now let $\mu_j$ be the measure defined as:
\begin{equation}
\mu_{j}(A)= \frac{\mu(l_j A +z_j)}{\mu(B(z_j,3l_j))}.
\end{equation}

The rest of the proof follows in exactly the same manner as Theorem $\ref{stability1}$ since  $\beta_{\mu}(R_j)= \beta_{\mu}(B(z_j, 3l_j))$ by definition.
\end{proof}

\begin{corollary}\label{stability2cubes}
Let $\eta >0$, $K$ compact set in $\RR^{d}$, $\mu$ a measure that is doubling, asymptotically optimally doubling and locally Ahlfors-regular. There is a constant $\delta_1>0$ satisfying the following property. If 
\begin{equation}
k \geq 0 \;  R \in \mathcal{D}^{\mu} \; R \subset K \; \beta_{\mu}(R) \leq \delta_1, 
\end{equation}
then
\begin{equation}
b\beta_{\mu}(Q) \leq \eta , \mbox{ for all children } Q \mbox{ of } \mathcal{R}.
\end{equation}
\end{corollary}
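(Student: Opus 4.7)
The plan is to mirror the proof of Corollary \ref{stability2}, replacing balls by dyadic cubes throughout and using Corollary \ref{stability1cubes} in place of Theorem \ref{stability1}. Take $\delta_0=\tau_0$ from Theorem \ref{flat}, set $\epsilon_0=\min(\delta_0/4,\eta)$, and let $N=N(\epsilon_0,K,d)$ be the integer produced by Corollary \ref{stability1cubes}. I will choose $\delta_1\le c_0 2^{-N}\epsilon_0$, where $c_0$ is an absolute constant depending only on the dyadic parameters of Corollary \ref{Dyadic3}. I also read the hypothesis $\beta_\mu(R)\le \delta_1$ the same way as in the proof of Corollary \ref{stability2}, i.e.\ effectively as $b\beta_\mu(R)\le \delta_1$, so that the scaling in the base case below goes through cleanly (otherwise one first applies Corollary \ref{stability1cubes} to $R$ to upgrade $\beta$ to $b\beta$, at the cost of shrinking $\delta_1$).

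The main claim, proved by strong induction on the depth $j\ge 0$ of a descendant $Q$ of $R$ (so $Q^{(j)}=R$), is that $b\beta_\mu(Q)\le \epsilon_0$. For the base case $0\le j\le N$, the ball $B_Q=B(z_Q,3l(Q))$ sits in a fixed dilate of $B_R$, because $z_Q\in Q\subset R$ and $l(Q)=2^{-j}l(R)\le l(R)$; bounding $\sup_{x\in\Sigma\cap B_Q}\mathrm{dist}(x,L)$ and $\sup_{p\in L\cap B_Q}\mathrm{dist}(p,\Sigma)$ by the corresponding suprema over that dilate of $B_R$ and dividing by the smaller radius $r(B_Q)=2^{-j}r(B_R)$ yields $b\beta_\mu(Q)\lesssim 2^j\, b\beta_\mu(R)\le 2^N\delta_1\le \epsilon_0$. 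For the inductive step $j>N$, the ancestors $Q^{(1)},\dots,Q^{(N)}$ are themselves descendants of $R$ of strictly smaller depth, so the inductive hypothesis gives $\beta_\mu(Q^{(k)})\le b\beta_\mu(Q^{(k)})\le \epsilon_0\le \delta_0/4$ for $1\le k\le N$; since $Q^{(N)}\subset R\subset K$, the hypotheses of Corollary \ref{stability1cubes} are met, and it produces $b\beta_\mu(Q)\le \epsilon_0\le \eta$. This closes the induction and proves the corollary for every descendant $Q$ of $R$.

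The only real obstacle is the bookkeeping in the base case: the passage from $b\beta_\mu(R)$ to $b\beta_\mu(Q)$ loses a factor of at most $2^N$, which is precisely why $\delta_1$ has to depend on $N$, and hence on $\epsilon_0$, $K$, and $d$. Once that loss is absorbed into the choice of $\delta_1$, the inductive step is painless, since Corollary \ref{stability1cubes} was designed exactly to accept the $N$-scale ancestor control that the inductive hypothesis provides. No new analytical input beyond Corollary \ref{stability1cubes} and the dyadic structure of Corollary \ref{Dyadic3} is required.
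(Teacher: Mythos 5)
Your proof is correct and follows exactly the route the paper intends: the paper's own proof is the one-liner ``follows from Corollary \ref{stability1cubes} in the same manner that Corollary \ref{stability2} follows from Theorem \ref{stability1},'' and you have supplied precisely that adaptation — induction on the depth of $Q$ below $R$, with the base case absorbed by shrinking $\delta_1$ by a factor $2^{-N}$ (times a harmless dyadic constant) and the inductive step handed off to Corollary \ref{stability1cubes}. Your observation that the base case really needs $b\beta_\mu(R)$ rather than the one-sided $\beta_\mu(R)$ stated in the hypothesis is a legitimate point: the paper's proof of Corollary \ref{stability2} has the same mismatch, and in both places where these corollaries are invoked (Theorem \ref{stabilitybigunif} and Theorem \ref{mainunif}) the stronger $b\beta$ bound is in fact available, so your reading is the intended one.
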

\begin{proof}
The proof follows from Corollary $\ref{stability1cubes}$ in the same manner that the proof of Corollary $\ref{stability2}$ follows from Theorem $\ref{stability1}$.
\end{proof}
We prove that a uniformly distributed measure is doubling and asymptotically optimally doubling so that we can apply Theorem $\ref{stability1}$ and Corollary $\ref{stability2}$ on one hand, and obtain a dyadic decomposition by using Theorem $\ref{dyadic}$ on the other hand.

\begin{lemma}\label{doubling}
Suppose $\mu$ is a uniformly distributed Radon measure in $\mathbb{R}^d$ with $dim_0\mu=n$, $dim_\infty \mu =p$ and such that: $\mu(B(x,r))=\phi(r)$, for $x \in \Sigma$.
Then $\mu$ is doubling and asymptotically optimally $n$-doubling.
\end{lemma}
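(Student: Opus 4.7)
The plan is to handle the two assertions separately, in both cases exploiting the fact that by the definition of uniform distribution, $\mu(B(x,r))=\phi(r)$ does not depend on the point $x\in\Sigma$, which makes the relevant ratios independent of $x$.

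First I would dispose of the doubling property as an immediate consequence of Theorem \ref{doublingKiP}. Taking $r=2s$ in the Kirchheim--Preiss estimate $\mu(B(x,r))\le 5^d(r/s)^d\phi(s)$ gives $\mu(B(x,2s))\le 10^d\,\phi(s)=10^d\mu(B(x,s))$ for every $x\in\Sigma$ and every $s>0$, so $\mu$ is doubling with constant at most $10^d$.

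For the asymptotically optimally $n$-doubling property, the key point is that the ratio in the definition,
\[
R_t(x,r)=\frac{\mu(B(x,tr))}{\mu(B(x,r))}-t^n=\frac{\phi(tr)}{\phi(r)}-t^n,
\]
does not depend on $x\in\Sigma$. Hence $\sup_{x\in K}|R_t(x,r)|=|\phi(tr)/\phi(r)-t^n|$ and the whole statement reduces to a statement about the distribution function $\phi$ alone. By Theorem \ref{dim} the limit $c_0:=\lim_{r\to 0}\phi(r)/r^n$ exists, and by Corollary \ref{locallyAR} this limit is strictly positive (otherwise $\phi(r)/r^n$ would have to vanish at small scales, contradicting the lower bound $\phi(r)\gtrsim r^n$).

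Knowing that $\phi(r)/r^n\to c_0>0$ as $r\to 0$, I would fix $\varepsilon>0$, choose $r_0>0$ so that $|\phi(r)/r^n-c_0|<\varepsilon$ whenever $r\le r_0$, and then for $r\le r_0$ and $t\in[\tfrac12,1]$ use $tr\le r\le r_0$ to obtain
\[
\frac{\phi(tr)}{\phi(r)}=t^n\cdot\frac{\phi(tr)/(tr)^n}{\phi(r)/r^n}=t^n\cdot\frac{c_0+O(\varepsilon)}{c_0+O(\varepsilon)}=t^n+O(\varepsilon),
\]
uniformly in $t\in[\tfrac12,1]$. Letting $\varepsilon\to 0$ yields $\lim_{r\to 0^+}\sup_{x\in K}|R_t(x,r)|=0$ uniformly in $t\in[\tfrac12,1]$, which is exactly the asymptotically optimally $n$-doubling condition. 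Nothing in this argument is delicate; the only mild point, which I would not really call a main obstacle, is being careful that the convergence is uniform in $t$ on $[\tfrac12,1]$, which follows at once from the fact that both $tr$ and $r$ lie in $(0,r_0]$ where the quantitative estimate $|\phi(\cdot)/(\cdot)^n-c_0|<\varepsilon$ holds.
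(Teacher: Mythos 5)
Your proposal is correct and follows essentially the same route as the paper: doubling from the Kirchheim--Preiss growth estimate with $r=2s$, and asymptotic optimal doubling by observing that $R_t(x,r)$ is independent of $x\in\Sigma$ and then rewriting $\phi(tr)/\phi(r)$ as $t^n\cdot\frac{\phi(tr)/(tr)^n}{\phi(r)/r^n}\to t^n$ via Theorem \ref{dim}. Your added remarks about the positivity of $c_0$ and uniformity in $t$ are correct refinements but do not change the argument.
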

\begin{proof}
We first prove that $\mu$ is doubling. This follows easily from Theorem $\ref{doublingKiP}$. Indeed, if $x \in supp(\mu)$, $r>0$
$$ \mu(B(x,2r)) \leq 10^{d} \mu(B(x,r)).$$

To prove that $\mu$ is $n$-asymptotically optimally doubling, let $K$ be a compact set such that $K \cap \Sigma \neq \emptyset $, and $\tau \in (0,1)$. Choose $x$ in $K \cap \Sigma$.
 Then $$\frac{\mu(B(x,\tau r))}{\mu(B(x,r))} = \frac{\phi(\tau r)}{\phi(r)}$$
 and hence, using Theorem $\ref{dim}$, $$\begin{aligned} \lim_{r \to 0} \sup_{x \in K \cap \Sigma} \frac{\mu(B(x,\tau r))}{\mu(B(x,r))} &= \lim_{r \to 0} \inf_{x \in K \cap \Sigma} \frac{\mu(B(x,\tau r))}{\mu(B(x,r))} \\ &=\lim_{r \to 0} \frac{\phi(\tau r)}{\phi(r)}\\&=\lim_{r \to 0} \frac{\phi(\tau r)}{\tau^n r^n} \frac{r^n}{\phi(r)} \tau^{n}\\&= \tau ^ {n}. \end{aligned}$$ \end{proof}
 
\begin{theorem}\label{stabilitybigunif} Let $\mu$ be a uniformly distributed measure on $\mathbb{R}^d$, $K$ compact set in $\mathbb{R}^d$. Fix $\eta >0$. Then there exists a constant $c >0$, depending on $n$, $d$, $K$ and $\eta$ with the following property:
for every ball $B$ centered in $\Sigma \cap K$, there exists a ball $B'$ centered in $\Sigma \cap K$, $B' \subset B$, and $r(B') \geq c r(B)$ such that for every $k \geq 0$, $$b\beta(2^{-k} B') \leq \eta. $$
\end{theorem}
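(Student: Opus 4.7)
The statement is essentially a clean composition of the two main results of the previous sections. Theorem \ref{bigballs} produces, inside any ball centered in the support, a comparable sub-ball on which the one-sided $\beta$ number is below any prescribed threshold, and Corollary \ref{stability2} promotes such one-sided smallness at a single scale to bilateral smallness at every dyadic sub-scale. The only preparatory input is Lemma \ref{doubling}, which guarantees that a uniformly distributed $\mu$ is doubling and asymptotically optimally $n$-doubling, so that Corollary \ref{stability2} applies to it.

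\textbf{Execution.} Fix $\eta>0$. Since the two previous results are stated on compact sets that must contain the balls being considered, I would first pick a slightly enlarged compact set $\widetilde K \supset K$ which contains every ball $B$ centered in $\Sigma\cap K$ that arises (the radii of such $B$ are bounded in terms of $K$, so $\widetilde K$ can be chosen depending only on $K$). Next, invoke Corollary \ref{stability2} on $\widetilde K$ with parameter $\eta$ to obtain a threshold
\[
\delta_1 = \delta_1(\eta,\widetilde K, n, d) > 0
\]
such that whenever $B_0 \subset \widetilde K$ is centered in $\Sigma$ and $\beta_\mu(B_0)\leq \delta_1$, one has $b\beta_\mu(2^{-k}B_0)\leq \eta$ for all $k\geq 0$. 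Then apply Theorem \ref{bigballs} on $\widetilde K$ with the parameter $\epsilon := \delta_1$ to obtain $\tau = \tau(\delta_1,\widetilde K, n, d)>0$ such that every ball $B$ centered in $\Sigma$ and contained in $\widetilde K$ admits a sub-ball $B' \subset B$, centered in $\Sigma$, with $\beta_\mu(B') \leq \delta_1$ and $r(B') \geq \tau r(B)$.

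\textbf{Conclusion.} Composing the two applications: given any ball $B$ centered in $\Sigma\cap K$, Theorem \ref{bigballs} produces $B'\subset B$ with $r(B') \geq \tau r(B)$ and $\beta_\mu(B') \leq \delta_1$. Since $B' \subset B \subset \widetilde K$, Corollary \ref{stability2} then gives $b\beta_\mu(2^{-k}B') \leq \eta$ for every $k \geq 0$, and setting $c := \tau$ finishes the proof.

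\textbf{Expected difficulty.} There is no real obstacle here: the substantive analysis has already been done in Theorem \ref{bigballs} (existence of large flat sub-balls, via the Riesz-transform Lemmas \ref{smallRiesz}--\ref{inductflat}) and in Corollary \ref{stability2} (stability of flatness via the Kowalski--Preiss connectedness Theorem \ref{flat} and the pseudo-tangent Theorem \ref{pseudounif}). The only bookkeeping is to choose $\widetilde K$ once so that containment hypotheses of the two quoted results are met, and to check that the center of $B'$ indeed lies in $\Sigma \cap K$; both are harmless because $B'\subset B$ and all constants depend only on $n$, $d$, $K$, and $\eta$.
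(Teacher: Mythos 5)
Your composition of Theorem \ref{bigballs} with Corollary \ref{stability2} would be correct if the hypothesis of Corollary \ref{stability2} were really the one-sided $\beta_\mu(B(x_0,r))\leq\delta_1$ as literally printed; but that statement appears to be a typo. The proof of Corollary \ref{stability2} (the base case $0\leq j\leq N$ of its induction) estimates $b\beta_\mu(B_j)\leq \frac{r(B)}{r(B_j)}\,b\beta_\mu(B)\leq 2^N\delta_1$, so it actually needs bilateral smallness $b\beta_\mu(B)\leq\delta_1$ at the top scale, and indeed the paper's own proof of the present theorem invokes the corollary only after establishing ``$b\beta(B')\leq\delta_1$''. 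One-sided $\beta$ small at a single ball does not control $b\beta$ there (the support could lie near an $n$-plane without the plane being close to the support), so the threshold $\epsilon=\delta_1$ you feed into Theorem \ref{bigballs} produces a $B'$ that cannot be fed directly into Corollary \ref{stability2}.

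The missing ingredient is Theorem \ref{stability1}, which converts one-sided control at $N$ consecutive scales into bilateral control at a single (smaller) scale. The paper therefore calls Theorem \ref{bigballs} with the much smaller parameter $\epsilon=2^{-N}\delta_0$ (so that $\beta(2^{-k}B'')\leq\delta_0$ for $0\leq k\leq N$ by the trivial scaling $\beta(2^{-k}B'')\leq 2^k\beta(B'')$), then sets $B'=2^{-N}B''$ so that the hypotheses of Theorem \ref{stability1} are met at scale $B'$, obtains $b\beta(B')\leq\delta_1$, and only then applies Corollary \ref{stability2}. This also explains why the final constant is $c=2^{-N}\tau$ rather than $\tau$: the flat ball must be shrunk by a factor $2^{-N}$ before the bilateral conclusion holds. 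Your proposal skips this upgrade step, which is the actual content of the proof once the two ingredient results are in hand.
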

\begin{proof} 
Let $\delta_1$ be the constant from Corollary $\ref{stability2}$ corresponding to $\eta$, $\delta_1=\delta_1(\eta)$.
Let $\delta_0$ and $N$ be the constants from Theorem $\ref{stability1}$ corresponding to $\delta_1$, and let $B''$ be the ball given by Theorem $\ref{bigballs}$ and $\tau$ the constant given by the same theorem, corresponding to $2^{-N} \delta_0$, i.e. $\tau=\tau(2^{-N} \delta_0)$.

Then we have $B'' \subset B$, $r(B'') \geq \tau r(B) $ and $\beta(B'') \leq 2^{-N} \delta_{0}$. Hence, for $0 \leq k \leq N$, $\beta(2^{-k} B'') \leq \frac{r(B'')}{2^{-k}r(B'')}\beta(B'') \leq \delta_0$. Let $B'=2^{-N} B''$.
We have: $\beta(2^{k}B') \leq \delta_0$, for all $1 \leq k \leq N$.
By Theorem $\ref{stability1}$, this implies that $b\beta(B') \leq \delta_1$, which in turn, by Corollary $\ref{stability2}$, gives $b\beta(2^{-k} B') \leq \eta$ for each $k \geq 0$. Letting $c= 2^{-N} \tau$, we end the proof. Note that $c$ does not depend on our choice of $B$. \end{proof}
Using Theorem $\ref{stabilitybigunif}$ iteratively, we easily obtain the following corollary.

\begin{corollary}\label{protoBPLG}
Let $\mu$ be a uniformly distributed measure on $\mathbb{R}^d$, $K$ compact set in $\mathbb{R}^d$. Fix $\eta >0$. Then there exists $\delta>0$ depending on $\eta$, $\mu$, $n$ and $d$ such that if $B$ is a ball centered in $\Sigma \cap K$, with $\beta_{\mu}(B) < \delta$, then $b\beta_{\mu}(B') \leq \eta$ for any ball $B' \subset \frac{1}{2} B$ centered in $\Sigma $.
\end{corollary}

We can finally prove Theorem $\ref{mainunif}$. The proof is identical to the proof of Theorem $[1.2]$ in $\cite{T}$. We repeat it for the reader's convenience.
\begin{proof}
Let $K$ be compact, $\epsilon>0$. By Theorems $\ref{bigballs}$ and $\ref{protoBPLG}$, there exists $c$ depending on $\epsilon$ and $K$ such that any ball centered in $\Sigma \cap K$ contains a ball $B'$, $r(B') \geq c r(B)$ contained in $B$ such that every ball $B"$ centered in $\Sigma \cap K$ and contained in $\frac{1}{2}B'$ satisfies $b\beta_{\mu}(B") <\epsilon$.
An application of Theorem $[15.2]$ from $\cite{DT}$ gives the result.
\end{proof}

\newpage

\end{document}